\newtheorem{thm}{Theorem}
\newtheorem{lem}[thm]{Lemma}
\newtheorem{cor}[thm]{Corollary}
\theoremstyle{definition}
\newtheorem{defn}[thm]{Definition}
\newtheorem{rem}[thm]{Remark}
\newtheorem{exmp}[thm]{Example}
\DeclareMathOperator{\CR}{CR}
\DeclareMathOperator{\Per}{Per} %Per - set of periodic points
\DeclareMathOperator{\ext}{ext} %the set of extreme points
\DeclareMathOperator{\Emp}{\mathfrak{m}} %\empirical measure
\DeclareMathOperator{\supp}{supp} %support
\DeclareMathOperator{\conv}{conv}
\DeclareMathOperator{\dist}{dist}
\DeclareMathOperator{\D}{\overrightarrow D}
\DeclareMathOperator{\BD}{BD^*}
\DeclareMathOperator{\di}{\D}%{\overrightarrow d}
\DeclareMathOperator{\m}{\mathfrak m}
\newcommand{\seq}[2]{\{#1_{#2}\}_{#2=0}^{\infty}}
\newcommand{\emp}[2]{\mathfrak{m}(\underline{#1},#2)}
\newcommand{\empi}[2]{\mathfrak{m}({#1},#2)}
\newcommand{\s}[1]{\underline{#1}=\{#1_n\}_{n=0}^{\infty}}
\newcommand{\und}[1]{\underline{#1}}
\newcommand{\Banach}[1]{\BD(#1)}
\newcommand{\om}[1]{\hat{\omega}({#1})}
\renewcommand{\phi}{\varphi}
\newcommand{\Dirac}[1]{\hat{\delta}({#1})}
\DeclareMathOperator{\M}{\mathcal{M}}
\DeclareMathOperator{\MT}{\mathcal{M}_T}
\DeclareMathOperator{\MTe}{\M_T^e}
\DeclareMathOperator{\MTp}{\M_T^{\text{co}}}
\DeclareMathOperator{\MTpos}{\M_T^+}
\DeclareMathOperator{\MTmix}{\M_T^{\text{mix}}}
\DeclareMathOperator{\Bl}{\mathcal{B}}
\DeclareMathOperator{\lang}{\Bl}%{\mathcal{L}}
\DeclareMathOperator{\G}{\mathcal{G}}
\DeclareMathOperator{\CsX}{\mathcal{C}^s_X}
\DeclareMathOperator{\CpX}{\mathcal{C}^p_X}
\DeclareMathOperator{\CsY}{\mathcal{C}^s_Y}
\DeclareMathOperator{\CpY}{\mathcal{C}^p_Y}
\DeclareMathOperator{\Orb}{Orb}
\newcommand{\alf}{\mathscr{A}}
\newcommand{\htop}{h_\textrm{top}}
\newcommand{\set}[1]{\left\{#1\right\}}
\newcommand{\eps}{\varepsilon}
\newcommand{\R}{\mathbb{R}}
\newcommand{\Z}{\mathbb{Z}}
\newcommand{\N}{\mathbb{N}}
\newcommand{\Zp}{{\N_0}}%{{\Z_+}}
\author{Dominik Kwietniak \and Martha {\L}{\k{a}}cka \and Piotr Oprocha}
\address[D. Kwietniak]{
Faculty of Mathematics and Computer Science, Jagiellonian University in Krakow, ul. \L o\-jasiewicza 6, 30-348 Krak\'ow, Poland}\email{dominik.kwietniak@uj.edu.pl}
\urladdr{www.im.uj.edu.pl/DominikKwietniak/}
\address[M. {\L}{\c{a}}cka]{
Faculty of Mathematics and Computer Science, Jagiellonian University in Krakow, ul. \L o\-jasiewicza 6, 30-348 Krak\'ow, Poland}\email{martha.ubik@uj.edu.pl}
\address[P. Oprocha]{AGH University of Science and Technology, Faculty of Applied
Mathematics, al.
Mickiewicza 30, 30-059 Krak\'ow, Poland}
\email{oprocha@agh.edu.pl}
\title[A panorama of specification-like properties]{A panorama of specification-like properties and their consequences}
\date{\today}
\begin{document}

\begin{abstract} We offer an overview of the specification property, its relatives and their consequences.
We examine relations between specification-like properties and such notions as: mixing, entropy, the structure of the simplex of invariant measures, and various types of the shadowing property. We pay special attention to these connections in the context of symbolic dynamics.
%We also present these properties in the special case of one dimensional dynamics, in shift spaces or under various types of shadowing property.
%	We concentrate on two aspects: (1)~Which specification-like properties imply intrinsic ergodicity? (2)~Under what conditions ergodic measures are dense among all invariant measures of a given dynamical system?
\end{abstract}
\subjclass[2010]{
37B05 (primary) 37A35, 37B10, 37B40, 37D20 (secondary)}
\keywords{specification property, almost specification property, weak specification property, approximate product property, topological mixing, shadowing, entropy, shift space, Poulsen simplex}
\maketitle

The specification property is the ability to find a single point following $\eps$-close an~arbitrary collection of orbit segments, provided that the tracing point is allowed to spend a fixed (dependent on $\eps$) time between consecutive segments.

Rufus~Bowen introduced the specification property in his seminal paper of 1971 on Axiom A diffeomorphisms \cite{Bowen71}. In recent years this notion and its generalizations served as a basis for many developments in the theory of dynamical systems.

This property is closely related to the study of hyperbolic systems initiated during the 1960's. Around that time Stephen Smale noticed that certain maps arising from forced oscillations and geodesic flows on surfaces of negative curvature had similar geometric and analytic properties. This motivated his definition of what we know today as \emph{uniformly hyperbolic systems}. At the same time, the Russian school (an incomplete list contains such names as Anosov, Sinai, Katok) worked intensively on Anosov systems, that is, diffeomorphisms of manifolds under which the whole manifold is hyperbolic. %In large part, the development of the theory of uniformly hyperbolic systems was inspired by the hope that, loosely speaking, they represent \emph{almost all} dynamical systems on a given manifold. Here,  by \emph{almost all} we mean that the uniform hyperbolicity is a dense and open property in a suitable topology on the space of dynamical systems on our manifold. As it turned out uniformly hyperbolic systems are far too special to describe \emph{almost all} systems in any meaningful sense. This led to the search for more general properties which would still yield describable systems. This search, together with its intermediate accomplishments, represents one of the most active areas of research in dynamics.

Many properties of uniformly hyperbolic systems are consequences of the Specification Theorem \cite[Thm.~18.3.9]{KH}. It states that a diffeomorphism restricted to a~compact locally maximal hyperbolic set has the specification property. This result, together with the closely related Shadowing Theorem \cite[Thm.~18.1.3]{KH} provides tools of great utility in exploring the topological structure and statistical behavior of uniformly hyperbolic systems. There are other important classes of dynamical systems that also have the specification property. Mixing interval maps or, more generally, graph maps, mixing cocyclic shifts (in particular, mixing sofic shifts, and thus shifts of finite type) are among them. Needless to say that this list, although impressive, does not contain all interesting systems. This motivates the search for other properties, call them \emph{specification-like}, which may be used to examine systems without  specification in Bowen's sense.% and are applicable in situations outside the classical setting.

In this survey we describe various notions designed to replace specification. It turns out that there are many systems lacking the specification property, but exhibiting a weaker version of it, which suffices to derive interesting results. This approach has been used to study systems with some forms of non-uniform hyperbolicity, such as $\beta$-shifts.

\begin{figure}[ht]
\begin{center}
\includegraphics[width=\textwidth]{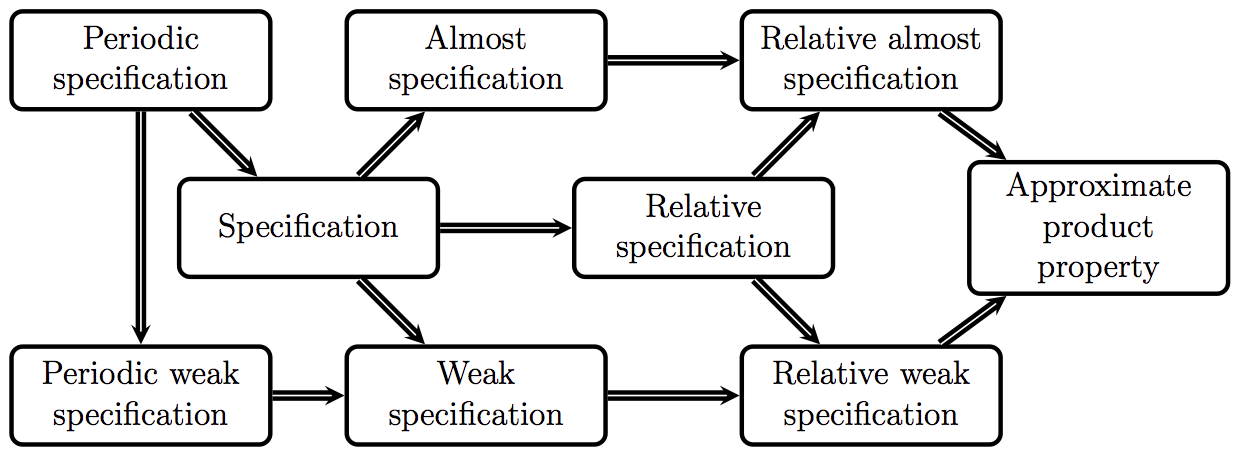}
\end{center}
\caption{The connections between various generalizations of the specification property. There are no more implications between these notions besides those following by transitivity.}
\label{fig:rys}
\end{figure}

The length of this paper does not allow detailed exposition of all aspects of the theory of specification-like properties. We would like to concentrate on the ``big picture'', presenting a broad overview of possible generalizations of the specification property and discussing various examples illustrating dynamical systems with these properties. Figure \ref{fig:rys} presents a diagram summarizing  the specification-like properties we discuss. We also describe examples illustrating the fact that none of the implications presented on Figure \ref{fig:rys} can be reversed. Some of them have never been published before. We would like to  add to this panoramic overview a more detailed (but certainly not complete) account of two problems: intrinsic ergodicity and density of ergodic measures for systems with specification-like properties. Both are related to the structure of the simplex of invariant measures of the dynamical system. %Both are related to the basic applications of the specification property.

The first problem is, broadly speaking, a question about the relation between specification-like properties and entropy, both topological and measure-theoretic. One of the first results obtained using specification was that this property together with expansiveness implies the uniqueness of a measure of maximal entropy. Recall that the Variational Principle states that the \emph{topological entropy} $\htop(T)$ of a~compact dynamical system $(X,T)$ equals the supremum over the set of all measure-theoretic entropies $h_\mu(T)$ where $\mu$ runs through all $T$-invariant Borel probability measures on $X$. An invariant measure which achieves this supremum is called the \emph{measure of maximal entropy} for $(X,T)$. A dynamical system $(X,T)$ is \emph{intrinsically ergodic} if it has a unique measure of maximal entropy. We discuss related results in connection with other specification-like properties.

The second problem is an instance of one of the most basic questions in the theory of dynamical systems: \emph{Given a dynamical system, classify and study the properties of invariant measures.} In case that $(X,T)$ has a specification-like property one can usually prove that the ergodic invariant measures are abundant: they form a dense subset of the simplex of invariant measures endowed with the weak$^\ast$-topology.

Among the subjects omitted here are: the role of specification-like properties in the theory of large deviations, specification for flows (actions of reals), and other group actions (for example $\Z^d$ actions with $d\ge 1$). We are also sure that our catalog of specification-like properties is far from being complete. We have selected only those properties, which have clear connections with Bowen's original notion of specification. There are many properties which are fitted only to apply to some very specific examples and their relation with the core of this theory remains unclear. There is also a theory developed by Climenhaga and Thompson and their co-authors which is close in spirit to those notions included here. It certainly deserves attention, but regretfully we have had to leave the comparison of this theory with the specification-like properties presented here to another occasion.

We did try to make this paper accessible to non-specialists, but in some places we had to assume that the reader has some experience with topological dynamics and ergodic theory (as presented, e.g. in \cite{DGS, KH, Walters}). For every result which already exists in the literature the statement itself includes the reference to the original source. But some results we provide are restatements or compilations for which no single reference is appropriate. In these cases we only include the author's name (if such an attribution is possible) in the statement and cite the relevant origins in the preceding paragraph. To make our presentation complete we also introduce a few original results. They mostly come from the second  named author's Master's Thesis written at the Jagiellonian University in Krak\'{o}w under supervision of the first named author. In particular, the results in Section~\ref{sec:app} (on connections between the almost product property and shadowing) %and \ref{sec:Bes} (a unified and simplified construction of generic points using the asymptotic average shadowing, equivalently shadowing with respect to Besicovitch pseudometric)
namely Theorems \ref{thm:shd:aps} and \ref{thm:40}, Corollaries \ref{cor:shad} and \ref{cor:41} and most Examples have not been published before.
%Notes for the minicourse presented by the first author during constitute a basis for the preliminary version of the survey part of this paper.
%{\color{blue} Acknowledgements? to do zmiany ale na koncu}

\section*{Acknowledgements}
We would like to thank the referee for his/her careful reading and constructive comments.
We are also grateful to Vaughn Climenhaga and Ronnie Pavlov for many discussions on the subject of the paper.

Dominik Kwietniak was supported by the Polish National Science Centre (NCN) grant no. 2013/08/A/ST1/00275; the research of Martha {\L}{\k{a}}cka was supported by the Polish Ministry and Higher Education grant no. {DI2012 002942};  %{0029/DIA/2013/42};
Piotr Oprocha was supported by the Polish Ministry of Science and Higher Education from sources for science in the years 2013-2014, grant no. IP2012 004272.

The idea of writing this paper together was born in June 2014 during the Activity on Dynamics \& Numbers in the Max Planck Institute of Mathematics in Bonn. The hospitality of MPI is gratefully acknowledged. A preliminary version was derived from the set of notes prepared by the first named author for the mini-course \emph{Specification and shadowing properties} presented by him during \emph{A week on Dynamical Systems} at IM-UFRJ in Rio de Janeiro. Using this occasion he would like to thank the organizers for the invitation and warm reception.

\section{Basic definitions and notation}

%{\color{blue}Uzupelnic! }
\subsection{Notation and some conventions} We write $\N=\{1,2,3,\ldots\}$ and $\Zp=\{0,1,2,\ldots\}$. By $|A|$ we mean the cardinality of a finite set $A$. Given any set $A\subset\Zp$ we write
\begin{itemize}
\item $\overline d(A)$ for the \emph{upper asymptotic density} of $A$, that is,
$$\overline d(A)=\limsup\limits_{n\to\infty}\frac{\big|A\cap\{0,\ldots,n-1\}\big|}{n},$$
\item $\BD(A)$ for the \emph{upper Banach density} of $A$, that is,
$$ \BD(A)=\limsup\limits_{n\to\infty}\max\limits_{k\in\Zp}\frac{\big|A\cap\{k,k+1,\ldots, k+n-1\}\big|}{n}.$$
\end{itemize}
We denote the set of all sequences $\s{x}$ with $x_n$ in some (not necessarily finite) set $A$ for $n=0,1,\ldots$ by $A^\infty$. Recall that a subset of a topological space is of  \emph{first category} if it can be written as countable union of closed nowhere dense sets. It is \emph{residual} if it is a countable intersection of open and dense sets. A set is \emph{nontrivial} if it contains at least two elements.

\subsection{Dynamical systems}
%We recall some basic definitions and facts concerning dynamical systems.
Throughout the paper a \emph{dynamical system} means a~pair $(X,T)$ where $X$ is a~compact metric space and  $T\colon X\to X$ is a %\emph{surjective}
continuous\footnote{Sometimes we will consider maps that are only piecewise continuous, but in each such case we will indicate this explicitly.} map. We say that $(X,T)$ is \emph{invertible} if $T$ is a homeomorphism. We denote a metric on $X$ by $\rho$. We will often identify a dynamical system $(X,T)$ with a map $T\colon X\to X$ alone.
%An \emph{orbit} of a point $x\in X$ is the set $\{T^n(x)\colon n\in\Zp\}$. A \emph{trajectory} or \emph{orbit sequence} of $x\in X$ is the sequence $\{T^n(x)\}_{n=0}^\infty$.

We say that $x\in X$  is a \emph{periodic point} for $T$ if $T^k(x)=x$ for some $k\in\N$ and we call $k$ a \emph{period} for $x$. %The least possible period is the \emph{minimal period} of $x$, denoted by $\per(x)$.
We denote the set of all periodic points of $T$ by $\Per(T)$.

\subsection{Choquet theory} A nonempty convex compact and metrizable subset $K$ of a~locally convex topological vector space is a \emph{Choquet simplex} if every point of $K$ is the barycenter of a unique probability measure supported on the set of extreme points of $K$ (see \cite{Phelps01}). A \emph{Poulsen simplex} is a nontrivial Choquet simplex $K_P$ such that its extreme points $\ext (K_P)$ are dense in $K_P$. By \cite{LOS} any two nontrivial metrizable Choquet simplices with dense sets of extreme points are equivalent under an affine homeomorphism. Therefore one can speak about \emph{the Poulsen simplex} $K_P$.
It is also known that $\ext (K_P)$ is arcwise connected.

\subsection{Topological dynamics}
We say that $T$ is \emph{transitive} if for every non-empty open sets $U,V\subset X$ there is $n>0$ such that $U\cap T^{-n}(V)\neq\emptyset$.
A dynamical system $(X,T)$ is \emph{(topologically) weakly mixing} when the product system
$(X\times X,T\times T)$ is topologically transitive. A map $T$ is \emph{(topologically) mixing} if for every non-empty open sets $U,V\subset X$ there is $N\in\N$ such that for all $n\ge N$ we have $U\cap T^{-n}(V)\neq\emptyset$. We say that a set $K\subset X$ is \emph{$T$-invariant} if $T(K)\subset K$. A \emph{subsystem} of $(X,T)$ is a pair $(K,T)$, where $K\subset X$ is a nonempty closed $T$-invariant set. Here and elsewhere, we make no distinction between $T$ and its restriction $T|_K$ to a $T$-invariant set $K$ and we often identify a subsystem $(K,T)$ with the set $K$ alone. We say that nonempty closed and $T$-invariant set $K\subset X$ is a~\emph{minimal set} for $(X,T)$ if $(K,T)$ does not contain any proper nonempty subsystem. Given $x\in X$ we define the \emph{orbit} of $x\in X$ as the set $\Orb_T(x)=\set{x,T(x), T^2(x),\ldots}$ and the \emph{orbit closure} of an $x$ as $\overline{\Orb(x,T)}$. A point $x\in X$ is \emph{minimal} if its orbit closure is a minimal set.

\subsection{Invariant measures} Let $\M(X)$ be the set of all Borel probability measures on $X$ equipped with the weak$^\ast$-topology. It is well known that this is a~compact metrizable space (see \cite[\S 6.1]{Walters}). A metric inducing the weak$^\ast$-topology on $\M(X)$ is given by
\[
\D(\mu,\nu)=\inf \{\eps>0\colon \mu(A)\le \nu (A^\eps) +\eps,\text{ for every Borel set }A\subset X\},
\]
where $\mu,\nu\in \M(X)$ and $A^\eps=\{x\in X\colon \rho(x,A)<\eps\}$ denotes the $\eps$-neighborhood of $A$  (see \cite{Strassen65}). The \emph{support} of a measure $\mu\in\M(X)$, denoted by $\supp\mu$, is the smallest closed set $C\subset X$ such that $\mu(C)=1$. We say that $\mu\in \M(X)$ has \emph{full support} if $\supp\mu=X$.

Let $\MT(X)$ denote the set of all $T$-invariant measures in $\M(X)$. By the Krylov-Bogolyubov theorem any dynamical system admits at least one invariant Borel probability measure. We write $\MTe(X)$ for the subset of all ergodic measures. We say that $T$ is \emph{uniquely ergodic} if there is exactly one $T$-invariant measure.

Recall that $\MT(X)$ is a Choquet simplex (see \cite[\S6.2]{Walters}). In particular, $\MT(X)$ is the closure of the convex hull of $\MTe(X)$, thus  $\MTe(X)$ is a nonempty $G_\delta$-subset of $\MT(X)$. Note that $\MT(X)$ is a compact
metric space, hence a subset of $\MT(X)$ is residual if, and only if, it is a dense $G_\delta$.

Recall that $\mu\in\MT(X)$ is \emph{strongly mixing} if for any Borel sets $A,B\subset X$ we have $\mu(A\cap T^{-n}B)\to \mu(A)\mu(B)$ as $n\to\infty$.
We denote by $\MTmix(X)$ the set of all strongly mixing measures.

Let $\MTpos(B)$ denote the set of all $\mu\in\MT(X)$ such that the Borel set $B\subset X$ is a subset of $\supp\mu$. In particular, $\MTpos(X)$ denote the set of all  measures with full support.

We denote by $\MTp(X)$ the set of all invariant measures supported on the~orbit of some periodic point.

\subsection{Generic points}
Let $\Dirac{x}$ denote the point mass measure (Dirac measure) concentrated on $x$.
For any $x\in X$ and $N\in\N$ let $\Emp(x,N)=\frac 1N\sum_{n=0}^{N-1}\hat\delta(T^n(x))$.
A~measure $\mu\in \MT(X)$ is \emph{generated} by $x\in X$ if $\mu$ is a limit of some subsequence of
$\{\Emp(x,n)\}_{n=1}^\infty$. The set of all invariant measures generated by $x\in X$ is denoted by $\om{x}$. We say that $x$ is a \emph{generic point} for $\mu\in\MT(X)$ if $\mu$ is the unique measure generated by $x$.
It is \emph{quasiregular} for $T$ if there exists $\mu\in\MT(X)$ such that $x$ is generic for $\mu$.

\subsection{Measure center}
An open set $U\subset X$ is \emph{universally null} in a dynamical system $(X,T)$ if $\mu(U)=0$ for every $\mu\in\MT(X)$.
The \emph{measure center} of $(X,T)$ is the complement of the union of all universally null sets, or equivalently, it is the smallest closed subset $C$ of $X$ such that $\mu(C)=1$ for every $\mu\in\MT(X)$. Another characterization of the measure center uses ideas of Birkhoff and Hilmy. Birkhoff introduced the \emph{probability of sojourn}, defined for $x\in X$ and $U\subset X$ as
\[
p(x,U)=\limsup_{N\to\infty}\frac{1}{N} \left|\set{0\le n<N:T^n(x)\in U}\right|.
\]
Hilmy \cite{Hilmy36} defined the \emph{minimal center of attraction} of a point $x\in X$ as
$$I(x)=\set{y\in X : p(x,U)>0 \text{ for any neighborhood } U \text{ of }y}.$$
It can be proved (see \cite{Sigmund77}) that the measure center is the smallest closed set containing the minimal center of attraction of every point $x\in X$. If the minimal points are dense in $X$ then the measure center is the whole space, but without density of minimal points no specification-like property we consider can guarantee that.

\subsection{Entropy} Measure-theoretic and topological entropies are among the most important invariants in topological dynamics and ergodic theory. Recall that given a dynamical system $(X,T)$ and an open cover $\mathcal{U}$ of $X$ we define
\[
\mathcal{U}^n=\{U_0\cap T^{-1}(U_1)\cap \ldots \cap T^{-(n-1)}(U_{n-1}):U_j\in \mathcal{U}\text{ for }j=0,1,\ldots,n-1\}.
\]
The \emph{topological entropy} of $(X,T)$ with respect to $\mathcal{U}$ is given by
\[
\htop(T,\mathcal{U})=\lim_{n\to\infty}\frac{1}{n}\log\mathcal{N}(\mathcal{U}^n),
\]
where $\mathcal{N}(\mathcal{U}^n)$ denotes the smallest possible cardinality of an open cover of $X$ formed by elements of $\mathcal{U}^n$.
We will denote by $\htop(T)$ the \emph{topological entropy} of a~map $T$ defined by
\[
\htop(T)=\sup\{\htop(T,\mathcal{U}): \mathcal{U}\text{ is an open cover of }X\}.
\]
For the proof of the existence of the limit above and basic properties of topological entropy see \cite[chapter VII]{Walters}.

For the definition of the \emph{measure-theoretic entropy} $h_{\mu}(T)$ of a~$T$-invariant measure $\mu$ we refer for instance to \cite[chapter IV]{Walters}.

%\subsection{Shadowing}

%One of fundamental tools of topological dynamics is shadowing, which allows tracing pseudoorbits (approximate orbits, that is, sequences where the next point %is uniformly close to the image of the previous point) with real orbits. We recall that a sequence $\s{x}\in X^\infty$ is a \emph{$\delta$-pseudoorbit} for %$T$ is
%$\rho(T(x_n),x_{n+1})<\delta$ for each $n\in\Zp$. We say that a dynamical system $(X,T)$ has the \emph{shadowing property} if for any $\eps>0$ there is a %$\delta>0$ such that for every $\delta$-pseudo-orbit $\s{x}$ one can find a point $y\in X$ with $\rho(x_n,T^n(y))<\eps$ for all $n\in\Zp$.

\subsection{Non-wandering set} Given a dynamical system $(X,T)$ let $\Omega(T)$ be the \emph{non-wandering set} of $T$, that is, $x\in X$ belongs to $\Omega(T)$ if for every neighborhood $U$ of $x$ there exists $n>0$ with $T^n(U)\cap U\neq\emptyset$. It is well known that $\Omega(T)$ is a closed invariant subset of $X$.

\subsection{Chain recurrence}
A $\delta$-chain (of length $m$) between $x$ and $y$ is any sequence $\set{x_n}_{n=0}^{m}$ such that $x=x_0$, $ y=x_m$, and $\rho(T(x_n),x_{n+1})<\delta$ for $0\le n <m$.
A point $x$ is \emph{chain recurrent} for $T$ if for every $\delta>0$ there is a $\delta$-chain from $x$ to $x$. The set of all chain recurrent points
is denoted $\CR(T)$. Using compactness, we easily obtain that $\CR(T)$ is a closed set and for every $y\in \CR(T)$ there is $x\in \CR(T)$
such that $T(x)=y$, that is $T(\CR(T))=\CR(T)$.

A dynamical system $(X,T)$ is \emph{chain recurrent} if $X=\CR(T)$.
If for every $x,y\in X$ and every $\delta>0$ there exists a $\delta$-chain from $x$ to $y$ then $(X,T)$ is \emph{chain transitive}.
%{\color{blue}potrzebujemy chain mixing?}

%Let $\U$ and $\V$ be open covers of $X$.
%By $N(\U)$ we denote the number of sets in a finite subcover of a $\U$ with smallest cardinality. By $T^{-i}\U$ ($i\in\Zp$) we mean
%the cover $\set{T^{-i}(U): U\in \U}$ and $\U \vee \V=\set{U\cap V : U\in \U, V\in \V}$.
%The \emph{topological entropy}
%$h(T,\U)$ of an open cover $\U$ of $X$ is defined (see \cite{Walters}) as
%$$
%\lim_{n\to \infty}\frac{1}{n}\log N\big(\bigvee_{i=0}^{n-1}T^{-i}\U\big).
%$$
%The \emph{topological entropy} of $T$ is
%\[
%\htop(T)=\sup_{\U:\text{open cover of }X} \htop(T,\U).
%\]
%Let $\MT(X)$ be the space of $T$-invariant Borel probability measures on $X$. We denote the measure-theoretic
%entropy of $\mu\in\MT(X)$ by $h_\mu(T)$ (see \cite{Walters}). The variational principle states that
%\[
%\htop(T)=\sup_{\mu\in\MT(X)} h_\mu(T).
%\]
%A measure $\mu\in\MT(X)$ that attains this supremum is a \emph{measure of maximal entropy}. We say that a system $(X,T)$ is
%\emph{intrinsically ergodic} if it has a unique measure of maximal entropy.
\subsection{Orbit segments and Bowen balls}
Let $a,b\in\Zp$, $a\leq b$. The \emph{orbit segment} of $x\in X$ over $[a,b]$ is the~sequence
\[
T^{[a,b]}(x)=(T^a(x),T^{a+1}(x),\ldots, T^b(x)).
\]
We also write $T^{[a,b)}(x)=T^{[a,b-1]}(x)$.
A \emph{specification} is a family of orbit segments
\[\xi=\{T^{[a_j,b_j]}(x_j)\}_{j=1}^n\] such that  $n\in\N$ and $b_j<a_{j+1}$ for all $1\le j <n$.
The number of orbit segments in a specification is its \emph{rank}.

The \emph{Bowen distance between $x,y\in X$ along a finite set $\Lambda\subset\Zp$} is
\[
\rho^T_\Lambda(x, y) = \max\{\rho(T^j(x), T^j(y)) : j \in\Lambda\}.
\]
By the \emph{Bowen ball (of radius $\eps$, centered at $x\in X$) along $\Lambda$}
we mean the set
\[
B_\Lambda(x, \eps) = \{y \in X : \rho^T_\Lambda(x, y) < \eps\}.
\]
If $\Lambda=\set{0,1,\ldots,n-1}$ then we simply write $B_n(x, \eps):=B_\Lambda(x, \eps)$ and $\rho^T_n(x, y)=\rho^T_\Lambda(x, y)$.

\subsection{Natural extension}
The \textit{inverse limit space} of  a surjective dynamical system is the space  %is the space given by
\[X_T=\{(x_1,x_2,x_3,\ldots)\in X^\infty:T(x_{i+1})=x_i \text { for all }i\in\N\}.\]
We equip $X_T$ with the subspace topology induced by the product topology on $X^\infty$.
The map $T$ is called a \textit{bonding map}. The map $\sigma_T\colon X_T\to X_T$ given by
$$\sigma_T(x_0,x_1,x_2,\ldots)=(T(x_0),T(x_1),T(x_2),\ldots)=(T(x_0),x_0,x_1,\ldots).$$
is called  the \textit{shift homeomorphism} and the invertible dynamical system $(X_T,\sigma_T)$ is a \emph{natural extension} of $(X,T)$.
Note that if $(X,T)$ is invertible then $(X,T)$ and $(X_T,\sigma_T)$ are conjugate. If $T$ is not invertible, then $(X,T)$ is only a factor of $(X_T,\sigma_T)$.

Dynamical systems $(X,T)$ and $(X_T,\sigma_T)$ share many dynamical properties. For example, it is not hard to check
that one of them is transitive, mixing or has a~specification(-like) property if and only if the other has the respective property.
It was proved in \cite{ChenLi} that the same equivalence holds for the shadowing property.
Furthermore,  the invariant measures of $(X,T)$ and $(X_T,\sigma_T)$ can be identified by a natural entropy preserving bijection.
Hence, $\htop(T)=\htop(\sigma_T)$ (see \cite{Ye95} for a~more general statement).

\subsection{Expansiveness}
An (invertible) dynamical system $(X,T)$ is
\emph{positively expansive (expansive)} if there is a constant $c>0$
such that if $x,y\in X$ satisfy
$d(T^n(x),T^n(y))<c$ for all $n\in \Zp$ (all $n\in \Z$), then $x=y$.

Two-sided shift spaces and Axiom A diffeomorphisms are expansive (see \cite{DGS}).
One-sided shift spaces are positively expansive.
%{\color{blue}
If $(X,T)$ is invertible and positively expansive, then $X$ is a finite set (see \cite{Richeson}). %}.
%{\color{red}Nie moge sie dokopac do przykladu c-expansive ale nie positively expansive... bede szukal.}

If a dynamical system is expansive or positively expansive, then its natural extension is expansive, but the converse is not true (see \cite{AH}, Theorem 2.2.32(3)).
%More examples of expansive systems can be found in \cite{AH}.
%On the other hand, if $X=[0,1]$ or $X$ is the unit circle or the 2-sphere then $(X,T)$ is never positively expansive.
%If $X=[0,1]$ then $(X_T,\sigma_T)$ is never expansive . On the other hand there exist topological graphs admitting
%positively expansive dynamical systems (see \cite{AH} for more details).
%===========================

%\begin{itemize}
%\item Entropy of set $\htop(T,A)$, map and measure.
%\item generic point, $\hat{\omega}_T(x)$
%\item expansive, asymptotically h-expansive
%\item uniform separation property
%\end{itemize}
%\subsection{Topological graphs}
%A \emph{topological graph} (or \emph{graph} for short)
%is a~continuum $G$ such that there exists a~one-dimensional simplicial complex $\mathcal K$ with geometric carrier $|\mathcal K|$ homeomorphic to $G$ (see %\cite[p.~10]{Croom78}).

\section{Specification property}

The periodic specification property was introduced by Bowen \cite{Bowen71} as a consequence of topological mixing of an axiom A diffeomorphism. Roughly speaking, the specification property allows to approximate segments of orbits by a single orbit, provided that these segments are sparse enough in time.
Recall that a diffeomorphism $T\colon M\to M$ of a smooth compact manifold satisfies \emph{Smale's Axiom A} if the periodic points of $T$ are dense in the non-wandering set $\Omega(T)$ and the tangent bundle of $M$ restricted to $\Omega(T)$, denoted $T\Omega(M)$, has a continuous splitting $T\Omega(M)=E_s\oplus E_u$ into subspaces invariant under the derivative $DT$ such that the restrictions $DT|E_s$ and $DT^{-1}|E_u$ are contractions. Smale \cite[Theorem~6.2]{Smale67} proved that the non-wandering set of an Axiom A diffeomorphism $T$ is the disjoint union of finitely many \emph{basic sets} which are closed, invariant, and the restriction of $T$ to each of them is topologically transitive.  Furthermore, Bowen proved that if $\Lambda$ is a basic set for $T$, then $\Lambda$ can be decomposed into disjoint closed sets $\Lambda_1,\ldots,\Lambda_m$ such that $T(\Lambda_i)=\Lambda_{(i+1)\bmod m}$ and $T^m|\Lambda_i$ has the periodic specification property and that is how this property entered into mathematics. Some authors call $\Lambda_1,\ldots,\Lambda_m$ \emph{elementary sets}.

%The formal definition is the following:
\begin{defn}Let $\nu\colon \N\to\N$ be any function.
A~family of orbit segments $\xi=\{T^{[a_j,b_j]}(x_j)\}_{j=1}^n$ is a \emph{$\nu$-spaced specification} if
$a_i-b_{i-1} \ge  \nu(b_i-a_i+1) $ for $2\le i \le n$. Given a \emph{constant} $N\in\N$ by an  \emph{$N$-spaced specification} we mean a $\nu$-spaced specification where $\nu$ is the \emph{constant function} $\nu(n)= N$ for all $n\in\N$.
\end{defn}
\begin{defn}
We say that a specification $\xi=\{T^{[a_j,b_j]}(x_j)\}_{j=1}^n$
is $\eps$-traced by $y\in X$ if
\[
\rho(T^k(y),T^k(x_i)) \le \eps \quad\text{for } a_i\le k \le b_i\text{ and }1\le i \le n.
\]
\end{defn}
\begin{defn}\label{def:spec}
We say that $(X,T)$ has the \emph{specification property} if
for any $\eps>0$ there is a constant $N = N(\eps)\in\N$  such that any
$N$-spaced specification $\xi=\{T^{[a_j,b_j]}(x_j)\}_{j=1}^n$ is $\eps$-traced by some $y\in X$.
If additionally, $y$ can be chosen in such a way that $T^{b_n-a_0+N}(y)=y$
then $(X,T)$ has the \emph{periodic specification property}.
\end{defn}
%{\color{red}Czemy  $T^{b_n-a_0+N+1}(y)=y$ a nie  $T^{b_n-a_0+N}(y)=y$?}

Some authors consider a weaker notion, which we propose to name the \emph{(periodic) specification property of order $k$}. A dynamical system has (periodic) specification of order $k\in\N$ if for every $\eps>0$ there is an $N$ such that every specification of rank $k$ is $\eps$-traced by some (periodic) point.
This weaker version of the (periodic) specification property may replace the stronger one in many proofs, but we do not know of any  examples showing that these notions differ. We expect that even if they do, the examples demonstrating this would not be ``natural'', that is, these potential examples would be systems defined for the sole purpose of proving that a specification property of finite order does not imply the specification property. Note that for shift spaces the periodic specification property, the specification property, and the specification property of order $k$, where $k\ge 2$ are equivalent (see Lemma \ref{lem:psp} and Section \ref{sec:symbolic}).

It is not hard to see that every map with the periodic specification property is onto, but this is not the case if the map has only specification.
\begin{exmp}
Let $X=\set{0,1}$ and $T\colon X\ni x\mapsto 0\in X$. Then $(X,T)$ has the specification property, but $T$ is not onto.
\end{exmp}
Every map on a one point space has the periodic specification property. Therefore we henceforth concentrate on dynamical systems $(X,T)$ given by an \textbf{onto map on a nontrivial space}. Note that some authors (see for example \cite{Yamamoto09}) use a slightly different definition of the specification property which implies surjectivity and for onto maps is equivalent to Definition \ref{def:spec}. For the sake of completeness we recall that a dynamical system $(X,T)$ has the specification property as defined in \cite{Yamamoto09} if for any $\eps>0$ there is an integer $M_\eps$ such that for any $k\ge 1$ and $k$ points $x_1,\ldots,x_k\in X$ and for any sequence of integers $0\le a_1\le b_1< a_2\le b_2<\ldots<a_k\le b_k$ with $a_{i}-b_{i-1}\ge M_\eps$ for  $2\le i\le k$, there is an $x\in X$ with $\rho(T^{a_i+j}(x),T^j(x_i))\le \eps$ for $0\le j\le b_i-a_i$ and $0\le i\le k$. Note that periodic specification is called \emph{strong specification} in \cite{Yamamoto09}.

Observe that a dynamical system $(X,T)$ is topologically transitive if and only if for every $x_0,\ldots, x_k\in X$ and $n_0,\ldots, n_k\in\mathbb N_0$ there exist $m_1,\ldots,m_k\in\N$ such that
\[
\bigcap_{j=0}^k T^{-\sum_{i=1}^{j}(n_{i-1}+m_i)}B_{n_j}(x_j,\eps)\neq\emptyset.
\]
Clearly, each $m_j$ depends on all points $x_i$, all $n_i$ and $\eps$. Therefore the specification property can be considered as a uniform version of transitivity, which allows us to pick all $m_j$ equal to a constant depending only on $\eps$.

The following theorem summarizes easy consequences of the (periodic) specification property.
\begin{thm}[cf. \cite{DGS}, Propositions 21.3--4]
\begin{enumerate}
\item If $(X,T)$ has the (periodic) specification property then $(X,T^k)$ has the (periodic) specification property for every $k\geq 1$.
\item If $(X,T)$, $(Y,S)$ have the (periodic) specification property then the product system $(X\times Y, T\times S)$ also has the (periodic) specification property.
\item Every factor of a system with the (periodic) specification property has the (periodic) specification property.
\item Every onto map with the specification property is topologically mixing.
\end{enumerate}
\end{thm}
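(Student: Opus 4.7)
For (1), the plan is to translate a $T^k$-specification into a $T$-specification by stretching time-indices. Given $\eps>0$, I would start from the constant $M=N_T(\eps)$ witnessing specification for $T$ and, after replacing $M$ by $k\lceil M/k\rceil$ if necessary, assume $k\mid M$, then set $N_{T^k}(\eps):=M/k$. Any $N_{T^k}(\eps)$-spaced $T^k$-specification $\{(T^k)^{[a_j,b_j]}(x_j)\}_{j=1}^n$ becomes a $T$-specification $\{T^{[ka_j,kb_j]}(x_j)\}_{j=1}^n$ with $T$-gaps $k(a_i-b_{i-1})\ge M$, so an $\eps$-tracing point $y$ produced by $T$-specification automatically $\eps$-traces the original under $T^k$. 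For the periodic case the $T$-period $kb_n-ka_0+M$ rewrites, thanks to $k\mid M$, as $k(b_n-a_0+M/k)$, giving the required $T^k$-periodicity with constant $N_{T^k}(\eps)$; arranging this divisibility is the only subtlety in (1).

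For (2), equipping $X\times Y$ with the sup metric $\rho_\times((x,y),(x',y'))=\max\{\rho_X(x,x'),\rho_Y(y,y')\}$, I would take a common constant $N(\eps):=\max\{N_T(\eps),N_S(\eps)\}$ and use it as the specification constant for both $T$ and $S$, which is permitted since specification is preserved under enlarging the constant. The coordinate projections of any $N(\eps)$-spaced product specification are $N(\eps)$-spaced in each factor, so applying specification in $X$ and $Y$ separately yields tracing points $x\in X$ and $y\in Y$ whose pair $(x,y)$ $\eps$-traces the product specification. The common constant makes both components share the period $b_n-a_0+N(\eps)$, which settles the periodic case.

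For (3), let $\pi\colon (X,T)\to(Y,S)$ be a factor map and $\eps>0$. By uniform continuity of $\pi$ I would pick $\delta>0$ with $\rho_X(x_1,x_2)<\delta\Rightarrow\rho_Y(\pi(x_1),\pi(x_2))<\eps$ and define $N_S(\eps):=N_T(\delta)$. Any $N_S(\eps)$-spaced $S$-specification $\{S^{[a_j,b_j]}(y_j)\}$ lifts via arbitrary preimages $x_j\in\pi^{-1}(y_j)$ to a $T$-specification with the same spacing; a $\delta$-tracing $z\in X$ projects, via the intertwining identity $\pi\circ T^i=S^i\circ\pi$, to an $\eps$-tracing point $\pi(z)\in Y$, and $T$-periodicity of $z$ transfers to $S$-periodicity of $\pi(z)$ with the same period.

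For (4), given non-empty open sets $U,V\subset X$ I would pick $x\in U$ and $v\in V$, choose $\eps>0$ so small that $B(x,\eps)\subset U$ and $B(v,\eps)\subset V$, and set $N=N(\eps)$. For every $n\ge N$, surjectivity of $T$ (hence of $T^n$) yields $y\in X$ with $T^n(y)=v$, and specification applied to the two-segment family $\{T^{[0,0]}(x),T^{[n,n]}(y)\}$, whose gap equals $n\ge N$, produces $z$ with $\rho(z,x)\le\eps$ and $\rho(T^n(z),v)=\rho(T^n(z),T^n(y))\le\eps$, so $z\in U\cap T^{-n}(V)$. The assumption that $T$ is onto enters precisely here, to realize the target point $v\in V$ as an image $T^n(y)$, which is where the earlier example (the constant map on two points) breaks the implication.
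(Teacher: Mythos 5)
Your proof is correct and follows the standard route; the paper itself gives no proof of this theorem (it defers to \cite{DGS}, Propositions 21.3--4), and your argument for (4) is exactly the one the paper does spell out for the analogous weak-specification statement (two degenerate segments $T^{[0,0]}(x)$ and $T^{[n,n]}(y)$ with $y\in T^{-n}(\{v\})$). The only point you gloss over is that for the \emph{periodic} version, enlarging the tracing constant (as you do in (1) to force $k\mid M$ and in (2) to obtain a common $N$) is not completely automatic, because the prescribed period $b_n-a_0+N$ changes with $N$; this is repaired by first extending the last orbit segment of the given specification by $N'-N$ and then applying periodic specification with the original constant, after which the tracing point has the required period $b_n-a_0+N'$.
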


The following fact is a simple consequence of the definition of expansiveness, but due to its importance we single it out as a separate lemma. It is proved implicitly by many authors, and an explicit statement and proof can be found as a part of Lemma 9 in \cite{KO12}.

\begin{lem}[cf. \cite{KO12}, Lemma 9]\label{lem:psp}
If $(X,T)$ has the specification property and its natural extension is expansive, then $(X,T)$ has the periodic specification property.
\end{lem}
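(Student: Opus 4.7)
The plan is to pass to the natural extension $(X_T,\sigma_T)$ and reduce to the invertible case. Since $(X_T,\sigma_T)$ inherits the specification property from $(X,T)$ (as recalled in the section on natural extensions) and is expansive by hypothesis with some constant $c>0$, and since a periodic point of $\sigma_T$ of period $L$ projects via the first-coordinate map to a periodic point of $T$ of period $L$ for which the $\eps$-tracing of a specification is preserved, it suffices to prove the conclusion under the additional assumption that $T$ itself is invertible and expansive with constant $c$.

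Fix $\eps>0$ and choose $\eps_1>0$ with $2\eps_1<\min(\eps,c)$. Take $N\ge N(\eps_1)$ from the specification property, possibly enlarged to meet the auxiliary continuity requirement specified below. Given any $N$-spaced specification $\xi=\{T^{[a_j,b_j]}(x_j)\}_{j=1}^n$, set $L=b_n-a_1+N$ and form the bi-infinite $L$-periodization
\[
\xi^{*}=\bigl\{T^{[a_j+kL,\,b_j+kL]}(T^{-kL}x_j):1\le j\le n,\ k\in\Z\bigr\};
\]
the consecutive segments of $\xi^{*}$ are separated by gaps of length exactly $N$, so $\xi^{*}$ is again $N$-spaced. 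Applying the specification property to each finite truncation of $\xi^{*}$ produces a family of $\eps_1$-tracers, and passing to a subsequential limit by compactness of $X$ yields a point $y_\infty\in X$ that $\eps_1$-traces every segment of $\xi^{*}$.

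The proposed periodic tracer is $y_\infty$ itself, and the goal is to show $T^L y_\infty=y_\infty$. Since $\xi^{*}$ is invariant under the time shift by $L$, the point $T^L y_\infty$ is also an $\eps_1$-tracer of $\xi^{*}$, so $\rho(T^m y_\infty,T^{m+L}y_\infty)\le 2\eps_1<c$ at every time $m$ lying in a window of $\xi^{*}$. If this $c$-bound were valid at every $m\in\Z$, expansivity would immediately deliver $T^L y_\infty=y_\infty$ and finish the argument.

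The main obstacle is therefore to transfer the bound across the length-$N$ gaps between consecutive windows of $\xi^{*}$. The plan is to use uniform continuity of the iterates $T^j$ with $|j|\le N$ on the compact space $X$: every time $m$ lying in a gap is at distance at most $N$ from some window time $m'$, so writing $T^m=T^{m-m'}\circ T^{m'}$ gives $\rho(T^m y_\infty,T^{m+L}y_\infty)\le\omega_N(2\eps_1)$, where $\omega_N$ is a common modulus of continuity for the finite family $\{T^j:|j|\le N\}$. The auxiliary requirement on $\eps_1$ mentioned above is precisely $\omega_N(2\eps_1)<c$; a valid choice can be arranged because for every fixed $N$ one has $\omega_N(\delta)\to 0$ as $\delta\to 0$, and the argument can be organized iteratively by first selecting $\eps_1$, then $N=N(\eps_1)$, and then verifying the modulus condition (or shrinking $\eps_1$ further otherwise). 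Once the $c$-bound holds for every $m\in\Z$, expansivity of $T$ forces $T^L y_\infty=y_\infty$, so $y_\infty$ is the desired periodic $\eps$-tracer of $\xi$ with period $L$.
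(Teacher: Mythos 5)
Your reduction to the natural extension, the construction of a limit point $y_\infty$ tracing the bi-infinite periodization $\xi^{*}$, and the observation that $T^{L}y_\infty$ traces $\xi^{*}$ whenever $y_\infty$ does (hence $\rho(T^{m}y_\infty,T^{m+L}y_\infty)\le 2\eps_1$ at all window times) are all sound. The argument breaks down at the gap-bridging step, and the failure is not cosmetic. A minor point first: an $N$-spaced specification only satisfies $a_i-b_{i-1}\ge N$, so its gaps can be arbitrarily long and a gap time need not be within $N$ of a window time; this is repairable by first extending each segment $T^{[a_j,b_j]}(x_j)$ to $T^{[a_j,a_{j+1}-N]}(x_j)$ so that every gap has length exactly $N$. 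The irreparable point is the requirement $\omega_N(2\eps_1)<c$ with $N=N(\eps_1)$: shrinking $\eps_1$ forces $N(\eps_1)$ to grow, which worsens $\omega_N$, and the two effects need not balance, so your iterative scheme need not terminate. Already for the full shift on two symbols with $\rho(x,y)=2^{-\min\{|j|\,:\,x_j\ne y_j\}}$ one computes $N(2^{-k})=2k-1$ and $\omega_N(\delta)=2^{N}\delta$, whence $\omega_{N(\eps_1)}(2\eps_1)=2^{k}\to\infty$ as $\eps_1=2^{-k}\to 0$, while the expansive constant is $1/2$; the inequality you need is never satisfiable even in this simplest example, although the conclusion of the lemma is of course true there.

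This is a symptom of a deeper problem: the statement $T^{L}y_\infty=y_\infty$ is simply false for an arbitrary subsequential limit of tracers, so no closeness estimate can rescue the argument. In a mixing shift of finite type the set of $\eps_1$-tracers of $\xi^{*}$ contains, besides periodic points, points whose coordinates at times deep inside the gaps are completely unconstrained, hence uncountably many non-$L$-periodic tracers; nothing in your construction prevents $y_\infty$ from being one of them, and for such a $y_\infty$ the pair $(y_\infty,T^{L}y_\infty)$ genuinely separates to distance greater than $c$ at some gap time. A correct proof must therefore either select the tracing point much more carefully, so that control propagates across the gaps, or produce the periodic point by a different mechanism altogether; this is exactly the content of the argument in \cite{KO12}, Lemma 9, to which the paper defers instead of reproving the statement.
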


Bowen \cite[Proposition 4.3]{Bowen71} proved that any system with the periodic specification property\footnote{Strictly speaking, Bowen assumed that the system is \emph{$C$-dense} (a notion which we do not use in this paper), but his proof applies to systems with the specification property which follows from the $C$-density assumption.} on a nontrivial space has positive topological entropy with respect to any open cover of $X$ by two nondense open sets.
%Topological entropy is a supremum of entropies of a given open cover taken over the family of all open covers of the underlying space. Hence some nontrivial open covers may have zero entropy.
In ergodic theory there is a class of $K$-systems \cite[Definition 4.13]{Walters}, which contains measure preserving transformations whose measure-theoretic entropy is in some sense completely positive, that is, the Kolmogorov-Sinai entropy of every nontrivial partition is positive, equivalently, the measure-theoretic entropy of every nontrivial measure-preserving factor is positive. It is natural to seek for an analog of this notion in topological dynamics. It turns out that the conditions characterizing $K$-systems in ergodic theory are no longer equivalent when translated to the topological setting. This problem was studied by Blanchard \cite{B92} who defined \emph{completely positive entropy} and \emph{uniform positive entropy}. A dynamical system $(X,T)$ has \emph{completely positive entropy} if all nontrivial topological factors of this system have positive topological entropy, and $(X,T)$ has \emph{uniform positive entropy} if $T$ has positive topological entropy with respect to every open cover of $X$ by two sets none of which is dense in $X$. %(see \cite[Definition 7.5]{Walters} for the definition of topological entropy of a dynamical system with respect to a cover).
Blanchard proved that uniform positive entropy implies completely positive entropy (this was also proved earlier by Bowen, see Proposition 4.2 in \cite{Bowen71}), but the converse implication is not true. Moreover, completely positive entropy does not imply any mixing property. Huang and Ye \cite{HY06} introduced the notion of a \emph{topological $K$-system}. Following them we say that $(X,T)$ is a \emph{topological $K$-system} if every finite cover of $X$ by nondense and open sets has positive topological entropy.

 The topological $K$-systems are also known as systems with \emph{uniform positive entropy of all orders}. In this nomenclature Blanchard's \emph{uniform positive entropy} is the \emph{uniform positive entropy of order $2$}. Every minimal topological $K$-system is mixing \cite{HSY05}. Huang and Ye \cite[Theorem 7.4]{HY06} observed that topological $K$-systems have a kind of a very weak specification property.

 Here we only mention an easy part of this connection (cf. \cite{HY06}, Theorem 7.4). It is easy to see that if a surjective system $(X,T)$ has
the specification property, then for any nonempty open sets $U_1,\ldots,U_k\subset X$ there is an $N$
such that for any $n\in\N$ and  $\phi\colon \set{0,\ldots,n}\to \set{1,\ldots,k}$ there is a point $z$
satisfying $T^{iN}(z)\in U_{\phi(i)}$ for $i=0,\ldots,n$. This immediately gives the following (cf. Proposition 21.6 in \cite{DGS} and Proposition 4.3 in \cite{Bowen71}).
\begin{thm}[folklore]
	If a surjective system $(X,T)$ has the specification property, then it is a topological $K$-system.
\end{thm}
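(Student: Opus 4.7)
The plan is to leverage the combinatorial consequence of specification stated immediately before the theorem. Suppose $\mathcal{U} = \{U_1, \ldots, U_k\}$ is a finite open cover of $X$ in which every $U_i$ is nondense; note that $k \geq 2$, since $k = 1$ would force $U_1 = X$. First I would choose, for each $i$, a nonempty open set $W_i \subset X \setminus \overline{U_i}$, so that $W_i \cap U_i = \emptyset$. Applying the cited consequence to $W_1, \ldots, W_k$ produces an integer $N$ such that for every $n \in \mathbb{N}$ and every $\phi : \{0, \ldots, n\} \to \{1, \ldots, k\}$ there exists $z_\phi \in X$ with $T^{iN}(z_\phi) \in W_{\phi(i)}$ for $i = 0, \ldots, n$.

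Next I would set up a double counting. Fix $n$, let $m = nN + 1$, and encode elements of $\mathcal{U}^m$ by tuples $(i_0, \ldots, i_{m-1}) \in \{1, \ldots, k\}^m$. If $z_\phi$ lies in the element encoded by $(i_0, \ldots, i_{m-1})$, then $T^{jN}(z_\phi) \in U_{i_{jN}}$; combined with $T^{jN}(z_\phi) \in W_{\phi(j)}$ and $W_{\phi(j)} \cap U_{\phi(j)} = \emptyset$, this forces $i_{jN} \neq \phi(j)$ for every $j = 0, \ldots, n$. Projecting onto the sampled coordinates by $\pi(i_0, \ldots, i_{m-1}) = (i_0, i_N, \ldots, i_{nN})$, and letting $\mathcal{C}$ be a minimal subcover of $\mathcal{U}^m$, I would conclude that every $\phi$ disagrees coordinatewise with some $\psi \in \pi(\mathcal{C})$. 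Since the set $\{\phi : \phi(j) \neq \psi(j) \text{ for all } j\}$ has cardinality $(k-1)^{n+1}$, this yields
\[
k^{n+1} \leq |\pi(\mathcal{C})| \cdot (k-1)^{n+1} \leq \mathcal{N}(\mathcal{U}^{nN+1}) \cdot (k-1)^{n+1}.
\]

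To finish, I would take logarithms, divide by $m = nN + 1$, and send $n \to \infty$: since the limit defining $\htop(T, \mathcal{U})$ exists, it must equal its value along this subsequence, yielding $\htop(T, \mathcal{U}) \geq N^{-1} \log(k/(k-1)) > 0$. As $\mathcal{U}$ was an arbitrary finite cover by nondense open sets, this establishes that $(X, T)$ is a topological $K$-system. The step I expect to require the most care is the double counting itself: one must index $\mathcal{U}^m$ correctly and exploit the precise ``$\psi(j) \neq \phi(j)$ for all $j$'' condition so that the exponential gap $(k/(k-1))^{n+1}$ emerges — this is what turns the weak uniformity supplied by specification into a genuine positive entropy bound.
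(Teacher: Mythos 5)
Your proof is correct and follows exactly the route the paper intends: the paper states the combinatorial consequence of specification (the existence of $N$ and the points $z$ hitting prescribed open sets along the arithmetic progression $0, N, \ldots, nN$) and then asserts the conclusion follows ``immediately,'' deferring the counting to Bowen's Proposition 4.3 and Proposition 21.6 of \cite{DGS}. Your double-counting argument, giving $\mathcal{N}(\mathcal{U}^{nN+1}) \ge (k/(k-1))^{n+1}$ and hence $\htop(T,\mathcal{U}) \ge N^{-1}\log(k/(k-1)) > 0$, is precisely the standard argument being invoked there, correctly extended from covers by two sets to covers by $k$ nondense open sets.
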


The next result is a consequence for $d=1$ of Theorem B in \cite{EKW} (Eizenberg, Kifer and Weiss stated it for $\mathbb{Z}^d$ actions). Theorem B in \cite{EKW} asserts that if $(X,T)$ is an invertible dynamical system  with the specification property and $\mu$ is a $T$-invariant probability measure such that the function $\MT(X)\ni\nu\mapsto h_\nu(T)\in\R$ is upper semicontinuous at $\mu$, then $\mu$ is the limit in the weak$^\ast$ topology of a sequence of ergodic measures $\mu_n$ such that  the entropy of $\mu$ is the limit of the entropies of the $\mu_n$.
%(in such a~situation we say that ergodic measures are \textit{entropy-dense}).
This is an important point in
obtaining large deviations estimates, which was first emphasized in \cite{FO} (see also \cite{Comman09, Yamamoto09}).
Analysis of the proof of Theorem B in \cite{EKW} yields the following.

\begin{thm}[cf. \cite{EKW}, Theorem B]\label{thm:8}
	Let $(X,T)$ be an invertible dynamical system with the specification property. Then the ergodic measures are \emph{entropy dense}, that is,
	for every measure $\mu\in \MT(X)$, every neighborhood $U$ of $\mu$ in $\MT(X)$ and every $\eps>0$ there is an ergodic measure $\nu\in U$ with
	$h_\nu(T)\in (h_\mu(T)-\eps,h_\mu(T)]$.
\end{thm}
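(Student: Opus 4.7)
The plan is to follow the Eizenberg--Kifer--Weiss strategy \cite{EKW}: write $\mu$ as a finite convex combination of ergodic measures (up to small error in weak$^\ast$ and in entropy), apply Katok's entropy formula to produce large sets of typical separated orbit segments for each component, splice these segments using specification into a single long orbit, and extract an ergodic measure from the resulting subsystem whose topological entropy matches $h_{\tilde\mu}$ on both sides.

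First I would pass from $\mu$ to a finite convex combination. Using the ergodic decomposition $\mu=\int_{\MTe(X)}\rho\,d\tau(\rho)$ and the affinity of $\rho\mapsto h_\rho(T)$, approximate $\tau$ by a finitely supported measure so that
\begin{equation*}
\tilde\mu=\sum_{i=1}^k\alpha_i\mu_i,\qquad\mu_i\in\MTe(X),
\end{equation*}
lies deep in $U$ and $h_{\tilde\mu}=\sum\alpha_i h_{\mu_i}$ is close to $h_\mu$. To ensure the one-sided upper bound $h_\nu\le h_\mu$ in the conclusion, I would shift $h_{\tilde\mu}$ slightly below $h_\mu$ by mixing in a small weight on an ergodic measure of low entropy (pick any invariant measure supported on a minimal subsystem), arranging $h_{\tilde\mu}\in(h_\mu-\eps/3,h_\mu-\eps/6)$.

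Fix small $\delta,\eta>0$. Katok's entropy formula yields, for each $\mu_i$ and all large $n$, an $(n,\delta)$-separated set $E_i^{(n)}$ of $\mu_i$-typical points with
\begin{equation*}
\exp\bigl(n(h_{\mu_i}-\eta)\bigr)\le\bigl|E_i^{(n)}\bigr|\le\exp\bigl(n(h_{\mu_i}+\eta)\bigr),
\end{equation*}
and $\di(\Emp(y,n),\mu_i)<\eta$ for every $y\in E_i^{(n)}$. Let $N=N(\delta/4)$ be the specification constant from Definition \ref{def:spec}. Pick $L$ large, set $n_i=\lfloor\alpha_iL\rfloor$ and $M=L+(k-1)N$, and for each tuple $(y_1,\dots,y_k)\in\prod_i E_i^{(n_i)}$ use specification to produce $z$ that $(\delta/4)$-traces the concatenation of $T^{[a_i,a_i+n_i)}(y_i)$ with gaps of length $N$ between consecutive blocks. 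Distinct tuples yield $(M,\delta/2)$-separated tracing points, each with $\Emp(z,M)$ close to $\tilde\mu$ in $\di$. Concatenating these constructions along $L_j\to\infty$ into a single orbit and taking its closure produces a compact $T$-invariant subsystem $Y$ with $\MT(Y)\subset U$ and
\begin{equation*}
h_{\tilde\mu}-3\eta\;\le\;\htop(T|_Y)\;\le\;h_{\tilde\mu}+3\eta.
\end{equation*}
By the variational principle and the affinity of entropy under ergodic decomposition, some $\nu\in\MTe(Y)\subset U$ satisfies $h_\nu\ge h_{\tilde\mu}-3\eta>h_\mu-\eps$, and for $\eta$ small enough $h_\nu\le\htop(T|_Y)\le h_{\tilde\mu}+3\eta\le h_\mu$.

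The main obstacle is the upper bound on $\htop(T|_Y)$. The lower bound is immediate from the separated-set count, but controlling $\htop(T|_Y)$ from above requires that every long orbit in $Y$ is shadowed by some template produced by concatenation, so that $(M,\delta)$-separated subsets of $Y$ have cardinality at most $|\prod_i E_i^{(n_i)}|\le\exp(M(h_{\tilde\mu}+2\eta))$. This is arranged by building $Y$ as the orbit closure of a single point exhausting all templates up to small Bowen tolerance along $L_j\to\infty$, and it is here that Katok's upper bound on the number of typical orbits is essential; without it one would obtain only two-sided entropy density in the weaker sense of \cite{EKW}, not the one-sided constraint $h_\nu\le h_\mu$ claimed in the statement.
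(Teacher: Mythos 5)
You should first note that the paper contains no self-contained proof of Theorem~\ref{thm:8}: it is obtained by ``analysis of the proof of Theorem B in \cite{EKW}'', so your outline is really being compared with the Eizenberg--Kifer--Weiss argument, whose blueprint (finite ergodic decomposition, Katok-separated sets of typical points, splicing by specification, extracting an ergodic measure from the resulting invariant set) you follow correctly for the lower bound $h_\nu>h_\mu-\eps$ and for the localization $\MT(Y)\subset U$. The genuine gap is exactly in the step you yourself flag as the main obstacle, and the device you propose does not repair it. Counting templates bounds the cardinality of $(M,\delta/2)$-separated subsets of $Y$, hence only the fixed-scale quantity $\htop(T|_Y,\delta/2)$; topological entropy is the supremum of these quantities as the scale tends to $0$, and nothing in the construction controls the finer scales. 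Specification merely delivers \emph{some} tracing point within Bowen distance $\delta/4$ of each template; inside that tolerance the orbit of your single point, and a fortiori its closure $Y$, can accumulate arbitrary extra complexity (for instance, if $X$ contains a positive-entropy subsystem of diameter less than $\delta$, an unlucky choice of tracing points can force $Y$ to contain it). So the inequality $\htop(T|_Y)\le h_{\tilde\mu}+3\eta$ is unjustified, and with it the whole route ``bound $\htop(T|_Y)$ from above, then apply the variational principle''. Absent expansiveness one must estimate $h_\nu$ itself rather than $\htop(T|_Y)$; this is precisely where \cite{EKW} work harder (and where their upper semicontinuity hypothesis enters in the original Theorem~B), and it is the part that a correct reconstruction cannot leave at the level of a one-scale counting argument.

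A secondary problem is your device for pushing $h_{\tilde\mu}$ strictly below $h_\mu$. Mixing in ``any invariant measure supported on a minimal subsystem'' presumes such a measure has small entropy, but minimal systems can be uniquely ergodic with large entropy, and the amount of mass $t$ you must transfer to lower the entropy by a definite $\eps/6$ is bounded away from $0$, which may push $\tilde\mu$ out of a small neighborhood $U$; moreover the device is vacuous when $h_\mu=0$ or when every ergodic component of $\mu$ has entropy exactly $h_\mu$ (note that when $\mu$ is itself ergodic one should simply take $\nu=\mu$). The standard repair makes this step unnecessary: choose the finite convex combination $\tilde\mu=\sum\alpha_i\mu_i$ with $\sum\alpha_i h_{\mu_i}\le h_\mu$ (pick in each cell of the discretized ergodic decomposition a representative whose entropy does not exceed the cell average), and truncate each separated set $E_i^{(n_i)}$ to cardinality $\lceil\exp(n_i(h_{\mu_i}-\eta))\rceil$, which Katok's formula permits; then the template count, and hence the intended upper bound on $h_\nu$, lands strictly below $h_\mu$ while the lower bound still exceeds $h_\mu-\eps$. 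With these corrections, and with a genuine all-scale (or direct, measure-theoretic) upper estimate for $h_\nu$ in place of the fixed-scale count, your argument becomes the EKW proof.
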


Let $\Per_n(T)$ denote the set of fixed points of $T^n$, where $n\in\N$.
Observe that if $(X,T)$ is expansive, then for every $n$ the set $\Per_n(T)$ is finite, and is nonempty for all $n$ large enough provided that $(X,T)$ has the periodic specification property.

Bowen \cite{Bowen71} proved that if $T$ is expansive and has  periodic specification, then the topological entropy of $T$ equals the exponential growth rate of the number
of fixed points of $T^n$.
\begin{thm}[cf. \cite{Bowen71}, Theorem 4.5]
If $(X,T)$ is an invertible expansive dynamical system with the periodic specification property then
\[
\htop(T)=\lim_{n\to \infty}\frac{1}{n}\log |\Per_n(T)|.
\]
\end{thm}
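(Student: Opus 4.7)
The plan is to establish the two one-sided inequalities
\[
\limsup_{n\to\infty}\frac{1}{n}\log|\Per_n(T)|\le \htop(T)\le \liminf_{n\to\infty}\frac{1}{n}\log|\Per_n(T)|
\]
separately; together they yield existence of the limit and the stated equality.

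For the upper bound I would rely on expansiveness alone. Let $c>0$ be an expansivity constant. If $x,y\in\Per_n(T)$ are distinct, expansiveness produces some $k\in\Z$ with $\rho(T^k(x),T^k(y))\ge c$, and by $T^n$-periodicity of $x,y$ this $k$ may be chosen in $\{0,\ldots,n-1\}$. Hence $\Per_n(T)$ is $(n,c)$-separated, so $|\Per_n(T)|\le s_n(c)$, where $s_n(\eps)$ denotes the maximum cardinality of an $(n,\eps)$-separated set. For an expansive system $\lim_{n\to\infty}\frac{1}{n}\log s_n(\eps)$ stabilizes at $\htop(T)$ for every $\eps$ small enough, which yields the upper bound.

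For the lower bound I would invoke periodic specification. Fix $\eps\in(0,c/3)$, let $N=N(\eps)$ be the constant from periodic specification, and for each $n\ge 1$ pick a maximum $(n,3\eps)$-separated set $E_n$. For every $x\in E_n$ the rank-$1$ family $\{T^{[0,n-1]}(x)\}$ is trivially $N$-spaced, so periodic specification produces $p_x\in X$ with $T^{n-1+N}(p_x)=p_x$, hence $p_x\in\Per_{n-1+N}(T)$, and $\rho(T^k(p_x),T^k(x))\le \eps$ for $0\le k\le n-1$. The assignment $x\mapsto p_x$ is injective: if $p_x=p_y$, then the triangle inequality gives $\rho^T_n(x,y)\le 2\eps<3\eps$, contradicting the $(n,3\eps)$-separation of $E_n$. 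Consequently $|\Per_{n-1+N}(T)|\ge s_n(3\eps)$, and since $N$ is a fixed constant this gives
\[
\liminf_{m\to\infty}\frac{1}{m}\log|\Per_m(T)|\ge\lim_{n\to\infty}\frac{1}{n}\log s_n(3\eps)=\htop(T),
\]
the last equality using expansiveness once more (with $3\eps<c$).

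The essential use of the hypotheses is thus cleanly split: expansiveness pins down entropy at a fixed scale, both to convert separated-set growth into $\htop(T)$ and to bound $|\Per_n(T)|$ by $s_n(c)$, while the \emph{periodic} refinement of specification is indispensable in the lower bound — ordinary specification would only trace orbit segments by recurrent points, not periodic ones, and would give no direct bound on $|\Per_n(T)|$ from below. The only technical nuisance, and the step where I would be most careful, is bookkeeping the exact period of the tracer produced by periodic specification (here $n-1+N$ rather than exactly $n$); this additive constant is harmless after dividing by $n$ but needs explicit justification.
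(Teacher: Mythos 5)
Your argument is correct and is essentially the classical proof of Bowen's Theorem 4.5, which the survey cites without reproducing: expansiveness makes $\Per_n(T)$ an $(n,c)$-separated set for the upper bound, and periodic specification injects a maximal separated set into $\Per_{n-1+N}(T)$ for the lower bound. The only caveat is that the threshold below which $\lim_n \frac{1}{n}\log s_n(\eps)$ is guaranteed to equal $\htop(T)$ depends on one's formulation of the separated/spanning comparison for expansive systems, so it is safer to say ``$3\eps$ below that threshold'' than literally ``$3\eps<c$''; this does not affect the structure of the proof.
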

Every expansive dynamical system has a measure of \textit{maximal entropy}, since expansiveness implies that the
function $\MT(X)\ni\mu\mapsto h_\mu(T)\in [0,\infty)$ is upper semicontinuous and every such function on a compact metric space is bounded from the above and attains its supremum. % $\mu$, i.e.$\htop(T)=h_\mu(T)$.
It turns out that for a system with the periodic specification property the entropy maximizing measure is unique and can be described more precisely.

For each $n\in \N$ such that $\Per_n(T)$ is nonempty denote by $\mu_n$ the probability measure uniformly distributed on $\Per_n(T)$, %(the set of fixed points of $T^n$),
that is,
\begin{equation}
\mu_n=\frac{1}{|\Per_n(T)|}\sum_{x\in \Per_n(T)}\hat\delta(x).\label{mun}
\end{equation}
Clearly, each $\mu_n$ is an invariant measure. %whenever $\Per_n(T)$ is nonempty.
By the above observation, if $(X,T)$ is expansive and has the periodic specification property, then we can consider an infinite sequence formed by $\mu_n$'s.
The proof of following result may be found in \cite{DGS}. It closely follows
Bowen's proofs in \cite{Bowen71} and \cite{Bowen74}.

\begin{thm}[\cite{DGS}, Theorem 22.7]\label{thm:10}
If $(X,T)$ is an invertible expansive dynamical system with the periodic specification property, then the sequence $\mu_n$ defined by \eqref{mun} converges to a fully supported ergodic measure $\mu_B\in\MT(X)$, which is the unique measure of maximal entropy of $T$. In particular, $(X,T)$ is intrinsically ergodic.
\end{thm}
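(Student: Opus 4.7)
The plan follows Bowen's original arguments \cite{Bowen71, Bowen74} and splits into four parts. \emph{Existence.} Since $T$ is expansive, the measure-theoretic entropy map $\mu \mapsto h_\mu(T)$ is upper semicontinuous on the compact simplex $\MT(X)$, so by the variational principle the supremum $h := \htop(T)$ is attained; any ergodic component of a maximiser is itself maximal, so we can find an ergodic measure of maximal entropy. \emph{Sharp counting.} Expansiveness with constant $c$ forces any two distinct elements of $\Per_n(T)$ to satisfy $\rho^T_n(x,y)\ge c$, i.e.\ to be $(n,c)$-separated, while a standard $(n,c)$-spanning set argument bounds their number above. Combined with the previous theorem this upgrades the latter to the two-sided estimate $c_1 e^{nh} \le |\Per_n(T)| \le c_2 e^{nh}$ for all sufficiently large $n$.

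The crux is \emph{uniqueness.} Suppose for contradiction that two distinct ergodic measures $\mu_1,\mu_2$ both attain entropy $h$. Pick $f \in C(X)$ separating them, say with $\int f\,d\mu_1 - \int f\,d\mu_2 = 3\eta > 0$; fix $\eps \in (0,c)$ and $\alpha \in (0,h)$ small. By Katok's entropy formula combined with Birkhoff's theorem, for each $i\in\{1,2\}$ and all large $n$ one finds an $(n,\eps)$-separated set $E^n_i$ of cardinality $\ge e^{n(h-\alpha)}$ consisting of points whose length-$n$ Birkhoff averages of $f$ lie within $\eta$ of $\int f\,d\mu_i$. Applying periodic specification with gap $N=N(\eps/2)$, to every itinerary $(i_1,\ldots,i_k)\in\{1,2\}^k$ and every choice $x_j \in E^n_{i_j}$ we associate a periodic point of period $k(n+N)$ that $(\eps/2)$-traces the corresponding specification. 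Expansiveness together with the $f$-distinguishability of $E^n_1$ and $E^n_2$ ensures distinct choices yield distinct periodic orbits, so $|\Per_{k(n+N)}(T)| \ge 2^k e^{kn(h-\alpha)}$. Choosing $n$ large relative to $N$ and $\alpha$ small enough (amplifying with longer, finer-grained sub-specifications inside each block when $N$ is so large that $\log 2 \le Nh$) makes the resulting exponential growth rate exceed $(n+N)h$, contradicting the upper bound from the counting step.

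With uniqueness established we identify the limit. An expansiveness-based covering estimate gives $|\Per_n(T)\cap B_n(x,\eps)| \le K(\eps)$, whence the uniform Gibbs-type lower bound $\mu_n(B_n(x,\eps)) \ge c_1 K(\eps)^{-1} e^{-nh}$ for $x\in \supp \mu_n$. Any weak$^\ast$-accumulation point $\mu^\ast$ of $\{\mu_n\}$ inherits this bound on closed Bowen balls, and a Brin--Katok style computation then forces $h_{\mu^\ast}(T) \ge h$; by uniqueness $\mu^\ast = \mu_B$, so the whole sequence converges. Ergodicity follows because the set of measures of maximal entropy is a face of $\MT(X)$ and a singleton face is automatically extremal. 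Finally, for full support, given any non-empty open $U \subseteq X$ one uses periodic specification to glue any specification onto a periodic passage through $U$, producing $\asymp e^{nh}$ periodic points of period $n$ that visit $U$; combined with the sharp counting this yields $\mu_n(U)$ bounded below uniformly and hence $\mu_B(U) > 0$.

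The main obstacle is the uniqueness argument: calibrating the combinatorial bonus per block against the entropy cost $Nh$ of the specification gap so that the resulting growth rate of $|\Per_{k(n+N)}(T)|$ strictly exceeds $(n+N)h$. Everything else is bookkeeping on top of the previous theorem, the upper semicontinuity of entropy, and the standard Brin--Katok and Birkhoff theorems.
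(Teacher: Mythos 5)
Your existence step, the two-sided estimate $c_1e^{nh}\le|\Per_n(T)|\le c_2e^{nh}$, and the closing remarks on ergodicity (the set of measures of maximal entropy is a face, and a singleton face consists of an extreme point) are all fine and agree with the route taken in \cite{DGS}, Theorem 22.7, which the paper cites in lieu of a proof. The problem is the uniqueness step, which is the heart of the theorem, and there your argument does not close. Count what you actually get: each of the $k$ blocks contributes a factor of at most $|E^n_1|+|E^n_2|\le 2e^{nh}$ choices, so your family of periodic points has cardinality at most $2^k e^{knh}$, while the upper bound you are trying to violate is $c_2e^{k(n+N)h}$. The required inequality reduces to $\log 2 > n\alpha + Nh$, and since $N=N(\eps)\ge 1$ is a fixed constant determined by $\eps$ and $h>0$, the term $Nh$ alone already defeats $\log 2$ in general (e.g.\ any mixing SFT with $h\ge\log 2$). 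The parenthetical fix by ``nesting finer sub-specifications'' cannot repair this: every gap you insert costs $Nh$ in the exponent and every binary choice gains only $\log 2$, so the per-block ratio $2e^{-Nh}$ never improves. More fundamentally, two distinct ergodic measures of entropy $h$ do \emph{not} force extra exponential orbit growth --- the union $E^n_1\cup E^n_2$ is still an $(n,\eps)$-separated set of size $O(e^{nh})$, perfectly consistent with topological entropy $h$. A sanity check from this very paper: shift spaces with the \emph{weak} specification property (gap $o(n)$) can have several measures of maximal entropy with disjoint supports (\cite{KOR}, \cite{Pavlov14}), and your counting argument uses the constancy of $N$ only through the quantity $Nh$ per gap, so it would ``prove'' uniqueness in that setting too. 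Any correct proof must exploit the constant gap in a way that degenerates when the gap grows.

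That essential ingredient is the two-sided Gibbs estimate for the limit measure, which is how Bowen and \cite{DGS} actually argue: using periodic specification one shows that for every $x\in X$ and $n$ there are at least $c\,e^{mh}$ periodic points of period $n+m+2N$ inside $B_n(x,\eps)$ (shadow $T^{[0,n)}(x)$ followed by each member of a maximal $(m,\eps)$-separated set), which combined with $|\Per_j(T)|\le c_2e^{jh}$ gives $\mu_j(B_n(x,\eps))\ge a e^{-nh}$ with $a$ depending on $N$ but \emph{uniform in $x$ and $n$}; the matching upper bound comes from expansiveness. Any weak$^\ast$ limit $\mu_B$ inherits these bounds, and one then shows that every ergodic $\nu$ singular to $\mu_B$ has $h_\nu(T)<h$, which yields uniqueness, convergence of the full sequence, full support, and ergodicity all at once. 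Note also that your stated derivation of the lower Gibbs bound is inverted: from the upper bound $|\Per_n(T)\cap B_n(x,\eps)|\le K(\eps)$ on the numerator you cannot conclude a lower bound on $\mu_n(B_n(x,\eps))$; you need specification to bound that intersection from \emph{below}. I would rewrite the uniqueness and convergence steps around the uniform Gibbs property rather than the direct periodic-point count.
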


In Theorems~\ref{thm:8}--\ref{thm:10} one can replace invertible by surjective and expansiveness by positive expansiveness or expansiveness of the natural extension.

It is known that the set of fully supported measure is either empty or residual in $\MT(X)$, e.g. see \cite[Proposition~21.11]{DGS}.
%If periodic points for $T$ are dense in $X$, then it is relatively easy to show using the Baire theorem that the set of fully supported measures is a residual subset of $\MT(X)$. In %particular, Sigmund noted that it is the case when $(X,T)$ has the periodic specification property.
It is easy to see that if minimal points are dense in $X$ then the set of fully supported measures is nonempty, hence
fully supported measures are dense in $\MT(X)$.
 It follows that the specification property has a strong influence not only on the topological entropy but also on the space of invariant measures. Sigmund studied relations between the specification property and the structure of $\MT(X)$ in \cite{Sigmund70,Sigmund74}. Parthasarathy \cite{Parthasarathy61} proved similar results for a dynamical system $(Y,T)$ where $Y=X^\infty$ is a product of countably many copies of a complete separable metric space $X$ and $T$ is the shift transformation.
Sigmund's results may be summarized as follows:
\begin{thm}[Sigmund] \label{thm:sigmund} If $(X,T)$ has the periodic specification property, then:
	\begin{enumerate}
		\item \label{Sig:1} The set $\MTp(X)$ is dense, hence $\MTe(X)$ is arcwise connected and residual %subset of
		in $\MT(X)$, hence  $\MT(X)$ is the Poulsen simplex.
		\item \label{Sig:5} The set $\MTe(X)\cap\MTpos(X)$  is residual in $\MT(X)$.
		\item \label{Sig:6} The set $\MTmix(X)$ is of first category in $\MT(X)$.
%		\item \label{Sig:7} The map $T$ is top. mixing and $\Per(T)$ is dense in $X$.
		\item \label{Sig:9} The set of all non-atomic measures is residual in $\MT(X)$.
		\item \label{Sig:10} For every non-empty continuum $V\subset \MT(X)$ the set $\{x\in X : \hat{\omega}_T(x)=V\}$ is dense in $X$. In particular, every invariant measure has a generic point.
		\item \label{Sig:11} The set $\{x\in X : \hat{\omega}_T(x)=\MT(X)\}$ is residual in $X$.
		\item \label{Sig:12} The set of quasiregular points is of first category.
		\item \label{Sig:13} For every $l\in \N$ the set $\bigcup_{p=l}^\infty P(p)$
		is dense in $\MT(X)$, where $P(p)$ denotes the set of all invariant probability Borel measures supported on periodic points of period $p$.
		\item The set of strongly mixing measures is of first category in $\MT(X)$.
	\end{enumerate}
\end{thm}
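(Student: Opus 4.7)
The plan is to derive all nine conclusions from a single master density statement — essentially item (\ref{Sig:13}) — and then grind through Baire-category consequences. First I would establish (\ref{Sig:13}). Given $\mu\in\MT(X)$, a weak$^{\ast}$-neighborhood $U$ of $\mu$, and $l\in\N$, the ergodic decomposition reduces matters to ergodic $\mu$. A $\mu$-generic point $x$ (Birkhoff) and a large $n\ge l$ give $\Emp(x,n)\in U$ with some slack. Choose $\delta>0$ so small that any $\delta$-perturbation of the orbit segment $T^{[0,n-1]}(x)$ still has its empirical measure in $U$. Apply the periodic specification property to the single-segment specification $\{T^{[0,n-1]}(x)\}$ with spacing $N=N(\delta)$ to obtain a periodic point $y$ of period $p=n+N$ that $\delta$-traces $x$ on $[0,n-1]$. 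The periodic-orbit measure $\mu_y=\Emp(y,p)$ differs from $\Emp(x,n)$ by at most the tracing error plus $N/p$ in total variation, so for $n\gg N$ it lies in $U\cap P(p)$ with $p\ge l$. Specializing $l=1$ yields the density assertion of (\ref{Sig:1}).

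From this foundation, (\ref{Sig:1}), (\ref{Sig:5}), (\ref{Sig:9}) are formal consequences. Every periodic-orbit measure is ergodic, so $\MTp(X)\subseteq\MTe(X)$ and density of $\MTp(X)$ gives density of $\MTe(X)$; since the set of extreme points of a metrizable Choquet simplex is always a $G_{\delta}$, $\MTe(X)$ is residual, and dense extreme points identify $\MT(X)$ with the Poulsen simplex, whence arcwise connectedness of $\MTe(X)$ follows from the fact cited in the introduction. For (\ref{Sig:5}), write $\MTpos(X)=\bigcap_k\{\mu:\mu(U_k)>0\}$ for a countable basis $\{U_k\}$ to see it is $G_\delta$; the periodic specification property combined with topological mixing (itself a consequence of PSP) produces a periodic orbit visiting every $U_k$, giving a fully supported $\mu_0\in\MTp(X)$, and convex combinations $(1-t)\nu+t\mu_0$ prove density, so the intersection with residual $\MTe(X)$ remains residual. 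For (\ref{Sig:9}), each $A_k=\{\mu:\max_{x\in X}\mu(\{x\})\ge 1/k\}$ is closed by upper semicontinuity of the largest atom and avoids every periodic-orbit measure of period exceeding $k$; by (\ref{Sig:13}) it is nowhere dense, and the non-atomic measures form the residual $\MT(X)\setminus\bigcup_k A_k$.

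The dynamic items (\ref{Sig:10})--(\ref{Sig:12}) are proved by concatenation. Given a nonempty continuum $V\subset\MT(X)$, I would choose a sequence $(\nu_j)\subset V$ that is dense in $V$, has each element appearing infinitely often, and satisfies $\D(\nu_j,\nu_{j+1})\to 0$ (possible because $V$ is compact and connected). Approximate $\nu_j$ by $\Emp(y_j,n_j)$ with $n_j$ growing very fast, and splice the segments $T^{[0,n_j-1]}(y_j)$ via PSP with gaps $N_j=N(\delta_j)$ and tracing errors $\delta_j\to 0$ into the trajectory of a single $z\in X$ (the initial block may be chosen to place $z$ near any prescribed $x\in X$, yielding density in $X$). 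If $\sum_{i<j}(n_i+N_i)=o(n_j)$, times at block-ends force $\om{z}\supset V$; the drift $\D(\nu_j,\nu_{j+1})\to 0$, $N_j/n_j\to 0$, and the elementary observation that one step changes $\Emp(z,k)$ by at most $2/k$ force $\om{z}\subset V$. Item (\ref{Sig:11}) is the case $V=\MT(X)$ and upgrades to residuality because, for each $\nu$ in a countable dense subset of $\MT(X)$, the set $\{z:\nu\in\om{z}\}$ is a dense $G_\delta$. Item (\ref{Sig:12}) is then immediate: quasiregular points satisfy $|\om{z}|=1$, so they are excluded from the residual set of (\ref{Sig:11}).

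The last assertion — and (\ref{Sig:6}) similarly — should follow because periodic-orbit measures are dense and never weakly (let alone strongly) mixing: for $f,g$ in a countable dense subset of $C(X)$ and rational $\epsilon>0$ the set
\[
N_{f,g,\epsilon}=\bigcap_{M\in\N}\bigcup_{n\ge M}\set{\mu\in\MT(X):\left|\int f\cdot(g\circ T^n)\,d\mu-\int f\,d\mu\int g\,d\mu\right|>\epsilon}
\]
is a $G_\delta$, and density of periodic-orbit measures (item (\ref{Sig:13})) combined with an appropriate choice of $f,g,\epsilon$ witnessing non-mixing on a representative periodic orbit makes it dense, hence residual. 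Since $\MTmix(X)$ lies in the complement of $N_{f,g,\epsilon}$, which is meager, the last conclusion follows; (\ref{Sig:6}) is the same argument with weak-mixing witnesses. The main technical obstacle is the concatenation in (\ref{Sig:10}): coordinating the gaps $N_j$, lengths $n_j$, and tracing errors $\delta_j$ so that the empirical measures of $z$ accumulate on \emph{precisely} $V$ — neither less nor more — is the step where the full power of the periodic specification property is used and where the connectedness of $V$ enters decisively through the drift condition $\D(\nu_j,\nu_{j+1})\to 0$.
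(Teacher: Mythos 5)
The paper offers no proof of Theorem \ref{thm:sigmund}: it is stated as a compilation of Sigmund's results with pointers to \cite{Sigmund70,Sigmund74} and \cite{DGS}, so your proposal has to stand on its own. Its architecture (a master density statement for periodic measures, then Baire-category consequences, then the concatenation argument for items (\ref{Sig:10})--(\ref{Sig:12})) is the right one and is essentially Sigmund's, and the atom estimate for (\ref{Sig:9}) and the splicing scheme for (\ref{Sig:10})--(\ref{Sig:12}) are sound in outline. But the foundation is broken as written. You assert that ``the ergodic decomposition reduces matters to ergodic $\mu$'' and then only ever trace a \emph{single} orbit segment. That reduction is invalid: showing that $\MTp(X)$ accumulates on every ergodic measure only gives $\overline{\MTp(X)}\supset\MTe(X)$, hence that the closed \emph{convex hull} of $\MTp(X)$ is all of $\MT(X)$; since $\MTp(X)$ is far from convex, this does not yield density of $\MTp(X)$ itself. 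The essential use of specification here is to trace a rank-$k$ specification built from generic points $x_1,\dots,x_k$ of ergodic measures $\mu_1,\dots,\mu_k$, with segment lengths proportional to the weights $\lambda_i$, by one periodic point, so that a single periodic-orbit measure approximates $\sum_i\lambda_i\mu_i$. Without that multi-segment step, items (\ref{Sig:1}) and (\ref{Sig:13}) --- and everything you derive from them --- are unproved.

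Two of the derived arguments also fail. In (\ref{Sig:5}) you produce ``a fully supported $\mu_0\in\MTp(X)$'' from a periodic orbit visiting every basic open set; on an infinite space this is impossible, since every element of $\MTp(X)$ has finite support. (The repair: take $\mu_0=\sum_k 2^{-k}\mu_k$ for periodic-orbit measures $\mu_k$ whose supports union to a dense set --- density of periodic points does follow from tracing rank-one specifications --- giving an invariant, fully supported, but non-periodic $\mu_0$.) For the final item (and (\ref{Sig:6}), which by the paper's definition of $\MTmix(X)$ is the same assertion), no \emph{single} set $N_{f,g,\eps}$ can be dense whenever $T$ has a fixed point $x_0$ (e.g.\ the full shift): if $\D(\Dirac{x_0},\mu)<\eta$ then $\mu(B(x_0,\eta))\ge 1-\eta$, and by invariance $\mu\bigl(B(x_0,\eta)\cap T^{-n}B(x_0,\eta)\bigr)\ge 1-2\eta$ for every $n$, so both $\int f\cdot(g\circ T^n)\,d\mu$ and $\int f\,d\mu\int g\,d\mu$ lie within $\eps/2$ of $f(x_0)g(x_0)$ uniformly in $n$ once $\eta$ is small; that whole neighborhood of $\Dirac{x_0}$ misses $N_{f,g,\eps}$. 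Note also that $\Dirac{x_0}$ itself \emph{is} strongly mixing, so $\MTmix(X)$ cannot be separated from a residual set by one global witness; one must cover $\MTmix(X)$ by countably many closed nowhere dense sets chosen locally (measures near a long-period periodic measure carry a definite correlation defect along multiples of the period), which is what Sigmund's argument actually does.
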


There are various extensions of Sigmund's results. %Some of these are described in more details later in this paper.
Hofbauer~\cite{Hofbauer87,Hofbauer88} and Hofbauer and Raith~\cite{HR} proposed weaker forms of the specification property to prove  a variant of Sigmund's Theorem for some transitive and not necessarily continuous transformations $T\colon [0,1]\to[0,1]$.
Further generalizations were given by Abdenur, Bonatti, Crovisier \cite{ABC11}, Coudene and Schapira \cite{CS}, Sun and Tian \cite{ST12} to name a few.

Entropy-density of ergodic measures implies that the ergodic measures are dense in the simplex of invariant measures, but there are systems with dense but not entropy-dense set of ergodic measures (see \cite[Proposition 8.6.]{GK}).

The paper \cite{GK} introduces two new properties of a~set $K\subset\text{Per}(T)$: \emph{closeability} with respect to $K$ and \emph{linkability} of $K$. It is proved there that Sigmund's Theorem holds for a~system which is closeable with respect to a linkable set $K\subset\text{Per}(T)$. The periodic specification property implies that the dynamical system is closeable with respect to $K=\text{Per}(T)$, which is also linkable. These methods lead to an~extension of Sigmund's theorem which covers also:
\begin{itemize}
 \item systems with the periodic weak specification property,
\item $\mathcal C^1$-generic diffeomorphisms on a manifold,
\item irreducible Markov chains over a~countable alphabet,
\item all $\beta$-shifts,
\item many other coded systems.
\end{itemize}
Furthermore, there is a~continuous-time counterpart of this theory. For the details we refer the reader to \cite{GK}.

%{\color{blue}{EXAMPLES}}
There are many examples of systems with the specification property besides iterates of an Axiom A diffeomorphism restricted to an elementary set.
Weiss \cite{Weiss73} noted that a mixing sofic shift (hence a mixing shift of finite type) has the periodic specification property.
Kwapisz \cite{Kwapisz00} extended it to cocyclic shifts.
%S-gap shifts with specification
%K. Yamamoto, On the weaker forms of the specification property and their applications, Proc. Amer. Math. Soc., 137 (2009), 3807-3814.

%A dynamical system has the periodic specification property if one can approximate distinct pieces of orbits by single periodic orbits with a certain uniformity.
%Bowen introduced this property in \cite{Bowen} and showed that a basic set for an axiom A diffeomorphism $T$ can be partitioned into a finite number of disjoint sets $\Lambda_1,\ldots,\Lambda_k$ which are permuted by $T$ and $T^k$ restricted to $\Lambda_j$ has the specification property for each $j=1,\ldots,k$. There are many generalizations of this notion. One of them is due to Dateyama, who introduced in \cite{Dateyama} the \emph{weak specification property} (Dateyama calls it ``almost weak specification''). Dateyama's notion is a variant of a specification property used by Marcus in \cite{Marcus80}   (Marcus did not coined a name for the property he stated in \cite[Lemma 2.1]{Marcus80}, we think that \emph{periodic weak specification} is an appropriate name).

Blokh characterized the periodic specification property for continuous interval maps \cite{Blokh83,Blokh95} proving the following (an alternative proof was given by \cite{Buzzi97}):
\begin{thm}[\cite{Blokh83}, Theorem 6]
A %continuous map $T\colon [0,1]\to[0,1]$
dynamical system $([0,1],T)$ has the periodic specification property if and only if it is topologically mixing.
\end{thm}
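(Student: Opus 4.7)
The $(\Leftarrow)$ direction is immediate: periodic specification trivially implies specification, and a topologically transitive map on the nontrivial interval $[0,1]$ must be surjective (its image is a closed sub-interval and any gap would be a non-wandering open set), so part~(4) of the easy-consequences theorem (onto $+$ specification $\Rightarrow$ mixing) applies.

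For $(\Rightarrow)$, I would exploit the interval-specific \emph{locally eventually onto} (leo) property in three main steps. \textbf{Step 1 (leo).} Show that if $T$ is mixing, then for every nonempty open $U\subset[0,1]$ there exists $n$ with $T^n(U)=[0,1]$. Since $T^n(J)$ is an interval for any sub-interval $J\subset U$ (continuous image of a connected set), and mixing forces this interval to meet arbitrary neighborhoods of both $0$ and $1$ for all large $n$, the interval $T^n(J)$ must eventually equal $[0,1]$. \textbf{Step 2 (uniform leo).} By a compactness argument on the space of closed sub-intervals of length at least $\delta$, promote Step~1 to the uniform statement: for every $\delta>0$ there is $N_\delta\in\N$ such that $T^n(J)=[0,1]$ whenever $J\subset[0,1]$ is an interval with $|J|\geq\delta$ and $n\geq N_\delta$.

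\textbf{Step 3 (tracing and periodicity).} Given $\eps>0$, let
\[
W_j=\bigcap_{k=a_j}^{b_j}T^{-(k-a_j)}\bigl(B(T^k(x_j),\eps)\bigr)
\]
denote the open cylinder of $\eps$-tracers of the $j$-th segment. Build a tracing point by finite downward induction: set $V_n:=W_n$ and $V_{j-1}:=W_{j-1}\cap T^{-(a_j-a_{j-1})}(V_j)$. Nonemptiness of $V_{j-1}$ follows by applying uniform leo to a sub-interval of $W_{j-1}$ whose length is controlled from below via the modulus of uniform continuity of $T^{b_{j-1}-a_{j-1}}$ at $T^{a_{j-1}}(x_{j-1})$. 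Any point $z\in V_1$ yields an $\eps$-tracing point $y$ with $T^{a_1}(y)=z$. For periodicity, one further application of uniform leo produces a sub-interval $V'\subset V_1$ on which $T^{b_n-a_1+N(\eps)}$ continuously covers a closed interval containing $V'$, and the intermediate-value theorem (applied to $T^{b_n-a_1+N(\eps)}(x)-x$) then yields a periodic tracing point of the required period.

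The main obstacle is Step~3: the diameter of $W_j$ may shrink with the segment length $b_j-a_j$ through the modulus of continuity of $T^{b_j-a_j}$, so a uniform leo bound depending only on $\eps$ does not directly suffice. The resolution, due in essence to Blokh, is that in a mixing interval map the time required to spread a ball of radius shrunk by $\ell$-fold iteration back onto all of $[0,1]$ grows at most linearly in $\ell$, while each segment of length $\ell$ already provides $\ell$ ``free'' iterates of re-expansion inside the gap budget. Making this balance quantitatively precise --- or, alternatively, replacing the direct induction by constructing a finite leo-based Markov-like cover of $[0,1]$ --- is the technical heart of the proof.
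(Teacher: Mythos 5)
The paper offers no proof of this statement --- it is quoted with a citation to Blokh's original argument and to Buzzi's alternative proof --- so your attempt can only be measured against those sources. Your easy direction is fine (periodic specification implies specification and surjectivity, and then part (4) of the folklore theorem gives mixing). The converse contains two genuine gaps. The first is that \textbf{Step 1 is false as stated}: a topologically mixing interval map need not be locally eventually onto. Mixing is equivalent to the weaker statement that for every nondegenerate interval $J$ and every $\eps>0$ there is $N$ with $T^n(J)\supset[\eps,1-\eps]$ for all $n\ge N$; the intervals $T^n(J)=[a_n,b_n]$ satisfy $a_n\to 0$ and $b_n\to 1$ but may never equal $[0,1]$. (Standard examples fix an endpoint, say $T(1)=1$ with $T^{-1}(1)=\{1\}$ and $T<1$ elsewhere, built from countably many laps accumulating at $1$; no interval avoiding $1$ ever covers $1$, yet the map is mixing.) Your inference ``meets arbitrary neighborhoods of both endpoints, hence eventually equals $[0,1]$'' is a non sequitur. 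This is repairable, since $\eps$-tracing only requires the $[\eps,1-\eps]$-covering version, but the repair must actually be carried out, and it also complicates the periodic-point step: to apply the fixed-point lemma you need a closed interval $J$ with $T^m(J)\supset J$, which the weakened covering statement does not immediately provide near the endpoints.

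The second and more serious gap is one you acknowledge yourself: the entire difficulty of the theorem sits in Step 3, and your proposed resolution is an assertion rather than an argument. The tracer set $W_j$ of an orbit segment of length $\ell$ shrinks with $\ell$ (through the modulus of continuity of $T^{\ell}$), so no uniform covering statement for intervals of length $\ge\delta(\eps)$ applies to it, and any scheme that spends extra time ``growing linearly in $\ell$'' to re-expand it would make the gap depend on the segment lengths, which is exactly what the definition forbids. The mechanism that actually works is different: one proves that for every $\eps>0$ there is $\delta(\eps)>0$ such that for every $x$ and every $\ell$ the connected component of the Bowen ball $B_\ell(x,\eps)$ containing $x$ has $T^{\ell-1}$-image of length at least $\delta(\eps)$ --- the last iterate of the segment already restores a definite amount of width near $T^{b_j}(x_j)$, after which a gap of length $N(\eps)$ suffices to reach the next segment. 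Establishing this recovery-of-length lemma for arbitrary continuous (not piecewise monotone, no distortion control) mixing interval maps is precisely the technical content of \cite{Buzzi97}, and it is absent from your proposal. As it stands you have a plausible outline with one false step and the key lemma missing.
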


Later, Blokh generalized this result to topological graphs \cite{Blokh84,Blokh87} (see also a~presentation of Blokh's work in \cite{ARR}). An independent proof, extending some ideas for interval case in \cite{Buzzi97} was developed in \cite{HKO}. Recall that a \emph{topological graph}
is a continuum $G$ such that there exists a~one-dimensional simplicial complex $\mathcal K$ with geometric carrier $|\mathcal K|$ homeomorphic to $G$ (see \cite[p.~10]{Croom78})\label{top-graph}. Examples include the compact interval, circle, all finite trees etc.

\begin{thm}[cf. \cite{Blokh87}, Theorem 1]
	Let $G$ be a topological graph.
	A dynamical system $(G,T)$ has the specification property if and only if it is topologically mixing.
\end{thm}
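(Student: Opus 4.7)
The forward implication is immediate from item (4) of the earlier theorem summarizing easy consequences of the specification property: any surjective dynamical system with specification is topologically mixing, and in our setting $T$ is automatically onto (since $G$ is a nontrivial space admitting a mixing map). The substantive task is the converse, and the plan is to reduce it to a uniform covering property which, in the one-dimensional setting of a topological graph, is in fact equivalent to mixing.

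The key intermediate step I would establish is the following covering lemma, which is the heart of Blokh's argument: if $(G,T)$ is a mixing continuous self-map of a topological graph, then for every $\varepsilon > 0$ there exists $N = N(\varepsilon) \in \N$ such that $T^n(U) = G$ for every open set $U \subset G$ of diameter at least $\varepsilon$ and every $n \geq N$. The idea is to exploit the one-dimensional structure of $G$: every such $U$ contains a nondegenerate arc, and the images of arcs under a mixing graph map behave much like images of subintervals under a mixing interval map. I would first show that $T^n(\alpha)$ has diameter bounded away from zero uniformly for large $n$; then that mixing rules out any invariant proper subgraph that could obstruct the iterates from filling $G$, so $T^n(\alpha) = G$ eventually; and finally that compactness of $G$ together with equicontinuity on short time scales yields uniformity in $U$. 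The interval case is essentially Blokh \cite{Blokh83}, and the extension to general graphs requires careful handling of branch points and the possibility that arcs collapse in a controlled way under $T$.

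Granting the covering lemma, the specification property follows by a standard pull-back construction. Given $\varepsilon > 0$, choose $\delta = \delta(\varepsilon) > 0$ via uniform continuity so that a $\delta$-perturbation remains $\varepsilon$-small under the first few iterates, and take the spacing constant to be $N(\delta)$ from the covering lemma. For any $N(\delta)$-spaced specification $\xi = \{T^{[a_j,b_j]}(x_j)\}_{j=1}^n$, build the tracing point $y$ by reverse induction on $j$: start by tracing the last segment exactly via $x_n$, and at each step use surjectivity of $T^{a_{j+1}-a_j}$ on $B(T^{a_j}(x_j),\delta)$ to produce a preimage $w_j$ of the previously chosen value at time $a_{j+1}$ that lies within $\delta$ of $T^{a_j}(x_j)$. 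This gives a $y$ that is $\varepsilon$-close to each orbit segment. Passing to the natural extension and invoking Lemma \ref{lem:psp} then upgrades specification to periodic specification whenever the natural extension is expansive.

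The chief obstacle is plainly the covering lemma. Its proof is a nontrivial piece of one-dimensional dynamics blending mixing, compactness, and the combinatorial structure of the graph; the delicate points are the analysis near branch points and the requirement that uniformity be genuinely independent of the base point $x$. Once it is in hand, the remainder of the argument is essentially bookkeeping, and the same strategy recovers the interval and tree cases as special instances.
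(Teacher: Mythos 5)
The survey does not actually prove this theorem: it is quoted from Blokh, with pointers to \cite{Blokh84,Blokh87}, to the exposition in \cite{ARR}, and to the independent proof in \cite{HKO} extending Buzzi's interval argument \cite{Buzzi97}; so your sketch can only be measured against those sources. Against that standard, the central claim on which you rest everything is false. The statement ``for every $\varepsilon>0$ there is $N$ with $T^n(U)=G$ for every open $U$ of diameter $\ge\varepsilon$ and every $n\ge N$'' is a uniform form of topological exactness, and mixing graph maps need not be exact. Already on $G=[0,1]$ one can build a topologically mixing map with $T(1)=1$, $T^{-1}(1)=\{1\}$ and $T(x)<1$ for $x<1$, by inserting peaks $p_j\uparrow 1$ of heights $1-\delta_j\uparrow 1$ arranged so that $1-\delta_j\in(p_{j+1},p_{j+2})$; for any interval $J\not\ni 1$ the images $T^n(J)$ are then intervals $[0,b_n]$ with $b_n\uparrow 1$ but $b_n<1$ for all $n$, so $T^n(J)\ne[0,1]$ forever. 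Such a map is mixing, hence by the very theorem under discussion it has specification, so your asserted equivalence of the covering lemma with mixing cannot hold. What Blokh and Buzzi actually prove is a covering relation of the form $T^n(J)\supseteq G\setminus W$, where $W$ is a union of $\varepsilon$-neighborhoods of a finite exceptional set (endpoints of $G$ and certain fixed points), and a substantial part of their proofs is devoted precisely to tracing orbit segments that linger near those exceptional points.

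Even granting a corrected covering lemma, the pull-back step is not ``bookkeeping'': it conflates metric balls with Bowen balls. Choosing $w_j\in B(T^{a_j}(x_j),\delta)$ whose image at time $a_{j+1}-a_j$ is prescribed controls, via uniform continuity, only the first few iterates of $w_j$, whereas $\varepsilon$-tracing the segment $T^{[a_j,b_j]}(x_j)$ requires $w_j$ to lie in the Bowen ball $B_{b_j-a_j+1}(T^{a_j}(x_j),\varepsilon)$, whose connected components have diameters tending to $0$ as $b_j-a_j\to\infty$; the covering lemma therefore cannot be applied to them with a uniform $N(\varepsilon)$. The real work in the Blokh--Buzzi--\cite{HKO} approach is to show that the time-$(b_j-a_j)$ image of a suitable component of each Bowen ball still has definite size, and then to chain the covering relations across the gaps; none of that appears in your sketch. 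Two smaller points: surjectivity in the easy direction is the paper's standing convention rather than a consequence of mixing as you argue, and the appeal to Lemma~\ref{lem:psp} is vacuous here since natural extensions of graph maps are generally not expansive --- fortunately the statement only asks for specification, not its periodic version.
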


It would be interesting to know whether a similar result holds for dendrites.

We conclude this section by mentioning some important applications of the specification property we  have no place to describe in more details.
The specification property was used by Takens and Verbitskiy \cite{TV} to obtain a variational description of the dimension of multifractal decompositions. This result motivated Pfister and Sullivan \cite{PS07} to introduce the $g$-almost product property renamed later the almost specification property by Thompson \cite{Thompson12}. Another application is due to Fan, Liao and Peyri\`{e}re \cite{FPL08}, who proved that for any system with the specification property the Bowen's topological entropy of the set of generic points of any invariant measure $\mu$ is equal to the measure-theoretic entropy of $\mu$. Further generalizations can be found in \cite{Oliveira12, KT13, Varandas12}.

%A shift space with the specification property has a~unique equilibrium state for every H\"older potential (\cite{Bowen74}). Climenhaga and Thompson obtained %uniqueness results for systems with non-uniform specification properties (\cite{CT12}, \cite{CT13}). Bowen in \cite{Bowen75} proved that the unique %equilibrium state for a~mixing shift of finite type has exponential decay of correlations an a~version of the central limit theorem holds.

%\cite{BS}
%\begin{thm}
%	If $(X,T)$ has the (periodic) specification then also $(\M(X),T_M)$ has the (periodic) specification.
%\end{thm}

%Bowen introduced this property in \cite{Bowen} and showed that a basic set for an axiom A diffeomorphism $T$ can be partitioned into a finite number of disjoint sets $\Lambda_1,\ldots,\Lambda_k$ which are permuted by $T$ and $T^k$ restricted to $\Lambda_j$ has the specification property for each $j=1,\ldots,k$. There are many generalizations of this notion. One of them is due to Dateyama, who introduced in \cite{Dateyama} the \emph{weak specification property} (Dateyama calls it ``almost weak specification''). Dateyama's notion is a variant of a specification property used by Marcus in \cite{Marcus80}   (Marcus did not coined a name for the property he stated in \cite[Lemma 2.1]{Marcus80}, we think that \emph{periodic weak specification} is an appropriate name).

\section{Weak specification}

Among examples of dynamical systems with the periodic specification property are hyperbolic automorphisms of the torus.
Lind proved that non-hyperbolic toral automorphisms do not have the periodic specification property (see Theorem~\ref{thm:19}).
Nevertheless, Marcus showed that the periodic point measures are dense in the space of invariant measures for ergodic automorphisms of the torus (automorphisms which are ergodic with respect to the Haar measure on the torus). To apply Sigmund's ideas Marcus has extracted in \cite[Lemma 2.1]{Marcus80}, the following property and showed that it holds for every ergodic toral automorphism.
%Many authors studied whether this property holds for other automorphisms of topological groups. The list includes

%\section{Weak specification}
\begin{defn}\label{def:weak_spec}
A  dynamical system $(X,T)$ has the \emph{weak specification property} if for every $\eps>0$ there is a nondecreasing function $M_\eps \colon \N \to \N$ with $M_\eps(n)/n\to 0$ as $n\to \infty$ such that any $M_\eps$-spaced specification is $\eps$-traced by some point in $X$. We say that $M_\eps$ is an \emph{$\eps$-gap function} for $(X,T)$.
\end{defn}
Marcus  did not give this property any name in \cite{Marcus80}. It was coined \emph{almost weak specification} by Dateyama \cite{Dateyama82} (this name is also used by Pavlov \cite{Pavlov14} or Quas and Soo \cite{QS}). Dateyama chose this name probably due to the fact that at that time the term weak specification was used as a name for %at least two properties: the one
the property we call specification \cite{BS}. %and the one we call specification of order $2$.
At present the \emph{almost specification property} (see below) has gained some attention, and as we explain later it is independent of the property given by Definition \ref{def:weak_spec}. Therefore we think that \emph{weak specification} is a more accurate name.

An easy modification of the above definition leads to the notion of the \emph{periodic weak specification property} in which we additionally require that the tracing point is periodic. As for the classical specification property, both weak specification notions are equivalent provided the natural extension is expansive. The proof is analogous to that of Lemma \ref{lem:psp}.
\begin{lem}[folklore]
If $(X,T)$ has the weak specification property and its natural expansion is expansive, then $(X,T)$ has the periodic weak specification property.
\end{lem}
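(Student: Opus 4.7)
My plan is to mimic the proof of Lemma~\ref{lem:psp}. First I would pass to the natural extension $(X_T,\sigma_T)$: since the weak specification property and the periodic weak specification property both transfer between a system and its natural extension, it suffices to establish the lemma for $(X_T,\sigma_T)$, so I may assume $(X,T)$ is invertible and expansive with expansivity constant $c>0$. Given $\eps>0$, I would choose $\delta>0$ small (in terms of $c$ and moduli of uniform continuity of iterates of $T$, to be fixed later) and let $M_\delta$ be the $\delta$-gap function supplied by weak specification.

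Given an $M_\delta$-spaced specification $\xi=\{T^{[a_j,b_j]}(x_j)\}_{j=1}^n$, normalized so that $a_1=0$, I would take the candidate period to be $L=b_n+M_\delta(b_1+1)+1$, chosen so that the bi-infinite $L$-periodic extension $\xi_\infty$ of $\xi$ (placing a shifted copy of $\xi$ at every time $kL$, $k\in\Z$) is itself $M_\delta$-spaced. For each $k\in\N$, the truncation of $\xi_\infty$ to its $2k+1$ central copies, after translating to non-negative times, is a finite $M_\delta$-spaced specification, to which weak specification applies; translating back I obtain $y_k\in X$ that $\delta$-traces $\xi_\infty$ on the segments of these $2k+1$ copies. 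By compactness I extract a subsequence $y_{k_s}\to y_\infty$, and continuity of $T$ gives that $y_\infty$ $\delta$-traces the full bi-infinite $\xi_\infty$.

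By $L$-periodicity of $\xi_\infty$ the point $T^L(y_\infty)$ also $\delta$-traces $\xi_\infty$, whence $\rho(T^n y_\infty,T^{n+L}y_\infty)\le 2\delta$ for every $n$ lying inside a segment of $\xi_\infty$. Combined with expansiveness this should force $y_\infty=T^L(y_\infty)$, turning $y_\infty$ into the desired periodic point; the $\delta$-tracing of $\xi\subset\xi_\infty$ then yields the periodic $\eps$-tracing.

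The main obstacle is the very last invocation of expansiveness: it requires $\rho(T^n y_\infty,T^{n+L}y_\infty)<c$ for \emph{all} $n\in\Z$, whereas the tracing only delivers this on segment positions. I would bridge the gap positions via uniform continuity of the iterates $T^j$, using that every gap of $\xi_\infty$ has length at most $G:=\max(\max_i(a_{i+1}-b_i-1),\,M_\delta(b_1+1))$, and I would shrink $\delta$ until the relevant modulus of continuity $\omega_G(2\delta)$ falls below $c$. The delicate point is that $G$ depends on $\xi$, so $\delta$ cannot be fixed in terms of $\eps$ alone; I would handle this by first using weak specification once to replace $\xi$ by a single long orbit-segment $T^{[0,b_n]}(y)$ that $\delta$-traces $\xi$, eliminating the intra-$\xi$ gaps and leaving only the inter-copy gap $M_\delta(b_n+1)$, and then exploiting $M_\delta(n)/n\to 0$ to keep that gap small relative to the segment length. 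These estimates mirror those in Lemma~\ref{lem:psp}, with additional bookkeeping for the variable gap function.
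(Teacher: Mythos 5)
Your skeleton is the standard one and, as far as one can tell, the one the paper intends: the paper itself offers no argument beyond ``the proof is analogous to that of Lemma~\ref{lem:psp}'', whose proof is in turn deferred to \cite{KO12}. Passing to the natural extension, periodically extending the specification with period $L$, extracting a limit tracing point $y_\infty$, and trying to force $T^L(y_\infty)=y_\infty$ by expansiveness is exactly the right frame, and you correctly isolate the crux: expansiveness needs $\rho(T^jy_\infty,T^{j+L}y_\infty)<c$ for \emph{every} $j$, while tracing controls only the segment positions.

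The difficulty you flag is, however, not resolved by your fix, so the last step is a genuine gap. After collapsing $\xi$ to a single segment $T^{[0,b_n]}(y)$, each period of $\xi_\infty$ still contains one gap of length $G=M_\delta(b_n+1)$, and bridging it by uniform continuity requires $\omega_G(2\delta)<c$, where $\omega_G$ is a modulus of continuity for $T,\dots,T^G$. This bound depends on the \emph{absolute} length $G$, not on the ratio $G/(b_n+1)$, so the hypothesis $M_\delta(n)/n\to 0$ buys you nothing: for fixed $\delta$ the gap $M_\delta(b_n+1)$ is unbounded in $b_n$, and shrinking $\delta$ only enlarges $M_\delta$, which is the same circularity you already noted. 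Concretely, on the full $2$-shift one has $\rho(\sigma^i x,\sigma^i y)$ as large as $2^i\rho(x,y)$, so the required inequality becomes roughly $2^{M_\delta(b_n+1)}\delta<c$; this fails for every $\delta$ once one is handed a perfectly admissible gap function such as $M_\delta(n)=\lceil\log_2(1/\delta)\rceil\,\lceil\sqrt{n}\rceil$, and a proof may only use the existence of \emph{some} gap function, not a conveniently slow one. (The same obstruction already defeats this route for the classical specification property, where the gap $N(\delta)$ is constant in the specification but still depends on $\delta$.) What is missing is a mechanism other than uniform continuity for controlling the tracing point on the gap positions --- for instance, an argument carried out on the compact $T^L$-invariant set of all points tracing the bi-infinite periodic specification, or, as in the setting of \cite{KO12} where shadowing is available, the replacement of the specification by a genuine \emph{periodic pseudo-orbit}, which has no gaps, so that the closing estimate $\rho(T^jy,T^{j+L}y)\le 2\delta<c$ holds for all $j$ simultaneously. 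As written, ``combined with expansiveness this should force $y_\infty=T^L(y_\infty)$'' is not justified.
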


Note that the length of a gap a tracing point is allowed to spend between two orbit segments of a specification depends on the length of the later segment, that is, in the definition of an $\nu$-spaced specification
we have the condition
\begin{equation}
a_i-b_{i-1} \ge  \nu(b_i-a_i+1)  \text{ for }2\le i \le n. \label{spaced:1}
\end{equation}
One may consider a ``dual'' notion of an $\nu$-spaced specification in which the length of a gap between two consecutive orbit segments in a specification
is a function of the length of the earlier segment, that is, we may replace the condition \eqref{spaced:1} by %the following
\begin{equation}
a_i-b_{i-1} \ge  \nu(b_{i-1}-a_{i-1}+1)  \text{ for }2\le i \le n. \label{spaced:2}
\end{equation}
It seems that there is no agreement which of those conditions should be used and both are present in the literature (the variant using \eqref{spaced:1} is used in \cite{Dateyama82, Dateyama91,KOR,QS} while \eqref{spaced:2} is required by \cite{Pavlov14}).
These two ``dual'' definitions of the weak specification property are non-equivalent, as shown by the example below. Nevertheless, the proofs assuming one of the variants seem to be easily adapted to the case when the other variant is used.
\begin{exmp}Let us call, tentatively, the weak specification property as defined in Definition \ref{def:weak_spec} the \emph{forward weak specification} property and its dual version (the one in which the condition \eqref{spaced:2} replaces \eqref{spaced:1}) the \emph{backward weak specification} property. We will construct two shift spaces (see Section~\ref{sec:symbolic} for definitions we use here).
Consider two sets of words over $\{0,1\}$ given by
\[\mathcal{F}=\set{1 0^b 1^a : a,b\in\N,\,b<\log_2 (a)}\text{ and }\mathcal{G}=\set{1^a 0^b 1 : a,b\in\N,\,b<\log_2 (a)}.
\]
Let $X=X_{\mathcal{F}}$ and $Y=X_{\mathcal{G}}$ be shift spaces defined by taking $\mathcal{F}$ and $\mathcal{G}$ as the sets of forbidden words. Note that for any words $u,w$ admissible in $X$ we have $u 0^{\lceil\log_2 |w|\rceil}w\in \lang(X)$, and similarly if $u,w\in\lang(Y)$, then $u 0^{\lceil\log_2 |u|\rceil}w$ is also admissible in $Y$. Using this observation it is easy to check that $(X,\sigma)$ satisfies the forward weak specification property and $(Y,\sigma)$ satisfies the backward weak specification property. Note that both shift spaces $X$ and $Y$ contain points $x_1=1^\infty$ and $x_2=01^\infty$. Thus, the words $1^\ell$ and $01^\ell$ are admissible in both $X$ and $Y$ for all $\ell\in\N$. Furthermore, the necessary condition for the word $1 w 01^{\ell}$ to be admissible in $X$ is that $w$ ends with $0^s$ where $s=\lfloor\log_2\ell\rfloor$.   Assume that $X$ has also the backward specification property. Let $k=M_{1/2}(1)$ where $M_{1/2}$ denotes the $1/2$-(``backward'')-gap function for $X$. This implies that for every $\ell\in\N$ there exists a word $w$ of length $k$ such that $1 w 01^{\ell}$ is  admissible in $X$. But this contradicts the definition of $X$ if $\log_2(\ell)\ge k+1$. Therefore $X$ cannot have the backward weak specification property. A similar argument shows that $Y$ does not have the forward weak specification property.
%for any $w$ with $|w|\leq k$, while the forward weak specification would imply that there is such a word. This shows that $Y$ does not have the forward weak specification property. %, since condition \eqref{spaced:1} cannot be satisfied.
%By exchanging the roles of $a_1$ and $a_1$, e.g. putting $a_0=0$, $b_0=2^{2k}$, $a_1=b_1=b_0+k$ we obtain that $X$ does not satisfy condition \eqref{spaced:2}.
\end{exmp}

It is easy to see that weak specification is inherited by factors, finite products and higher iterates. Furthermore it implies topological mixing.

\begin{thm}[folklore]
\begin{enumerate}
\item If $(X,T)$ has the weak specification property then $(X,T^k)$ has the weak specification property for every $k\geq 1$.
\item If $(X,T)$, $(Y,S)$ have the weak specification property then $(X\times Y, T\times S)$ has the weak specification property.
\item Every factor of a system with the weak specification property has the weak specification property.
\item Every onto map $T\colon X\to X$ with the weak specification property is topologically mixing.
\end{enumerate}
\end{thm}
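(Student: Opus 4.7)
The plan is to mimic, in each item, the standard argument known for the classical specification property, with the only extra verification being that the sublinearity condition $M_\eps(n)/n \to 0$ is preserved by each construction. I expect none of the four parts to pose a genuine obstacle; the main care is the bookkeeping of gap functions, which now depend on the length of the segment rather than being constant.

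For (1), starting from a gap function $M_\eps$ for $T$ and a fixed $k \ge 1$, I would define $M'_\eps(n) := \lceil M_\eps(kn)/k \rceil$ and note that $M'_\eps(n)/n \to 0$ follows from the corresponding property of $M_\eps$. An $M'_\eps$-spaced $T^k$-specification $\xi = \{(T^k)^{[a_j, b_j]}(x_j)\}$ rescales to the $T$-specification $\xi' = \{T^{[ka_j, kb_j]}(x_j)\}$; its gaps $k(a_i - b_{i-1})$ dominate $M_\eps(kb_i - ka_i + 1)$ by the choice of $M'_\eps$, so $\xi'$ is $M_\eps$-spaced and thus $\eps$-traced by some $y \in X$; the same $y$ then $\eps$-traces $\xi$ under $T^k$. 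For (2), I would equip $X \times Y$ with the maximum metric and take $M_\eps(n) := \max\{M^X_\eps(n), M^Y_\eps(n)\}$; an $M_\eps$-spaced specification in $X \times Y$ splits coordinatewise into specifications each meeting the required gap condition, and a pair of tracing points $(u,v)$ then traces the product.

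For (3), given a factor map $\pi \colon X \to Y$, I would use uniform continuity of $\pi$ to select, for each $\eps > 0$, a $\delta > 0$ with $\rho_X(x, x') \le \delta \Rightarrow \rho_Y(\pi(x), \pi(x')) \le \eps$, and set $M^Y_\eps := M^X_\delta$. Any $M^Y_\eps$-spaced specification $\{S^{[a_j, b_j]}(y_j)\}$ in $Y$ lifts, by picking arbitrary $x_j \in \pi^{-1}(y_j)$, to an $M^X_\delta$-spaced $T$-specification; its $\delta$-tracing point $u \in X$ projects to $\pi(u) \in Y$, which $\eps$-traces the original. For (4), I would pick $x_1 \in U$, $x_2 \in V$ and $\eps > 0$ so small that the open $\eps$-balls around $x_1, x_2$ lie in $U, V$ respectively; then for every $n \ge M_\eps(1) + 1$ the rank-$2$ family $\{T^{[0,0]}(x_1), T^{[n,n]}(x_2)\}$ is $M_\eps$-spaced, and any tracing point $y$ satisfies $y \in U$ and $T^n(y) \in V$, establishing mixing.

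The only mildly delicate bookkeeping is in (1), where one must confirm that $\lceil M_\eps(kn)/k \rceil$ still grows sublinearly; this is immediate. In (3) I would remark that only the initial points of the lifted orbit segments need to be chosen in $\pi^{-1}$, so no continuous section of $\pi$ is required and the argument applies to every surjective factor map. No step of the proof presents a real obstacle.
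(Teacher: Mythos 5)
Parts (1)--(3) are fine: the paper dismisses them as obvious, and your gap-function bookkeeping (the rescaled function $\lceil M_\eps(kn)/k\rceil$, the maximum of the two gap functions, and the pullback $M^Y_\eps:=M^X_\delta$ via uniform continuity) is correct and in fact more careful than the paper bothers to be. The problem is in part (4), which is the only part the paper actually proves, and your version has a genuine gap: you never use surjectivity, even though the statement explicitly assumes $T$ is onto and the paper gives the example $X=\{0,1\}$, $T\equiv 0$ of a non-surjective system with specification that is not mixing. Concretely, with the paper's conventions the orbit segment $T^{[n,n]}(x_2)$ is the single point $T^n(x_2)$, and the tracing condition for a point $y$ reads $\rho\bigl(T^n(y),T^n(x_2)\bigr)\le\eps$. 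This places $T^n(y)$ near $T^n(x_2)$, which can be anywhere in $X$; it does not place $T^n(y)$ near $x_2$, so you cannot conclude $T^n(y)\in V$.

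The fix is exactly the step in the paper's proof that your argument omits: use surjectivity to choose $x_2'\in T^{-n}(\{x_2\})$ and run the specification $\{T^{[0,0]}(x_1),\,T^{[n,n]}(x_2')\}$. Then the second segment is the single point $T^n(x_2')=x_2\in V$, and the tracing condition gives $\rho\bigl(T^n(y),x_2\bigr)\le\eps$, hence $T^n(y)\in V$ once $\eps$ is smaller than the radius of a ball about $x_2$ contained in $V$. With that one-line correction your part (4) coincides with the paper's proof.
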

\begin{proof}
We prove only the last statement as the first three are obvious. Take $x,y\in X$ and $\eps>0$. It is enough to prove that for every $n\geq M_\eps(1)+1$ there exists $z\in X$ such that $\rho(x,z)<\eps$ and $\rho(T^n(z),y)<\eps$.
Fix any $n> M_{\eps}(1)$. Let $a_1=b_1=0$, $a_2=b_2=n$ and take any $y'\in T^{-n}(\set{y})$. Then $\set{T^{[a_1,b_1]}(x),T^{[a_2,b_2]}(y')}$
is an $M_\eps(1)$-spaced specification and hence the result follows.
\end{proof}
%{\color{blue} wkleic dowod}

%Question: Does weak specification imply topological mixing?

%{\color{blue} EXAMPLES}

%{\color{blue}{\cite{QS}(Quas Soo)} universality, Krieger}

%Marcus proved in \cite{Marcus} that the periodic point measures are weakly dense in the space of invariant measures for ergodic toral automorphisms. Dateyama
%established that for an automorphism $T$ of a compact metric abelian group the weak specification property is equivalent to  ergodicity of $T$ with respect to Haar measure \cite{Dateyama2}. %Note that Dateyama calls our weak specification the almost weak specification property.
%

\subsection{Specification for automorphisms of compact groups}

Sigmund \cite[p. 287, Remark (E)]{Sigmund74} asked which ergodic automorphisms of compact groups have the specification property. Lind \cite{Lind79} gave the answer for ergodic toral automorphisms. The result of Marcus completed the characterization of specification-like properties for that case.
We will briefly describe these results below.

Lind \cite{Lind82} calls a toral automorphisms \emph{quasi-hyperbolic} if the associated linear map has no roots of unity as eigenvalues. An automorphisms of the torus is quasi-hyperbolic if and only if it is ergodic with respect to Haar measure \cite{Halmos43}. Quasi-hyperbolic toral automorphisms can be classified using the spectral properties of the associated linear maps. Following Lind \cite{Lind79} we distinguish:
\begin{itemize}
  \item \emph{Hyperbolic automorphisms}, that is, those without eigenvalues on the unit circle.
  \item \emph{Central spin automorphisms}, that is, those with some eigenvalues on the unit circle, but without off-diagonal $1$'s in the Jordan blocks associated with unitary eigenvalues.
  \item \emph{Central skew automorphisms}, that is, those with off-diagonal $1$'s in the Jordan blocks associated with some unitary eigenvalues.
\end{itemize}

We can summarize results of \cite{Lind82, Marcus80} as follows.

\begin{thm}[Lind, Marcus]\label{thm:19}
Let $T$ be a quasi-hyperbolic toral automorphisms. Then:
\begin{enumerate}
  \item $T$ has the periodic specification property if and only if $T$ is hyperbolic;
  \item $T$ has the specification property, but does not have the periodic specification property if and only if $T$ is central spin;
  \item $T$ has the weak specification property, but does not have the specification property if and only if $T$ is central skew.
\end{enumerate}
\end{thm}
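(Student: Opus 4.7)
Let $A\in GL_n(\Z)$ be the integer matrix inducing $T$ on $\mathbb{T}^n=\R^n/\Z^n$. Since $T$ is quasi-hyperbolic, no eigenvalue of $A$ is a root of unity, so we split $\R^n=E^h\oplus E^c$, where $E^h$ corresponds to eigenvalues off the unit circle and $E^c$ to the (non-root-of-unity) unitary ones. The three cases in the theorem correspond to $E^c=\{0\}$ (hyperbolic), $E^c\ne\{0\}$ with $A|_{E^c}$ diagonalizable over $\mathbb{C}$ (central spin), and $A|_{E^c}$ non-diagonalizable (central skew). My strategy is to prove each case by a ``positive'' construction plus a ``negative'' obstruction keyed to the spectral behavior on $E^c$.

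\emph{Case (1).} If $E^c=\{0\}$ then $T$ is Anosov, and ergodicity (Halmos) together with the Anosov property forces topological mixing on the whole torus, which is a single basic set for this Axiom A diffeomorphism. Bowen's Specification Theorem then gives periodic specification. For the converse, I would argue contrapositively: if $E^c\ne\{0\}$, then an invariant subtorus exists on which $T$ acts non-hyperbolically; the periodic points of $T$ all lie in $(\Q/\Z)^n$, and if $T$ had periodic specification one could, for any $\eps$, trace an arbitrary orbit segment by a periodic point, forcing the $E^c$-component of a transcendental starting point to be approximable to within $\eps$ by a rational point whose denominator is controlled by $N(\eps)$ — a finite set of admissible phases on $E^c$. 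A segment on $E^c$ of length $n$ at a generic phase then cannot be $\eps$-traced for all $n$, giving the required contradiction.

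\emph{Case (2).} For the ``if'' direction I would use Marcus's construction (weak specification for every quasi-hyperbolic toral automorphism) and then sharpen the gap estimate. In the central spin case $A|_{E^c}$ is conjugate to an orthogonal transformation, so $T|_{E^c}$ is an equicontinuous minimal rotation; for any $\eps>0$ there is a single return time $N'=N'(\eps)$ after which one can realize every phase shift on $E^c$ to within $\eps/2$ by choosing a suitable starting point on a preimage subtorus. Combining $N'$ with the uniform gap coming from the Bowen specification on the Anosov factor $E^h$ yields a constant $N(\eps)$ that works for all orbit segments, hence specification. Failure of \emph{periodic} specification follows from the rationality obstruction in Case (1): only countably many phases on $E^c$ are accessible to periodic points, so arbitrary segments on $E^c$ cannot be traced periodically.

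\emph{Case (3).} The ``if'' direction is Marcus's theorem: an $\eps$-gap function of the form $M_\eps(n)=C_\eps\log n$ suffices, because one only needs the central-phase adjustment to undo the polynomial spreading in $E^c$. The key point, and the main obstacle, is the negative direction: a Jordan block of size $k\ge 2$ with unitary eigenvalue makes $\|A^n|_{E^c}\|$ grow like $n^{k-1}$, so the image under $T^n$ of an $\eps$-ball is stretched in the central direction to diameter $\asymp \eps\, n^{k-1}$. Given a putative uniform gap $N=N(\eps)$, the set of phase adjustments on $E^c$ realizable in $N$ steps is bounded by a constant $C(\eps,N)$, independent of $n$. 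I would construct two orbit segments of length $n$ whose central phases are chosen to require a correction of order $n^{k-1}\eps$ that cannot be absorbed by $C(\eps,N)$ once $n$ is large; any $N$-spaced specification built from them cannot be $\eps$-traced, contradicting specification. The quantitative matching — pinning down ``how much phase correction is achievable in time $N$'' versus ``how much correction is required by the polynomial spreading'' — is the delicate step, and is exactly where the dichotomy between $M_\eps(n)=O(\log n)$ (allowed) and $M_\eps\equiv N$ (forbidden) is forced.
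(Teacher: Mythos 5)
First, note that the paper does not prove Theorem~\ref{thm:19} at all: it is presented as a compilation of results of Lind \cite{Lind82} and Marcus \cite{Marcus80}, so there is no in-paper argument to compare against. Judged on its own merits, your outline identifies the right spectral trichotomy and the right heuristic (isometric versus polynomially spreading behaviour on $E^c$), but it has genuine gaps at the two places where the real work lies.

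The first gap is your negative argument for periodic specification. Under Definition~\ref{def:spec} the tracing point $y$ satisfies $T^{b_n-a_0+N}(y)=y$, so its period grows with the total span of the specification; the candidate periodic points form the set $\ker(A^{p}-I)/\Z^n$ with $p=b_n-a_0+N$, whose cardinality $|\det(A^{p}-I)|$ grows with $p$. There is therefore no ``finite set of admissible phases on $E^c$ controlled by $N(\eps)$,'' and your contradiction does not go through as stated. Relatedly, $E^c$ is in general an irrational subspace of $\R^n$, so there is no invariant subtorus, no factor ``$T|_{E^c}$,'' and no ``minimal rotation'' on it; orbit segments cannot be restricted to $E^c$ and analysed there. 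The second gap is the positive direction in the central spin case, which is the main theorem of \cite{Lind82}: one must show that the set of central displacements realizable by perturbing the tracing point along the unstable directions and waiting $N$ steps becomes $\eps$-dense in the relevant central coordinates for some $N=N(\eps)$ depending only on $\eps$. Your sentence ``one can realize every phase shift on $E^c$ to within $\eps/2$ by choosing a suitable starting point on a preimage subtorus'' is precisely the statement to be proved, and it requires Lind's quantitative equidistribution/functional-equation argument (and, for the skew case, Marcus's refinement giving a gap function that is $o(n)$ but necessarily unbounded). Your obstruction in case (3) is the right idea --- a Jordan block forces $\|A^n|_{E^c}\|\asymp n^{k-1}$, so the $O(\eps)$ central correction affordable in a constant gap cannot absorb the required adjustment --- but as you acknowledge, the quantitative comparison between achievable and required corrections is exactly the delicate step, and it is not supplied.
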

Actually Marcus (see main theorem in \cite{Marcus80}) obtained a slightly stronger, periodic version of weak specification which allowed him to prove that for any quasi-hyperbolic toral automorphism $T$ the invariant measures supported on periodic points are dense in $\MT(X)$.

The above theorem shows that (periodic) specification and weak specification are different properties.

\begin{rem}
	Clearly, specification implies weak specification. We have explained above why the converse is not true.
\end{rem}

 Similar results hold for ergodic automorphisms of other compact metric groups. Here we mention only a result of Dateyama (see \cite{Dateyama91}) and refer the reader to references therein for more details and a more general statement for some nonabelian groups.

\begin{thm}[\cite{Dateyama91}, Corollary on p.345]
Let $X$ be a compact metric abelian group and $T$ be an automorphism of $X$. Then $(X, T)$ is ergodic with respect to Haar measure if and only if $(X, T)$ satisfies
%almost
weak specification.
\end{thm}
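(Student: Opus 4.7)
The plan is to prove the two implications separately. For the direction \emph{weak specification $\Rightarrow$ ergodicity}, I would invoke the result established earlier in the paper that every surjective system with weak specification is topologically mixing, and combine this with the classical fact that for an automorphism of a compact metric abelian group, topological mixing (equivalently, topological transitivity) is equivalent to ergodicity with respect to Haar measure. In spectral terms, both properties amount to the condition that every character $\chi \in \hat X \setminus \{0\}$ has an infinite $\hat T$-orbit.

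For the harder direction \emph{ergodicity $\Rightarrow$ weak specification}, I would reformulate the tracing problem using the group structure. Pick a translation-invariant compatible metric $\rho$ on $X$, obtained by averaging any compatible metric. Since $T$ is a continuous homomorphism, $\rho(T^k y, T^k x) = \rho(T^k(y-x), 0)$, so every Bowen ball is a translate of a Bowen ball centered at the identity: $B_{[a,b]}(x,\eps) = x + U_{[a,b]}$, where $U_{[a,b]} = B_{[a,b]}(0,\eps)$. Hence $\eps$-tracing a specification $\{T^{[a_j,b_j]}(x_j)\}_{j=1}^n$ is equivalent to the non-emptiness of $\bigcap_{j=1}^n (x_j + U_{[a_j,b_j]})$, and by translation invariance of Haar measure $\mu$ it suffices to verify
\[
\sum_{j=1}^n \bigl(1 - \mu(U_{[a_j,b_j]})\bigr) < 1.
\]

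The core technical step is then a lower bound on $\mu(U_{[a,b]})$. Every compact metric abelian group is an inverse limit of compact abelian Lie groups $X_k$, whose connected components are tori. Filtering $\hat X$ by finitely generated $\hat T$-invariant subgroups, $T$ descends to an automorphism $T_k$ of each $X_k$, and ergodicity of $T$ passes to each $T_k$. Marcus's theorem then yields weak specification for $T_k$, and in particular a lower bound of the form $\mu_k(B_n^{T_k}(0,\eps)) \ge c(\eps)/P_\eps(n)$ for some polynomial $P_\eps$. Pulling this back through the projections $\pi_k\colon X \to X_k$, and using that preimages of $\eps/2$-balls sit inside $\eps$-balls once $k$ is large enough, gives the required lower bound on $\mu(U_{[a,b]})$, hence the tracing point.

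The main obstacle is to assemble these estimates into a single $\eps$-gap function $M_\eps$ with $M_\eps(n)/n \to 0$. One cannot a priori bound the polynomial degrees on $X_k$ uniformly as $k$ grows; instead, for each fixed $\eps$ one must first choose a single sufficiently fine Lie quotient $X_k$ and then let the $o(n)$ gap function of $T_k$ (which Marcus controls even in the central-skew case) determine $M_\eps$ on $X$. Verifying that this polynomial decoupling actually drives the sum $\sum_j(1-\mu(U_{[a_j,b_j]}))$ below $1$ for appropriate gaps, and that the resulting $M_\eps$ remains sublinear, is the delicate heart of Dateyama's refinement of Marcus's toral argument, especially when the non-abelian-like obstructions from the finite factors of each Lie quotient $X_k$ must be absorbed.
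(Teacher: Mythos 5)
First, a caveat: the survey does not prove this theorem --- it is quoted from Dateyama's paper --- so there is no in-paper proof to compare against and your proposal has to stand on its own. The easy direction is fine: an automorphism is onto, weak specification then gives topological mixing by the theorem stated earlier in the paper, and for automorphisms of compact abelian groups topological transitivity, topological mixing and ergodicity with respect to Haar measure are all equivalent to the absence of finite $\hat T$-orbits in $\hat X\setminus\{0\}$.

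The hard direction, however, rests on a step that fails outright. Reducing tracing to the non-emptiness of $\bigcap_{j}\bigl(x_j+U_{[a_j,b_j]}\bigr)$ via a translation-invariant metric is correct, but the proposed criterion $\sum_{j}\bigl(1-\mu(U_{[a_j,b_j]})\bigr)<1$ is essentially never satisfiable: $U_{[a,b]}=B_{[a,b]}(0,\eps)$ is contained in $B(0,\eps)$, so each summand is at least $1-\mu(B(0,\eps))>0$ and the sum already exceeds $1$ for every specification of sufficiently large rank, even with segments of length one; worse, for long segments $\mu(U_{[a,b]})$ \emph{decays} (exponentially for a quasi-hyperbolic toral automorphism, since Kronecker's theorem forces an eigenvalue off the unit circle, so $\mu(B_n(0,\eps))$ is on the order of $\prod_{|\lambda_i|>1}|\lambda_i|^{-n}$), so each term is close to $1$, not to $0$. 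For the same reason the claimed polynomial lower bound $\mu_k(B_n(0,\eps))\ge c(\eps)/P_\eps(n)$ is false, and weak specification does not imply any lower bound on measures of Bowen balls. The mechanism that actually works is a decoupling statement --- after a gap of length $M_\eps(m)$ the forward image of one Bowen ball must meet every translated Bowen ball of length $m$, i.e.\ quantitative mixing of all orders with controlled gaps --- not a union bound in Haar measure. A second, independent obstruction: the reduction to compact abelian Lie quotients does not go through, because the $\hat T$-invariant subgroups of $\hat X$ generated by finitely many characters are finitely generated only as $\mathbb{Z}[\hat T^{\pm1}]$-modules, not as abelian groups. For the full shift on $(\mathbb{Z}/2)^{\mathbb{Z}}$ --- an ergodic automorphism of a totally disconnected compact abelian group --- every $T$-invariant Lie quotient is trivial, so Marcus's toral theorem cannot be bootstrapped along an inverse limit of Lie groups; the disconnected and connected parts require genuinely different arguments.
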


%The Krieger generator theorem says that every invertible ergodic measure-preserving system with finite measure-theoretic entropy can be embedded into a~full shift with strictly greater topological entropy. In \cite{QS} Quas and Soo generalized this claim to any dynamical system which has the weak specification property, the small boundary property and the asymptotic entropy expansiveness property

A dynamical system $(X,T)$ is called \emph{universal} if for every invertible, non-atomic, ergodic, and measure-preserving system $(Y,S,\mu)$ with the measure-theoretic entropy strictly less than the topological entropy of $T$ there exists a Borel embedding of $(Y, S)$ into $(X,T)$. It is \emph{fully universal} if one can, in addition, choose this embedding in such a~way that $\text{supp}(\mu^*)=X$, where $\mu^*$ denotes the push-forward of $\mu$. The Krieger theorem says that the full shift over a finite alphabet is universal.
Lind and Thouvenot \cite{LT78} proved that hyperbolic toral automorphisms are fully universal. This was recently extended by Quas and Soo, who proved the following theorem (we refer to \cite{QS} for terms not defined here).

\begin{thm}[\cite{QS}, Theorem 7] A self homeomorphism of a compact metric space is fully universal whenever it satisfies
\begin{enumerate}
  \item weak specification,
  \item asymptotic entropy expansiveness,
  \item the small boundary property.
\end{enumerate}
\end{thm}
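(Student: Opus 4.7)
The plan is to adapt the symbolic embedding strategy of Krieger's generator theorem, as generalized by Lind--Thouvenot, to the setting where the target system is only assumed to have weak specification, asymptotic entropy expansiveness and the small boundary property. The three hypotheses play complementary roles: the small boundary property provides a measure-theoretically clean symbolic coding of $X$; asymptotic entropy expansiveness keeps the topological entropy ``visible'' at bounded finite scales, so that counting of orbit segments is tight; weak specification is the concatenation mechanism that turns a combinatorial assignment of names into an actual orbit in $X$.

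First I would fix $\eps>0$ small and use the small boundary property to build a finite Borel partition $\mathcal P$ of $X$ into pieces of diameter less than $\eps$ whose boundaries form a universally null set, so that the topological $\mathcal P$-names faithfully describe points for every invariant measure on $X$, and so that $h_{\mathrm{top}}(T,\mathcal P)$ is close to $h_{\mathrm{top}}(T)$ (by asymptotic $h$-expansiveness, tail entropy at scale $\eps$ is negligible). Then, given the ergodic source $(Y,S,\mu)$ with $h_\mu(S)<h_{\mathrm{top}}(T)$, I would apply the Rokhlin--Halmos lemma to obtain for each large $n$ a tower $B,SB,\dots,S^{n-1}B$ of measure $>1-\delta$. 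By Shannon--McMillan--Breiman applied to a partition of $Y$ refining the desired coding, the number of $n$-names on the tower that carry almost all the mass is at most $e^{n(h_\mu(S)+\delta)}$, while the number of legal $(\mathcal P,n)$-names in $X$ grows like $e^{n\cdot h_{\mathrm{top}}(T,\mathcal P)}$.

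Next I would choose an injective combinatorial assignment from $Y$-names to $X$-names, reserving a small fraction of $X$-names as ``dust'' that visits a prescribed countable base of open sets in $X$ (so the image will be dense). The assignment is then realized dynamically: for $\mu$-typical $y\in Y$, successive returns to $B$ cut the $S$-orbit of $y$ into blocks of length roughly $n$, and to each block I assign the chosen $(\mathcal P,n)$-name, picking a representative orbit segment in $X$. Weak specification with gap function $M_\eps$ concatenates these segments into a single $\eps$-tracing orbit; because $M_\eps(n)/n\to 0$, the inserted gaps contribute zero exponential-growth rate, so the entropy budget $h_\mu(S)+\delta<h_{\mathrm{top}}(T,\mathcal P)$ is preserved and the construction goes through. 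Running the construction on a sequence $\eps_k\to 0$ with refining partitions, and using a standard diagonal/Cauchy argument, produces the desired Borel equivariant map $\phi\colon Y\to X$.

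Finally I would verify that $\phi$ is an equivariant Borel injection $\mu$-almost everywhere and that $\phi_*\mu$ has full support: injectivity follows because distinct $Y$-names are assigned to distinct $X$-names, and asymptotic $h$-expansiveness guarantees that two orbits sharing all $\mathcal P_k$-itineraries for a refining sequence $\mathcal P_k$ must coincide modulo a universally null set; full support is built in through the ``dust'' orbits. The most delicate point, and the main obstacle, is precisely this last injectivity/expansiveness step: asymptotic entropy expansiveness is weaker than genuine expansiveness, so points with identical names need not coincide a priori, and one must control the tail-entropy error produced by the sublinear weak-specification gaps simultaneously with the refinement $\mathcal P_k$. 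The counting in this step must be done at the level of ergodic invariant measures on $X$, using that the push-forward is ergodic and enjoys the Shannon--McMillan property, and then applying upper semicontinuity of entropy at the chosen scale (the content of asymptotic $h$-expansiveness) to rule out entropy being lost into arbitrarily small Bowen balls.
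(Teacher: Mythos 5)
The survey does not prove this theorem; it only quotes it from Quas--Soo \cite{QS}, so there is no in-paper argument to compare against. Judged on its own terms, your sketch correctly identifies the division of labour among the three hypotheses and the overall Krieger--Lind--Thouvenot strategy (Rokhlin towers on the source, Shannon--McMillan--Breiman counting, concatenation of representative orbit segments via weak specification with sublinear gaps, ``dust'' blocks to force full support). That much is faithful to the actual architecture of the Quas--Soo proof.

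However, the step you yourself flag as delicate --- injectivity of the limit map --- is where the argument as written breaks down, and the fix is not the one you propose. You assert that asymptotic entropy expansiveness ``guarantees that two orbits sharing all $\mathcal P_k$-itineraries for a refining sequence $\mathcal P_k$ must coincide modulo a universally null set.'' Asymptotic $h$-expansiveness only says that the tail entropy $h^*(\eps)$ tends to $0$ as $\eps\to 0$; it does not make any refining sequence of partitions generating for every invariant measure, and in particular it does not let you separate points of $X$ with equal itineraries. Its actual role is in the \emph{counting}: it ensures that at some fixed finite scale $\eps$ the number of $(n,\eps)$-separated orbit segments already grows at rate close to $h_{\mathrm{top}}(T)$, so that the entropy gap $h_\mu(S)<h_{\mathrm{top}}(T)$ yields strictly more target names than source names \emph{at that one scale}, with room left over for markers. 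Injectivity must then be built into the coding itself: one reserves part of the name to carry marker/synchronization information and enough of the source $n$-name that the point $y$ can be \emph{decoded} from the $\mathcal P$-itinerary of $\phi(y)$; the map is injective because it is invertible by construction, not because itineraries separate points of $X$. Relatedly, your ``standard diagonal/Cauchy argument'' over $\eps_k\to 0$ needs real care: the weak-specification gap functions $M_{\eps_k}$ degrade as $\eps_k\to 0$, so the block lengths $n_k$, the scales $\eps_k$, and the measure of the sets on which stage $k$ modifies stage $k-1$ must be chosen jointly so that the successive approximations stabilize almost everywhere and injectivity survives the limit. Without these two ingredients the proposal is a plausible outline rather than a proof.
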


Benjy Weiss (personal communication) has proved that the second assumption above (asymptotic entropy expansiveness) is not necessary. He also has a version of this result for $\Z^d$ actions. Universality of $\Z^d$-actions was also a subject of \cite{RS01}. % where an appropriate notion of specification was also used.

\section{Almost specification}

Another specification-like notion is the \emph{almost specification property}. Pfister and Sullivan introduced it as the \emph{$g$-almost
product property} in \cite{PS05}. Thompson \cite{Thompson12} used a slightly modified definition and renamed it the \emph{almost specification property}.
$\beta$-shifts are model examples of dynamical systems with the almost specification property (see \cite{CT12,PS05}).
Here we follow Thompson's approach, hence the almost specification property presented below is
a priori weaker (less restrictive) than the notion introduced by Pfister and Sullivan.

\begin{defn}
We say that $g \colon \Zp\times(0,\eps_0)\to\N$, where $\eps_0>0$ is a \emph{mistake function} if for all $\eps<\eps_0$  and all
$n \in\Zp$  we have $g(n, \eps) \le g(n + 1, \eps)$ and
\[
\lim_{n\to \infty}
\frac{g(n, \eps)}{n}= 0.
\]
Given a mistake function $g$ we define a function $k_g\colon (0,\infty) \to \N$ by
declaring $k_g(\eps)$ to be the smallest $n\in\N$ such that $g(m,\eps)<m\eps$ for all $m\ge n$.
\end{defn}
%Given a mistake function $g$, $0<\eps<\eps_0$ and $a,b\in\Zp$ with $a\le b$ such that
%$n=b-a+1\ge k_g(\eps)$ %is large enough for the inequality $g(n, \eps) < n$ to hold
%we define the set
%\[
%I(g; n, \eps,[a,b]) := \{\Lambda\subset [a,b]\cap\Zp : \#\Lambda \geq n - g(n,\eps)\}.
%\]
\begin{defn}
Given a mistake function $g$, $0<\eps<\eps_0$ and $n\ge k_g(\eps)$ %large
%enough for the inequality $g(n, \eps) < n$ to hold
we define the set
\[
I(g; n, \eps) := \{\Lambda\subset \set{0,1,\ldots, n - 1} : \#\Lambda \ge n-g(n,\eps)\}.
\]
\end{defn}

We say that a point $y\in X$ $(g;\eps,n)$-traces an orbit segment $T^{[a,b]}(x)$ % over $[c,d]$ %with the mistake function $g$
%if $n= b-a+1=d-c+1$, $k_g(\eps)\ge n$ and
if for some $\Lambda\in I(g;n,\eps)$ we have
$\rho^T_\Lambda(T^a(x),T^a(y))\le\eps$. By $B_n(g;x,\eps)$ we denote the set of all points which
$(g;\eps,n)$-trace an orbit segment $T^{[0,n)}(x)$. %over $[0,n)$.
Note that $B_n(g;x,\eps)$ is always closed and nonempty.

\begin{defn}
A dynamical system $(X,T)$ has the \emph{almost specification
property} if there exists a mistake function $g$ such that
for any $m\geq 1$, any $\eps_1,\ldots,\eps_m > 0$,
and any specification $\{T^{[a_j,b_j]}(x_j)\}_{j=1}^m$ with $b_j-a_j+1\ge k_g(\eps_j)$ for every $j=1,\ldots,m$
we can find a point $z\in X$ which $(g;b_j-a_j+1,\eps_j)$-traces the orbit segment
$T^{[a_j,b_j]}(x_j)$ for every $j=1,\ldots,m$.
\end{defn}
In other words,
the appropriate part of the orbit of $z$ $\eps_j$-traces with at most $g(b_j-a_j+1,\eps_j)$ mistakes the orbit of $x_j$ over $[a_j,b_j]$.

\begin{rem}
Pfister and Sullivan \cite[Proposition 2.1]{PS07} proved that the specification property implies the $g$-almost product property with any mistake function $g$. The proof can be easily adapted to show that the specification property implies the almost specification property. The converse is not true because for every $\beta>1$ the $\beta$-shift $X_\beta$ has the almost specification property with a mistake function $g(n)=1$ for all $n\in\N$ (see \cite{PS07}), while the set of $\beta>1$ such that $X_\beta$ has the specification property has Lebesgue measure zero \cite{Buzzi97,Schmeling97}. We recall that $\beta$-shifts are symbolic encodings of the $\beta$-transformations $x\mapsto \beta x \bmod 1$ on $[0,1]$. %(see \cite{PS05,CT12} for more information on these shift spaces).
Given $\beta>1$ find a sequence $\{b_j\}_{j=1}^\infty$ with $0\le b_j <\beta$ such that
\[
1=\sum_{j=1}^\infty \frac{b_j}{\beta^j},
\]
where the $j$th ``digit'' of the above $\beta$-expansion of $1$ is given by
\[
b_j=\lfloor \beta\cdot T_\beta^{j-1}(1)\rfloor, \quad\text{ where}\quad T_\beta(x)=\beta x -\lfloor\beta x\rfloor =\beta x\bmod 1 \text{ for }x\in[0,1].
\]
If $\{b_j\}_{j=1}^\infty$ is not finite, that is, it does not end with a sequence of zeros only, then the $\beta$-shift is the set $X_\beta$ of all infinite sequences $x$ over the alphabet $\{0,1,\ldots,\lfloor \beta\rfloor\}$ such that $\sigma^k(x)<\{b_j\}_{j=1}^\infty$ lexicographically for each $k>0$.
If
\[\{b_j\}_{j=1}^\infty=i_1,\ldots,i_m,0,0,\ldots,\] then $x\in X_\beta$ if and only if
\[
\sigma^k(x)< i_1,\ldots,i_{m-1},(i_m-1), i_1,\ldots,i_{m-1},(i_m-1), i_1,\ldots
\]
lexicographically for each $k>0$ (see \cite{Parry60}).
This notion was introduced by R\'{e}nyi in \cite{R}. For more details see \cite{Blanchard89, Parry60, Thomsen05}.

 %We have explained above why the converse is not true.
\end{rem}

As noted above, the almost specification property of $(X,T)$ does not imply surjectivity of $T$. Furthermore, $(X,T)$ has the almost specification property
if and only if it has the same property when restricted to the measure center (see \cite[Theorem 6.7.]{WOC} or \cite[Theorem 5.1.]{KKO} for a proof). As a consequence, almost specification property alone does not imply any recurrence property like transitivity or mixing (see \cite{KKO}).
But the restriction of a system with the almost specification property to the measure center must be weakly mixing (see \cite{KKO}). We do not know whether one can conclude that almost specification implies mixing on the measure center.

Thompson \cite{Thompson12} used the almost specification property to study the irregular set of a dynamical system $(X,T)$. Given a continuous function $\varphi \colon  X \to \R$ we consider the \emph{irregular set} for $\varphi$ defined by
$$
\hat{X}(\varphi,T):= \left\{x\in X: \lim_{n\to \infty}\frac{1}{n}\sum_{i=0}^{n-1}\varphi(T^i(x)) \text{ does not exist}\right\}.
$$
Some authors call it the \emph{set of points with historic behaviour}. It is meant to stress that these points witness the history of the system and record the fluctuations, while points for which the limit exists capture only the average behaviour. The set $\hat{X}$ is the natural object of study of multifractal analysis. Although it is not detectable from the point of view of ergodic theory (it follows from Birkhoff's ergodic theorem that $\hat{X}$ is a universally null set) it can be large from the point of view of dimension theory. There is a vast literature on this topic, see \cite{EKL, Olsen03b, OW03} to mention only a few contributions.
Thompson's main result (see below) says that the irregular set of a system with the almost specification property is either empty or has full topological entropy. In this statement entropy is the Bowen's dimension-like characteristic of a non necessarily compact, nor invariant set $A\subset X$ denoted by $\htop(A,T)$ (see \cite[Definition 3.7]{Thompson12} or \cite{Pesin} for more details).

\begin{thm}[\cite{Thompson12}, Theorem 4.1]
Let $(X, T)$ be a dynamical system with the almost specification property. If a continuous function $\varphi \colon X\to \R$ satisfies
$$
\inf_{\mu \in \MT(X)}\int \varphi d\mu <\sup_{\mu \in \MT(X)}\int \varphi d\mu
$$
then $\htop(\hat{X}(\varphi,T),T)=\htop(T)$.
\end{thm}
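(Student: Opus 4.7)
The plan is to follow the classical ``saturation'' strategy used by Takens--Verbitskiy and Pfister--Sullivan for the specification property, adapted to accommodate the mistake function $g$. The idea is to build a large Moran-like Cantor subset $F\subset\hat X(\varphi,T)$ by concatenating orbit segments that alternately approximate two invariant measures with distinct $\varphi$-integrals, and then estimate $\htop(F,T)$ from below by $\htop(T)$.

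First I would choose the two target measures. Since $\mu\mapsto\int\varphi\,d\mu$ is a continuous affine functional on the simplex $\MT(X)$, its extrema are attained at extreme points, i.e.\ on $\MTe(X)$. The hypothesis therefore yields two ergodic measures $\nu_1,\nu_2$ with $\int\varphi\,d\nu_1\ne\int\varphi\,d\nu_2$. Next, using the entropy density of ergodic measures for systems with the almost specification property (Pfister--Sullivan), together with affinity of measure-theoretic entropy and continuity of $\mu\mapsto\int\varphi\,d\mu$, I can modify $\nu_1,\nu_2$ into ergodic measures $\mu_1,\mu_2$ whose $\varphi$-integrals are still distinct and whose entropies both exceed $\htop(T)-\delta$, where $\delta>0$ is fixed but arbitrarily small.

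Second, I would use Katok's entropy formula for ergodic measures: for every large $n$ there is an $(n,\eps)$-separated set $\Gamma_i(n)$ of cardinality at least $\exp\bigl(n(h_{\mu_i}(T)-\delta)\bigr)$, consisting of points whose empirical measures $\Emp(x,n)$ are $\D$-close to $\mu_i$. Now pick a rapidly increasing sequence of lengths $n_k\ge k_g(\eps)$ and an alternating sequence of indices $i_k\in\{1,2\}$, and for every choice $(y_k)\in\prod_k\Gamma_{i_k}(n_k)$ apply the almost specification property to the specification $\{T^{[a_k,b_k]}(y_k)\}_k$ (with $b_k-a_k+1=n_k$ and gaps $a_{k+1}-b_k=k_g(\eps)$) to obtain a tracing point $z=z((y_k))$. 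Let $F$ be the set of all such $z$. Two distinct choices $(y_k)\ne(y'_k)$ produce tracing points that are $(N,\eps/2)$-separated for a suitable $N$ dictated by the first index where the choices differ, because the $(g;n,\eps)$-tracing forces agreement on all but a vanishing fraction of coordinates.

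The main obstacle is to simultaneously ensure (a)~that every $z\in F$ actually lies in $\hat X(\varphi,T)$ and (b)~a sharp lower bound on $\htop(F,T)$. For (a), the Birkhoff average $\tfrac{1}{b_k}S_{b_k}\varphi(z)$ differs from $\int\varphi\,d\mu_{i_k}$ by a quantity controlled by $\eps$, by $\|\varphi\|_\infty\cdot g(n_k,\eps)/n_k$ (from the mistakes), by $\|\varphi\|_\infty\cdot k_g(\eps)\cdot k/b_k$ (from the gaps), and by the Katok closeness of $\Emp(y_k,n_k)$ to $\mu_{i_k}$. Choosing $n_k$ to grow fast enough (e.g.\ $n_{k+1}\ge k\sum_{j\le k}n_j$) makes every term but the $O(\eps)$ contribution negligible, and forces the sequence of Birkhoff averages to oscillate near two distinct values, so the limit does not exist. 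For (b), counting the branches of the Moran construction gives $\prod_{k\le K}|\Gamma_{i_k}(n_k)|$ pairwise $(\sum_{k\le K}(n_k+k_g(\eps)),\eps/2)$-separated points, which by the standard volume-lemma/Bowen dimension argument yields
\[
\htop(F,T)\ \ge\ \min(h_{\mu_1}(T),h_{\mu_2}(T))-c(\eps,\delta)\ \ge\ \htop(T)-\delta-c(\eps,\delta).
\]
Letting first $\eps\downarrow 0$ and then $\delta\downarrow 0$ completes the proof, since $\htop(\hat X(\varphi,T),T)\ge\htop(F,T)$ and the reverse inequality $\htop(\hat X(\varphi,T),T)\le\htop(T)$ is automatic.
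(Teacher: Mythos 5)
The survey itself does not prove this theorem --- it only cites Thompson --- so the comparison below is with Thompson's actual argument. Your overall strategy (two ergodic measures with distinct $\varphi$-integrals and nearly maximal entropy obtained from entropy density, a Moran-type Cantor set of tracing points alternating between them, membership in $\hat X(\varphi,T)$ via rapidly growing block lengths, and a lower bound on Bowen entropy via an entropy distribution principle) is indeed the strategy of Thompson's proof, which in turn follows Takens--Verbitskiy and Pfister--Sullivan. Your bookkeeping for step (a), showing the Birkhoff averages oscillate, is essentially correct.

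There is, however, a genuine gap in step (b), located exactly where almost specification differs from specification. You take $\Gamma_i(n)$ to be an ordinary $(n,\eps)$-separated set supplied by Katok's entropy formula and then assert that distinct choices $(y_k)\neq(y_k')$ yield separated tracing points ``because the $(g;n,\eps)$-tracing forces agreement on all but a vanishing fraction of coordinates.'' This does not follow. Two points of an $(n,\eps)$-separated set are only guaranteed to be $\eps$-apart at a \emph{single} time $j<n$; the tracing points $z$ and $z'$ are each permitted $g(n,\eps)$ mistakes, and nothing prevents that one time $j$ from being a mistake coordinate for both, in which case $z$ and $z'$ need not be separated at all. Propagated through the construction, a single Bowen ball could a priori absorb exponentially many branches of your Cantor set, and the final counting estimate collapses. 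The missing ingredient --- which is the main technical content of the Pfister--Sullivan/Thompson arguments --- is a Hamming-metric strengthening of Katok's formula: for an ergodic $\mu$ and suitable $\delta,\eps>0$ there exist sets of cardinality at least $e^{n(h_\mu(T)-\gamma)}$ that are $(\delta,n,\eps)$-separated, i.e.\ any two of their points are $\eps$-apart at no fewer than $\delta n$ times in $\{0,\dots,n-1\}$. Since $g(n,\eps)/n\to 0$, for large $n$ one has $2g(n,\eps)<\delta n$, so the two mistake sets cannot cover all the separating coordinates and the tracing points stay separated; only then does your product lower bound on the number of separated points, and hence the entropy distribution argument, go through. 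As written, your proof is incomplete without this lemma.
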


%The following condition was introduced by B. Marcus in \cite{}.

%Intuitively, it should come as no surprise that almost specification does not imply the weak specification.
%But we did not expect at first that the converse implication is also false.

\section{Almost and weak specification}
%{\color{blue} Results of KOR - there is no connection, results of KOR and Pavlov no intrinsic ergodicity}

It is natural to ask whether the weak or almost specification property implies intrinsic ergodicity. Moreover, the definition of these properties might suggest that weak specification implies almost specification. The problem of intrinsic ergodicity of shift spaces with almost specification was mentioned in
\cite[p. 798]{CT12}, where another approach was developed in order to prove that certain classes of symbolic systems and their factors are intrinsically ergodic. It turns out that there are shift spaces with the weak (almost) specification property and many measures of maximal entropy. Moreover, there is no connection between the almost and the weak specification property. This was discovered  independently by Pavlov \cite{Pavlov14} and the authors of \cite{KOR}. In the latter paper there is a construction of a family of shift spaces, which contains:
\begin{enumerate}
               \item A shift space with the almost specification property and finite number of measures of maximal entropy concentrated on disjoint nowhere dense subsystems.
               \item A shift space with the weak specification  property and finite number of measures of maximal entropy concentrated on disjoint nowhere dense subsystems.
               \item A shift space with the almost specification property but without weak specification.
               \item Shift spaces $X$ and $Y$  satisfying
               \begin{enumerate}
               \item $Y$ is a factor of $X$,
                 \item their languages possess the Climenhaga-Thompson decomposition (see \cite{CT12})
               $\lang(X)=\CpX\cdot\G_X\cdot\CsX$ and $\lang(Y)=\CpY\cdot\G_Y\cdot\CsY$,
                 \item $h(\G_X)>h(\CpX\cup\CsX)$ and $h(\G_Y) < h(\CpY\cup\CsY)$,
                 \item $X$ is intrinsically ergodic, while $Y$ is not.
               \end{enumerate}
\end{enumerate}
This construction proves that the sufficient condition for the inheritance of intrinsic ergodicity by factors from the Climenhaga-Thompson paper \cite{CT12} is optimal --- if this condition does not hold, then the symbolic systems to which Theorem of \cite{CT12} applies may have a factor with many measures of maximal entropy. We refer the reader to \cite{CT12, KOR} for more details. It is also proved in \cite{KOR} that nontrivial dynamical systems with the almost specification property and a full invariant measure have uniform positive entropy and horseshoes (subsystems which are extensions of the full shift over a finite alphabet). Since $(X,T)$ has the almost specification property
if and only if it has the same property when restricted to the measure center (see \cite[Theorem 6.7.]{WOC} and \cite[Theorem 5.1.]{KKO}),
it follows that  minimal points are dense in the measure center, thus a minimal system with the almost specification property must be trivial.

It follows from \cite{Pavlov14, KOR} that for any positive nondecreasing function $f\colon\N\to\Zp$ with %$f(n)/n\to 0$ as $n\to\infty$ and
\[
\lim_{n\to \infty}\frac{f(n)}{n}=0 \quad \text{ and } \quad \liminf_{n\to\infty}\frac{f(n)}{\ln n} >0,
\]
there exists a shift space, which has the weak specification property with the gap function $f(n)$ and at least two measures of maximal entropy, whose supports are disjoint.
In \cite{KOR} it is shown that the same condition as for the gap function suffices for the existence of a shift space with the almost specification property, the mistake function $f$, and many measures of maximal entropy. Pavlov \cite{Pavlov14} proves that even a constant mistake function $g(n) = 4$ can not guarantee intrinsic ergodicity. He also shows that if the mistake or the gap function grows sufficiently slowly, then the shift cannot have
two measures of maximal entropy with disjoint supports. %Strictly speaking, he proves the following.
\begin{thm}[\cite{Pavlov14}, Theorems 1.3--4]
If a shift space $X$ has either
\begin{enumerate}
\item the weak specification property with the gap function $f$ satisfying
\[
\liminf_{n\to\infty}\frac{f(n)}{\ln n} =0, \text{ or}
\]
\item the almost specification property with the mistake function $g(n) = 1$,
\end{enumerate}
then it cannot have two measures of maximal entropy
with disjoint support.
\end{thm}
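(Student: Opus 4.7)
The plan is to argue by contradiction: assume $\mu_1,\mu_2\in\MT(X)$ are measures of maximal entropy with disjoint supports $K_i:=\supp\mu_i$, and use the specification-like property to produce more distinct admissible words in $X$ than the topological entropy allows. By the variational principle each $K_i$ is a subshift of $X$ with $\htop(K_i)=h:=\htop(X)$, so subadditivity of the log-complexity yields $|\lang_n(K_i)|\geq e^{nh}$ for every $n$. Since $K_1,K_2$ are disjoint closed subsets of a shift space, there is an $n_0$ such that $\lang_n(K_1)\cap\lang_n(K_2)=\emptyset$ for all $n\geq n_0$; hence any block in the language of either $K_i$ is unambiguously tagged by its label $i\in\set{1,2}$.

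For case (1), I would fix $n\geq n_0$ and use weak specification to glue $K$ blocks into a single admissible word. For any label sequence $(b_1,\ldots,b_K)\in\set{1,2}^K$ and any realizations $a_i\in\lang_n(K_{b_i})$, weak specification supplies gap words $w_j$ of length $f(n)$ with $a_1 w_1 a_2 w_2\cdots w_{K-1} a_K\in\lang_N(X)$ where $N=Kn+(K-1)f(n)$. Fixed block positions together with label recoverability make this assignment injective, so $|\lang_N(X)|\geq 2^K e^{Knh}$. Letting $K\to\infty$ with $n$ fixed and using $N^{-1}\log|\lang_N(X)|\to h$ forces $\log 2+nh\leq(n+f(n))h$, that is $f(n)\geq(\log 2)/h$.

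The bound just obtained is a uniform positive constant, which is too weak to contradict $\liminf_{n\to\infty} f(n)/\ln n=0$; negating the latter would require $f(n)\geq c\log n$ for $n$ large. The main obstacle is therefore to sharpen the construction so that it packs $\Omega(\log n)$ bits per block rather than the single label-bit used above. I would try two refinements: (i) let block lengths $m_i$ vary over $\set{n_0,\ldots,n}$, contributing up to $\log n$ bits per block from the length choice, provided one can recover the parse $(m_i,b_i,a_i)$ from the output (which requires self-delimiting structure compatible with weak specification); or (ii) exploit the multiplicity of admissible gap words between consecutive blocks, since weak specification only guarantees existence but does not forbid exponential freedom--if this multiplicity is on average of size $e^{\Omega(f(n))}$, it enters the count directly. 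Either refinement, fed back into the entropy comparison, should force $\log n\leq O(f(n))$ for $n$ large, contradicting the $\liminf$ hypothesis.

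For case (2) the strategy is parallel: almost specification with $g\equiv 1$ (at a suitably small scale $\eps$) produces, for any $a_1 a_2\cdots a_K$ with $a_i\in\lang_n(K_{b_i})$, a word $z\in\lang_{Kn}(X)$ that agrees with each $a_i$ outside at most one coordinate per block. Bounding the number of $(b_i,a_i)$-preimages of a given $z$ by a multiplicity $C(n)^K$--where $C(n)$ is the maximum number of elements of $\lang_n(K_1)\cup\lang_n(K_2)$ lying within one symbol of a prescribed $n$-block--gives $|\lang_{Kn}(X)|\geq (2/C(n))^K e^{Knh}$, and the desired contradiction follows once $C(n)<2$ for some large $n$. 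The key technical obstacle in both parts is thus the fine multiplicity/injectivity estimate: in (1), encoding genuinely $\log n$ extra bits per block without destroying recoverability of the parse; in (2), showing that with $\eps$ chosen so that $(g;n,\eps)$-tracing on the shift metric forces near-exact agreement outside one coordinate, the admissible perturbation cannot flip a block of $\lang_n(K_1)$ into $\lang_n(K_2)$. Both rigidity statements ultimately rest on the positive distance between $K_1$ and $K_2$ and on a careful choice of the tracing scale relative to $n_0$.
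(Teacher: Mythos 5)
First, note that the survey you are reading does not prove this theorem; it is quoted from Pavlov's paper \cite{Pavlov14}, so there is no in-paper argument to compare against, and your proposal must stand on its own. Judged that way, it is a correct setup followed by an acknowledged and genuine gap, not a proof. The preliminary steps are fine: the supports $K_1,K_2$ of two measures of maximal entropy are subshifts of full entropy $h$, Fekete subadditivity gives $|\lang_n(K_i)|\ge e^{nh}$ for all $n$, disjointness plus compactness gives $\lang_n(K_1)\cap\lang_n(K_2)=\emptyset$ for large $n$, and the concatenation count with fixed block length $n$ and parse recoverable from fixed positions is valid. But, as you yourself observe, this only yields $f(n)h\ge\log 2$ for all large $n$ (and an outright contradiction when $h=0$), which is perfectly compatible with $\liminf_{n\to\infty} f(n)/\ln n=0$ --- indeed a bounded gap function satisfies that hypothesis. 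Everything that makes the theorem true at the $\ln n$ threshold is contained in the refinements you defer, and those are precisely the substance of Pavlov's argument.

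Concretely: refinement (i), letting block lengths vary so as to harvest roughly $\log n$ bits per block from the choice of parse, is the right heuristic for why $\ln n$ is the critical scale, but once the lengths vary the block positions are no longer determined by the output word, injectivity fails, and you must instead bound the number of parses consistent with a given word --- a delicate multiplicity estimate you have not supplied. Refinement (ii) is a dead end: weak specification guarantees the existence of one admissible gap word, not $e^{\Omega(f(n))}$ of them, so there is no entropy to harvest from the gaps. For case (2), your own reduction shows you would need $C(n)<2$, i.e.\ that no $n$-block has two distinct words of $\lang_n(K_1)\cup\lang_n(K_2)$ within Hamming distance one of it; since each $K_i$ packs at least $e^{nh}$ words of length $n$ into $|\mathscr{A}|^n$ possibilities, this code-like separation property has no reason to hold and will typically fail, so the almost-specification case needs a genuinely different idea rather than a sharper constant in the same count. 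In short, the skeleton is sound and the obstacles are correctly named, but the load-bearing steps are missing.
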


\section{Approximate product property}\label{sec:app}
Pfister and Sullivan \cite[Definition 4.2]{PS05} introduced the following weaker form of the specification property. \begin{defn}%[]
We say that a dynamical system $(X,T)$ has the \emph{approximate product structure} if for any $\eps>0$, $\delta_1>0$ and $\delta_2>0$ there exists an integer $N>0$ such that for any $n\geq N$ and $\{x_i\}_{i=1}^{\infty} \subset X$ there are $\{h_i\}_{i=1}^{\infty}\subset\Zp$ and $y\in X$ satisfying $h_1=0$, $n\leq h_{i+1}-h_i\leq n(1+\delta_2)$ and $${ \left|\big\{0\leq j<n\,:\,\rho\big(T^{h_i+j}(y),T^j(x_i)\big)>\eps\big\}\right|\leq \delta_1} n\text{ for all } i\in\N.$$
\end{defn}
%{\color{red}Ten niebieski fragment ponizej to dla mnie jakies bla bla; ale moze ma sens}
%{\color{blue}
The thermodynamic behaviour of
a dynamical system with the approximate product structure is a consequence of the large scale structure of the orbit space of the system, which is essentially the product of weakly interacting large subsystems. %}
Pfister and Sullivan refer to the notion of
an asymptotically decoupled probability measure introduced in \cite{Pfister02} in the context of statistical
mechanics as an inspiration for their definition.
They used almost product structure to obtain large deviations results, which
were previously proven for dynamical systems with the specification property in \cite{EKW}. They achieved it by proving first that the approximate product property is strong enough to imply entropy-density of ergodic measures.
\begin{rem}
It is clear that the weak (almost) specification property implies the approximate product property. We demonstrate below why neither converse is true.
\end{rem}

%{\color{blue} no connection}

We observe that the approximate product property is equivalent to transitivity  for systems with the shadowing property. Thus every transitive system with shadowing is an example of a system with the approximate product property. Readers not familiar with the definition of the shadowing property will find it in the next section.

\begin{thm}\label{thm:shd:aps}
	Assume that $(X,T)$ has the shadowing property. The following conditions are equivalent:
	\begin{enumerate}
		\item $(X,T)$ is transitive,\label{shd:aps:1}
		\item $(X,T)$ has the approximate product property.\label{shd:aps:2}
	\end{enumerate}
\end{thm}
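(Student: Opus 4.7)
The plan is to prove the two implications separately: the implication $(2) \Rightarrow (1)$ is the easier one, requiring only a direct application of the approximate product property together with shadowing, while $(1) \Rightarrow (2)$ is the main content and requires structural input from the spectral decomposition of transitive shadowing systems.

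For $(2) \Rightarrow (1)$: Let $U, V \subset X$ be non-empty open, pick $x \in U$, $y \in V$ and $\eps > 0$ with $B(x, 2\eps) \subset U$ and $B(y, 2\eps) \subset V$. Let $\delta$ be the shadowing modulus for $\eps$, and let $\eta < \delta$ so that $\rho(u, v) < \eta$ implies $\rho(T(u), T(v)) < \delta$. Apply the approximate product property to $x_1 = x$, $x_2 = y$, $x_3, \ldots$ with parameters $\eta$, $\delta_1 = 1/2$, $\delta_2 = 1$; for $n$ sufficiently large this yields a point $z$, an index $h_2 \geq n$, and good indices $j_1, j_2 \in [0, n)$. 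The concatenation
\[
x, T(x), \ldots, T^{j_1}(x),\ T^{j_1 + 1}(z), \ldots, T^{h_2 + j_2}(z),\ T^{j_2 + 1}(y), \ldots, T^{n-1}(y)
\]
is a $\delta$-pseudo-orbit: the only non-trivial transitions are the two junction jumps, each bounded via uniform continuity of $T$ applied to the $\eta$-closeness at the good indices. Shadowing yields $w \in B(x, \eps) \subset U$ whose forward orbit passes through the $2\eps$-neighborhood of $T^{j_2}(y)$. To move this visit into $V$ itself, one may pass to the natural extension (on which both the approximate product property and shadowing are preserved and the map is a homeomorphism) and use the uniform continuity of $\sigma_T^{-j_2}$ to pull the visit back to a neighborhood of $y$; this establishes $T^k(w) \in V$ for an appropriate $k$, hence transitivity.

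For $(1) \Rightarrow (2)$: The strategy is to build a $\delta$-pseudo-orbit that \emph{exactly} traces each segment $T^{[0, n)}(x_i)$, interspersed with short bridging pseudo-orbits, and then shadow it. Because the shadowing accuracy can be taken equal to $\eps$, the resulting point $z$ traces each $x_i$ with no mistakes at all, thereby satisfying the approximate product condition with room to spare. The crucial combinatorial ingredient is a uniform bridge-length bound. Invoking the standard spectral decomposition for transitive systems with shadowing, write $X = X_0 \cup \ldots \cup X_{p-1}$ cyclically permuted by $T$, with $T^p$ topologically mixing on each $X_i$. Mixing of $T^p|_{X_i}$ on the compact set $X_i$, together with a finite cover argument, produces $S_0 = S_0(\delta)$ such that for every $a \in X_i$, $b \in X_j$ and every $s \geq S_0$ there exists a $\delta$-pseudo-orbit from $a$ to $b$ of length exactly $((j - i) \bmod p) + p s$. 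Setting $N = \lceil p(S_0 + 2)/\delta_2 \rceil$, for $n \geq N$ each bridge from $T^n(x_i)$ to $x_{i+1}$ can be chosen with length in $[p S_0, p S_0 + 2 p] \subset [0, n \delta_2]$, giving $n \leq h_{i+1} - h_i \leq n(1 + \delta_2)$ as required.

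The main obstacle is the uniform chain-mixing bound used in the $(1) \Rightarrow (2)$ direction. It rests on the nontrivial spectral decomposition of transitive systems with shadowing, and on a compactness argument upgrading pointwise topological mixing of $T^p|_{X_i}$ to a uniform transition time $S_0(\delta)$; both steps are classical but non-immediate. Once they are in hand, the construction of the pseudo-orbit and its shadowing are entirely routine, and $(2) \Rightarrow (1)$---modulo the natural-extension move to handle non-invertible $T$---is a direct concatenation argument.
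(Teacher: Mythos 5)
Both implications in your proposal have genuine gaps, and in each case the paper takes a different (and in one case much simpler) route.

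In $(2)\Rightarrow(1)$ the fatal step is ``pull the visit back to a neighborhood of $y$.'' Your shadowing point $w$ starts in $U$ and its orbit enters the $2\eps$-neighborhood of $T^{j_2}(y)$, where $j_2$ is a \emph{good} index of the second window; but the approximate product property allows the index $0$ to be a mistake, so you cannot arrange the pseudo-orbit to pass near $y$ itself, only near some unspecified iterate of $y$. The natural-extension fix does not repair this: the modulus of continuity of $\sigma_T^{-j_2}$ degrades as $j_2$ grows, and $j_2$ ranges over $[0,n)$ with $n$ forced to be large \emph{after} you have already fixed $\eta$, so no admissible choice of $\eta$ works; moreover, closeness of $\sigma_T^{h_2}(\tilde w)$ to a lift of $y$ would require the pseudo-orbit to be close to $y$ at time $h_2$, which is exactly the statement you cannot guarantee. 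This is why the paper argues through invariant measures instead: by Pfister--Sullivan, the approximate product property makes ergodic measures dense, so open sets $U,V$ meeting the measure center are both charged by a single ergodic measure, giving transitivity on its support; shadowing is then used (via density of minimal points in $\Omega(T)$, the identity $\CR(T)=\Omega(T)$, and the fact that transitivity of $\CR(T)$ forces $\CR(T)=X$) to upgrade transitivity on the measure center to transitivity on $X$.

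In $(1)\Rightarrow(2)$ your key lemma is false: a transitive system with shadowing need \emph{not} admit a finite decomposition $X=X_0\cup\dots\cup X_{p-1}$ with $T^p$ mixing on each piece. The dyadic adding machine (Example~\ref{ex:adding} of the paper) is minimal, equicontinuous and has shadowing, yet no power of it is mixing on any nontrivial clopen piece --- for every $k$ the map $\tau^{2^k}$ splits $\Sigma$ into $2^k$ invariant clopen sets, so there is no terminal level $p$. Hence the uniform transition time $S_0(\delta)$ with bridges of \emph{exact} prescribed length does not exist. The decomposition is also unnecessary: the paper covers $X$ by finitely many sets $U_1,\dots,U_p$ of diameter $<\delta$, uses transitivity to get, for each pair $(i,j)$, a point $z_{i,j}\in U_i$ with $T^{\alpha(i,j)}(z_{i,j})\in U_j$, sets $M=\max\alpha(i,j)$, and requires only $M\le n\delta_2$; the bridges then have length $\alpha(l(i),k(i+1))\le M$, which is all the flexibility the bounds $n\le h_{i+1}-h_i\le n(1+\delta_2)$ demand. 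Your idea of concatenating exact orbit segments with short bridges and shadowing the result is exactly right --- you only need bounded, not exactly calibrated, bridge lengths.
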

\begin{proof}
	First we prove $\eqref{shd:aps:2}\Longrightarrow\eqref{shd:aps:1}$. First we show that $T$ restricted to its measure center is transitive. Let $U$, $V$ be nonempty open subsets of $X$ with a nonempty intersection with the measure center. It follows that there are invariant measures $\mu_U$ and $\mu_V$ such that
$\mu_U(U)>0$ and $\mu_V(V)>0$. Pfister and Sullivan proved \cite[Theorem 2.1]{PS05}  that there is a sequence $\mu_n$ of ergodic measures weak$^*$ converging to $\mu=(1/2)(\mu_U+\mu_V)$.  By \cite[Proposition~2.7]{DGS} $\liminf_{n\to\infty}\mu_n(W)\geq \mu(W)$ for every open set $W$. Hence there is $m$ such that
$\mu_m(U)>0$, and $\mu_m(V)>0$. Since $\mu_m$ is ergodic, $T$ is transitive on $\supp \mu_m$.
%	 $(X,T)$ is transitive on its measure center (see \cite{GK}).
%	In particular,
This shows that $U\cap T^n(V)$ is nonempty for some $n\in\N$ and therefore $(X,T)$ is transitive on the measure center.

Note that if $T$ has the shadowing property, then minimal points are dense in $\Omega(T)$ (see \cite[Corollary 1(i)]{Moothathu11}). On the other hand
the measure center always contains the closure of the set of minimal points and is contained in $\Omega(T)$. Hence shadowing implies that the measure center coincides with the non-wandering set. But shadowing implies also that $\CR(T)=\Omega(T)$ (\cite[Theorem 3.1.2.]{AH}) therefore the system $(\CR(T),T)$ is transitive. By  \cite[Proposition 5]{TShi} (see also \cite{Mazur00}) if $(\CR(T),T)$ is transitive, then $\CR(T)=X$. We conclude that $(X,T)$ is also transitive.
%This implies by result of .... \cite{MuszeSobieNazwiskoJaponczykaPrzypomniec} implies that $\CR(T)=X$, which by shadowing gives transitivity of $(X,T)$.
	
	For the proof of $\eqref{shd:aps:1}\Longrightarrow\eqref{shd:aps:2}$ fix
	any $\eps,\delta_1,\delta_2>0$. Use shadowing to pick $\delta>0$ for the given $\eps$. There is a finite cover $\{U_1,\ldots, U_p\}$ of $X$ with the diameter smaller than $\delta$. Since $T$ is transitive, for every pair $i,j\in\{1,\ldots, p\}$ there exists $\alpha(i,j)\in\N$ and a point $z_{i,j}\in U_i$ such that $T^{\alpha(i,j)}(z_{i,j})\in U_j$. Let $M=\max\limits_{i,j}\{\alpha(i,j)\}\in\N$. Let $N$ be such that $M\leq N\delta_2$. We claim that it is enough to set $N(\eps,\delta_1,\delta_2)=N$. Fix $\s{x}\in X^{\infty}$ and $n\geq N$.
	For every $i\in\N$ let $k(i)\in\{1,\ldots, p\}$ be such that $x_i\in U_{k(i)}$ and $l(i)\in\{1,\ldots, p\}$ satisfy $T^{n}(x_i)\in U_{l(i)}$.
	
	Define a $\delta$-pseudo-orbit as follows:
	\begin{multline*}
	\big(x_1,\,T(x_1),\,\ldots,\, T^{n-1}(x_1),\quad  z_{l(1),k(2)},\, T( z_{l(1),k(2)}),\ldots,\, T^{\alpha(l(1), k(2))-1}( z_{l(1),k(2)}),\quad\\
	x_2,\,T(x_2),\,\ldots, \,T^{n-1}(x_2),\quad  z_{l(2), k(3)},\,\ldots\big).
	\end{multline*}
	There exists $y\in X$ which $\eps$-traces it. Setting $$h_i=(i-1)n+\sum_{j=1}^{i-1}\alpha\big(l(j), k(j+1)\big),$$ we get that for all $i\in\N$ one has $T^{h_i}(y)\in B_n(x_i,\eps)$.
	Moreover $h_1=0$ and for any $i\in\N$ we have $h_{i+1}-h_i=n+\alpha\big (l(i),k(i+1)\big)\in\big[n, n(1+\delta_2)\big]$.
	This completes the proof.
\end{proof}

As a corollary we obtain the following theorem, which  is an extension of a result of Li and Oprocha \cite{LO13} who proved that for weakly mixing systems with shadowing the ergodic measures supported on orbit closures of regularly recurrent points are dense in the simplex of all invariant Borel probability measures. We obtain a stronger conclusion of entropy density of ergodic measures under a weaker assumption of transitivity and shadowing, but we do not know whether the measures supported on the orbit closures of regularly recurrent points are dense in this more general situation.
%{\color{blue}Na pewno tak (choc raczej napewno trzeba chain-mixing). To na przyszlosc zabawa...}

\begin{cor}\label{cor:shad}
If $(X,T)$ is transitive and has the shadowing property, then the set $\mathcal M_T^e(X)$ is entropy-dense in $\mathcal M_T(X)$.
\end{cor}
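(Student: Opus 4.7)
The plan is to observe that the corollary is an immediate two-step consequence of what has already been established, with no new substantive argument required. The first step is to apply Theorem \ref{thm:shd:aps}: since $(X,T)$ is assumed transitive and to satisfy the shadowing property, that theorem guarantees the approximate product property. The second step is to invoke the Pfister--Sullivan entropy density theorem from \cite{PS05}, which is highlighted in the paragraph preceding Theorem \ref{thm:shd:aps} as the original motivation for introducing the approximate product structure. That theorem states that whenever $(X,T)$ has the approximate product property, the set $\MTe(X)$ is entropy-dense in $\MT(X)$ in exactly the sense we need here (cf.\ Theorem \ref{thm:8}): for every $\mu\in\MT(X)$, every weak$^*$ neighborhood $U$ of $\mu$, and every $\eps>0$, one can find $\nu\in \MTe(X)\cap U$ with $h_\nu(T)\in(h_\mu(T)-\eps,h_\mu(T)]$. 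Chaining these two implications gives the conclusion.

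Because both ingredients are in place, the proof is essentially a one-line concatenation; the real work was absorbed into Theorem \ref{thm:shd:aps}. The only point worth flagging is the direction of the entropy inequality in the Pfister--Sullivan conclusion: one wants $h_\nu(T)\le h_\mu(T)$ together with $h_\nu(T)>h_\mu(T)-\eps$, not merely $h_{\mu_n}(T)\to h_\mu(T)$ along some sequence. This stronger form is precisely what is proved in \cite[Theorem~2.1]{PS05} (and is also the form used in Theorem \ref{thm:8}), so no additional diagonalization or approximation argument is needed. The hardest part, such as it is, is simply to verify that the references apply as claimed; there is no genuine obstacle to overcome.
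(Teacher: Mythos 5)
Your proof is correct and follows exactly the route the paper intends: Theorem \ref{thm:shd:aps} upgrades transitivity plus shadowing to the approximate product property, and the Pfister--Sullivan result (that the approximate product property implies entropy-density of ergodic measures, as recalled just before that theorem) finishes the argument. Your remark about the precise form of the entropy estimate in \cite[Theorem~2.1]{PS05} is a sensible check but does not change the substance.
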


\begin{exmp}
Every  Axiom A diffeomorphism and every transitive shift of finite type has the shadowing property (see \cite{AH} and \cite{WaltersSFT}).
%Hence, when additionally transitive, it satisfies assumptions of Corollary~\ref{cor:shad}. (to wiadomo i bez tego)
\end{exmp}

\begin{exmp}\label{ex:adding}
We briefly recall the construction of the \emph{adding machine}.
Equip $\Sigma=\{0,1\}^\infty$ with the product topology. Define the addition $\oplus$ $$\Sigma\times\Sigma\ni (\alpha,\beta)\mapsto \alpha\oplus\beta \in\Sigma$$ as the coordinate-wise addition modulo $2$ with possible infinite carry-over (see \cite{Vries}, p.~246 for details).
Let $\tau \colon\Sigma\to\Sigma$ be given by $\tau(z)=z\oplus \mathbf{1}$, where $\mathbf{1}=(1,0,0,\ldots)$.

The dynamical system $(\Sigma,\tau)$ is known as the \emph{dyadic adding machine} and has the shadowing property (see \cite{Kurka03}). Moreover $(\Sigma,\tau)$ is minimal, equicontinuous, uniquely ergodic, transitive but not totally transitive and has zero topological entropy (see \cite{Vries}, Chapter III, (5.12) 3). As a consequence of Corollary~\ref{cor:shad} we obtain that $(\Sigma, \tau)$ has the approximate product structure, but it can have neither weak nor almost specification.

By the same argument, $(\Sigma,\tau^2)$ does not have approximate product structure since it has the shadowing property, but is not transitive.
\end{exmp}

The above example shows that approximate product structure does not imply weak mixing, nor positive topological entropy, nor is inherited by Cartesian products. Moreover, it demonstrates that $(X,T)$ may have the approximate product structure while $T^k$ does not have this property for some $k\ge 2$.

%By \cite{KOR} and \cite{Pavlov14} the approximate product property neither imply the weak specification property, nor intrinsic ergodicity (see the next %section {\color{blue}o co dokladnie chodzi?}).

\begin{thm}[\cite{PS05}, Proposition 2.2]
Any factor of a system with the approximate product structure has the approximate product structure.
\end{thm}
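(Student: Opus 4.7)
The plan is to transfer the approximate product structure from $(X,T)$ to a factor $(Y,S)$ by lifting orbit sequences, applying the property upstairs, and pushing the tracing point back down via the factor map.

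First I would fix a factor map $\pi\colon X\to Y$, meaning a continuous surjection with $\pi\circ T=S\circ\pi$. Given $\eps>0$, $\delta_1>0$, $\delta_2>0$, the first step is to use uniform continuity of $\pi$ (automatic since $X$ is compact) to choose $\eps'>0$ with the property that $\rho_X(a,b)\le\eps'$ implies $\rho_Y(\pi(a),\pi(b))\le\eps$. Next, I would invoke the approximate product structure of $(X,T)$ with parameters $(\eps',\delta_1,\delta_2)$ to obtain an integer $N=N(\eps',\delta_1,\delta_2)$; this $N$ will serve as the constant needed for $(Y,S)$.

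Second, given any sequence $\{y_i\}_{i=1}^\infty\subset Y$ and any $n\ge N$, surjectivity of $\pi$ lets me pick lifts $x_i\in\pi^{-1}(y_i)$ for each $i$. Applying the approximate product structure of $(X,T)$ to $\{x_i\}$ yields a gap sequence $\{h_i\}_{i=1}^\infty\subset\Zp$ with $h_1=0$, $n\le h_{i+1}-h_i\le n(1+\delta_2)$, and a point $z\in X$ such that for every $i$,
\[
\bigl|\{0\le j<n : \rho_X(T^{h_i+j}(z),T^j(x_i))>\eps'\}\bigr|\le\delta_1 n.
\]
Set $w=\pi(z)\in Y$ and use the same gap sequence $\{h_i\}$. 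Because $\pi$ intertwines $T$ and $S$, we have $S^{h_i+j}(w)=\pi(T^{h_i+j}(z))$ and $S^j(y_i)=\pi(T^j(x_i))$. By the choice of $\eps'$, whenever $\rho_X(T^{h_i+j}(z),T^j(x_i))\le\eps'$ one has $\rho_Y(S^{h_i+j}(w),S^j(y_i))\le\eps$, so
\[
\{0\le j<n : \rho_Y(S^{h_i+j}(w),S^j(y_i))>\eps\}\subseteq \{0\le j<n : \rho_X(T^{h_i+j}(z),T^j(x_i))>\eps'\},
\]
and its cardinality is bounded by $\delta_1 n$ as required.

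There is no real obstacle here; the argument is a routine functorial transfer. The only subtlety to watch is matching the strict/non-strict inequalities in the definition, which is handled by choosing $\eps'$ so that $\rho_X\le\eps'$ forces $\rho_Y\le\eps$ (equivalently, $\rho_Y>\eps$ forces $\rho_X>\eps'$). The gap constraints and the indexing $h_1=0$ transfer verbatim because we reuse the same sequence $\{h_i\}$ produced in $X$.
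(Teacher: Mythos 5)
Your proof is correct and is essentially the standard argument: the paper itself only cites \cite{PS05} for this fact, and the proof there is the same routine transfer — lift the sequence through the factor map, apply the approximate product structure upstairs with a uniform-continuity-adjusted $\eps'$, and push the tracing point and the unchanged gap sequence back down. Your handling of the strict inequality (choosing $\eps'$ so that $\rho_Y>\eps$ forces $\rho_X>\eps'$, hence containment of mistake sets) is exactly the right care to take.
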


\begin{rem}
If the set of ergodic measures is dense in the simplex of all invariant measures, then the simplex is either trivial or the Poulsen simplex.  Example~\ref{ex:adding} above shows that the approximate product property cannot guarantee that the simplex of invariant measures is Poulsen.
\end{rem}
%\begin{rem}
%It is possible to construct a shift space which fulfills all assumptions of Theorem~\ref{thm:Poulsen} such that the ergodic measures are not entropy-dense in %$\mathcal M_T(X)$ and hence do not have the approximate product structure (\cite{DK}).
%\end{rem}

\subsection{Dynamical properties relative to a regular periodic decomposition}
For shifts of finite type or interval maps the periodic specification property is equivalent to topological mixing. Therefore transitive, but not mixing shifts of finite type and interval maps do not have specification, nor weak (almost) specification as the later two properties imply weak mixing which is in this case equivalent to mixing. But we will show that these and similar examples have the approximate product property.

The domain of a transitive map $T\colon X\to X$ cannot be decomposed into $T$-invariant topologically nontrivial subdomains (sets with pairwise disjoint nonempty interior). Banks studied in \cite{Banks97} transitive maps $T$ such that $T^n$ is not transitive for some integer $n\ge 2$. He showed that for such maps there exists a decomposition of $X$ into topologically nontrivial subsets $D_0,D_1,\ldots,D_{n-1}$  which are mapped by $T$ in a periodic fashion, that is, $T(D_i)=D_{(i+1)\bmod n}$ for $0\le i\le n-1$ and fulfilling some additional assumption. He called these decompositions \emph{regular periodic decompositions}.

\begin{defn}
We say that a collection $\mathcal{D}=\{D_0,\ldots,D_{n-1}\}$ is a~\emph{regular periodic decomposition} of a dynamical system $(X,T)$ if the following conditions are satisfied:
\begin{enumerate}
\item for each $i\in\{0,\ldots, n-1\}$ the set $D_i$ is regular closed (that is $D_i=\overline{\text{int}(D_i)}$),
\item $D_i\cap D_j$ is nowhere dense whenever $i\neq j$,
\item $T(D_i)\subset D_{(i+1)\bmod n}$ for $0\leq i<n-1$,
\item $D_0\cup\ldots\cup D_{n-1}=X$.
\end{enumerate}
\end{defn}
Clearly, $\{X\}$ is always a regular periodic decomposition of the space $X$. We call such a~decomposition {\it trivial}. Banks proved that a transitive dynamical system is either totally transitive, or it has a regular periodic decomposition $\mathcal{D}=\{D_0,\ldots,D_{n-1}\}$ for some $n\ge 2$.

A class $P$ of compact dynamical systems is a \emph{property} if it is saturated with respect to conjugacy, that is, if $(X,T)\in P$ and $(Y,S)$ is conjugated to $(X,T)$, then $(Y,S)\in P$.

\begin{defn}\label{df:relativespec}
Let $P$ be a~property of compact dynamical systems (e.g. transitivity, (weak) mixing, specification).
A~dynamical system $(X,T)$ has the~{\it property $P$ relative to a~regular periodic decomposition $\mathcal D=\{D_0,\ldots, D_{n-1}\}$} if $T^n|_{D_i}$ has the property $P$ for each $i\in\{0,\ldots,n-1\}$. We say that $(X,T)$ has the \emph{relative property $P$}, if there exists a regular periodic decomposition $\mathcal D$ such that $(X,T)$ has the relative property $P$ with respect to $\mathcal D$.
\end{defn}
\begin{rem}
If $(X,T)$ has the property $P$ and $\tau$ is a cyclic permutation of the set $\{0,\ldots, r-1\}$ given by $\tau(i)=i+1\bmod r$, then the system
\[\big(X\times\{0,\ldots, r-1\},\, S\big), \text{ where }S(x,i)=
\begin{cases} (T(x),1),&\text{ if }i=0,\\
(x,\tau(i)),&\text{ otherwise}
\end{cases}
\]
has the relative property $P$ with respect to the regular periodic decomposition $\{D_0,\ldots, D_{r-1}\}$ where $D_i=X\times\{i\}$ for every $i\in\{0,\ldots, r-1\}$.
\end{rem}

Hence we can consider systems with the relative specification-like property.
It is a simple but a bit surprising fact that the almost product property and its relative version are equivalent.

\begin{thm}\label{thm:40}
A dynamical system $(X,T)$ has the relative approximate product structure if and only if it has the approximate product structure.
\end{thm}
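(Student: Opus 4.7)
The forward direction is immediate: if $(X,T)$ has the approximate product property, then the trivial regular periodic decomposition $\{X\}$ of period $r=1$ witnesses the relative approximate product structure, since $T^{1}|_X=T$ has the approximate product property by assumption.

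For the converse, let $\mathcal D=\{D_0,\ldots,D_{r-1}\}$ be a regular periodic decomposition such that each $(D_t,T^r|_{D_t})$ has the approximate product property. The main observation is that a $T^r$-tracing inside a single piece $D_t$ automatically ``fills in'' to a $T$-tracing in $X$: by uniform continuity of $T$, for any $\varepsilon>0$ one can choose $\varepsilon'\in(0,\varepsilon)$ such that $\rho(a,b)<\varepsilon'$ implies $\rho(T^s(a),T^s(b))<\varepsilon$ for every $s\in\{0,\ldots,r-1\}$. The only remaining issue is that different $x_i$ may lie in different pieces $D_{k(i)}$; I resolve this by first pulling every $x_i$ back into a common piece.

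Fix $\varepsilon,\delta_1,\delta_2>0$ and a sequence $\{x_i\}\subset X$. For each $i$ choose $k(i)\in\{0,\ldots,r-1\}$ with $x_i\in D_{k(i)}$ and put $\sigma(i)=(k(i)-k(1))\bmod r$, so $\sigma(1)=0$. Since $T$ is onto (the standing assumption of this part of the paper), the inclusions $T(D_t)\subseteq D_{t+1\bmod r}$ together with $\bigcup_t T(D_t)=X=\bigcup_t D_t$ and closedness of the $D_t$ force $T(D_t)=D_{t+1\bmod r}$ for every $t$; hence $T^{\sigma(i)}\colon D_{k(1)}\to D_{k(i)}$ is surjective, and I can choose $y_i'\in D_{k(1)}$ with $T^{\sigma(i)}(y_i')=x_i$, taking $y_1'=x_1$. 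Choose $\delta_1'<\delta_1$, $\delta_2'<\delta_2$ and apply the approximate product property of $(D_{k(1)},T^r|_{D_{k(1)}})$ to $\{y_i'\}$ with block length $m=\lceil(n+r)/r\rceil$ (which exceeds the relevant APP threshold once $n$ is large enough), obtaining $y\in D_{k(1)}$, nonnegative integers $\tilde h_i$ with $\tilde h_1=0$ and $m\le\tilde h_{i+1}-\tilde h_i\le m(1+\delta_2')$, and sets $\Lambda_i\subset[0,m)$ of size at most $\delta_1'm$ such that $(T^r)^{\tilde h_i+l}(y)$ is $\varepsilon'$-close to $(T^r)^l(y_i')$ for every $l\in[0,m)\setminus\Lambda_i$. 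Setting $h_i=r\tilde h_i+\sigma(i)$ yields $h_1=0$; the identity $T^{rl+s}(y_i')=T^{rl+s-\sigma(i)}(x_i)$ (valid whenever $rl+s\ge\sigma(i)$) combined with the choice of $\varepsilon'$ then gives $\rho(T^{h_i+j}(y),T^j(x_i))<\varepsilon$ for every $j\in[0,n)$ whose ``block index'' $l=\lfloor(j+\sigma(i))/r\rfloor$ lies outside $\Lambda_i$.

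The principal obstacle is the remaining book-keeping: one must absorb the $O(r)$ error terms (coming from the offsets $\sigma(i)\in\{0,\ldots,r-1\}$ and from the fill-in across $r$ intermediate iterates) into the mistake budget $\delta_1 n$ and the gap budget $[n,n(1+\delta_2)]$. Each $l\in\Lambda_i$ spoils at most $r$ values of $j\in[0,n)$, giving at most $r|\Lambda_i|+O(r)\le r\delta_1'(n/r+2)+r$ bad indices per $i$; likewise $h_{i+1}-h_i=r(\tilde h_{i+1}-\tilde h_i)+\sigma(i+1)-\sigma(i)$ lies within $r-1$ of $r(\tilde h_{i+1}-\tilde h_i)$. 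Both constraints are met once $\delta_1'$, $\delta_2'$ are chosen strictly below $\delta_1$, $\delta_2$ and $n\ge N$ for some $N=N(\varepsilon,\delta_1,\delta_2,r)$, which establishes the approximate product property of $(X,T)$.
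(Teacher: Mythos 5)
Your proof is correct and follows essentially the same route as the paper's: pull each $x_i$ back into a single piece of the decomposition using surjectivity, apply the approximate product property of $T^r$ on that piece with block length roughly $n/r$, transfer $\eps'$-closeness of $T^r$-iterates to $\eps$-closeness of all intermediate $T$-iterates by uniform continuity, and absorb the $O(r)$ offset and alignment errors into the mistake and gap budgets by taking $\delta_1'<\delta_1$, $\delta_2'<\delta_2$. The only cosmetic differences are that the paper anchors everything in $D_0$ (and shifts the tracing point by $T^{p_1}$ at the end) while you anchor in $D_{k(1)}$, and that you explicitly verify $T(D_t)=D_{(t+1)\bmod r}$, a step the paper leaves implicit.
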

\begin{proof}
Choose a regular periodic decomposition $\mathcal D=\{D_0,\ldots, D_{r-1}\}$ such that $T^r|_{D_i}$ has the approximate product structure for every $i\in\{0,\ldots, r-1\}$. Fix $\eps>0$, $\delta_1>0$ and $\delta_2>0$. Let $\eta\in(0,\eps)$ be such that for every $y,z\in X$ and every $i=0,\ldots, r$ one has $\rho(T^i(y), T^i(z))\leq\eps$ provided $\rho(y,z)\leq\eta$. Let $M\geq 4$ be chosen for $\eta,\, \delta_1/2,\, \delta_2/2$ using the approximate product structure of $T^r|_{D_0}$.
Let $N\geq rM$ be such that $\delta_2\geq 6r/(N-2r)$. We claim that it is enough to set $N(\eps, \delta_1, \delta_2)=N$. Fix $n\geq N$ and $\{x_i\}_{i=1}^{\infty}\in X^{\infty}$. For any $i\in\N$ let $\tilde x_i\in D_0$ be such that there exists $p_i\in\{0,\ldots, r-1\}$ such that $T^{p_i}(\tilde x_i)=x_i$. There are $m> M$ and $q\in\{0,\ldots,r-1\}$ satisfying $n=(m-1)r-q$. Let $y\in D_0$ and $\{h_i\}_{i=1}^{\infty}\subset\N$ be such that $h_1=0$, $m\leq h_{i+1}-h_i\leq (1+\delta_2/2)m$ for every $i\in\N$ and $$ \left|\big\{0\leq j<m\,:\,\rho\big(T^{r(h_i+j)}(y),T^{rj}(\tilde x_i)\big)>\eta\big\}\right|\leq \delta_1 m/2\text{ for any } i\in\N.$$
By the choice of $\eta$ we get that
$$ \left|\big\{0\leq j<rm\,:\,\rho\big(T^{rh_i+j}(y),T^j(\tilde x_i)\big)>\eps\big\}\right|\leq \delta_1rm/2\text{ for any } i\in\N.$$
Consequently, $$ \left|\big\{0\leq j<r(m-1)\,:\,\rho\big(T^{rh_i+j+p_i}(y),T^j( x_i)\big)>\eps\big\}\right|\leq \delta_1rm/2\text{ for any } i\in\N.$$
Set $z=T^{p_1}(y)$, $g_1=0$ and $g_i=rh_i+p_i$ for $i>1$.
One has $g_{i+1}-g_i=r(h_{i+1}-h_i)+p_{i+1}-p_i\in [r(m-1), (1+\delta_2/2)rm+r]\subset[n,(1+\delta_2)n]$ for any $i>1$, where the inclusion holds because $\delta_2\geq 6r/(n-2r)$ and hence $(1+\delta_2/2)rm+r\leq(1+\delta_2)((m-1)r-q)$ .
Moreover for every $i\in\N$ we have
$$ \left|\big\{0\leq j<n\,:\,\rho\big(T^{g_i}(z),T^j( x_i)\big)>\eps\big\}\right|\leq \delta_1rm/2\leq \delta_1 n\text{ for any } i\in\N$$
since $m\geq 4$.
This completes the proof.
\end{proof}

We recall that a \emph{topological graph} (or a \emph{graph} for short) is, informally said, a~compact connected metric space homeomorphic to a representation of a graph (a~combinatorial object consisting of a finite set of vertices and a finite set of edges joining pairs of distinct vertices) in the Euclidean space, where the vertices of the graph are represented by distinct points and the edges are disjoint arcs joining the corresponding pairs of points (see page \pageref{top-graph} or \cite[p. 10]{Croom78}).

\begin{cor}\label{cor:41}
If $(X,T)$ has the relative almost (weak) specification property, then  it has the approximate product property. In particular,  transitive and noninvertible graph map or transitive sofic shift has the approximate product property.
\end{cor}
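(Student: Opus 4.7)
The plan is straightforward: unpack the definition of the relative property, apply the implication from (weak/almost) specification to the approximate product structure, and invoke Theorem~\ref{thm:40}. More precisely, suppose $(X,T)$ has the relative almost (resp. weak) specification property. By Definition~\ref{df:relativespec}, there exists a regular periodic decomposition $\mathcal{D}=\{D_0,\ldots,D_{r-1}\}$ such that each restriction $T^r|_{D_i}$ has the almost (resp. weak) specification property. By the Remark preceding Theorem~\ref{thm:shd:aps}, both of these properties imply the approximate product property. Hence each $T^r|_{D_i}$ has the approximate product structure, which is precisely to say that $(X,T)$ has the relative approximate product property. Theorem~\ref{thm:40} then yields the approximate product property for $(X,T)$.

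For the two concrete classes, the strategy is to exhibit the strongest hypothesis, namely the relative (periodic) specification property, and then apply the first part. A transitive sofic shift is either topologically mixing, in which case Weiss's theorem gives the periodic specification property directly, or, via the spectral decomposition coming from the Fischer cover, it admits a regular periodic decomposition $\{D_0,\ldots,D_{r-1}\}$ into closed subshifts with each $\sigma^r|_{D_i}$ a mixing sofic shift; Weiss's theorem then supplies specification on each piece. A transitive noninvertible graph map is handled analogously: Banks's structure theorem yields a regular periodic decomposition, each $D_i$ is itself a topological graph, and $T^r|_{D_i}$ is topologically mixing, so Blokh's theorem (cf. the graph version on page~\pageref{top-graph}) gives the specification property on each piece.

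The main delicate point I would have to verify is the upgrade from transitivity of $T^r|_{D_i}$ on each piece to topological mixing of that restriction; only then can Blokh's graph theorem or the sofic mixing\,$\Rightarrow$\,specification statement be invoked. For sofic shifts this upgrade is classical via the Fischer cover (the period in the cover governs the decomposition, and the restricted action on each atom is mixing by construction). For noninvertible transitive graph maps it is where the noninvertibility enters: an invertible transitive graph self-map can be an irrational rotation of the circle, which is not covered by the corollary, whereas in the noninvertible case the Blokh theory forces mixing on each piece of Banks's decomposition. Once these structural inputs are granted, the first part of the corollary completes the argument without further work.
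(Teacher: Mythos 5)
Your proposal is correct and follows essentially the same route as the paper: the first part is exactly the paper's argument (almost/weak specification implies the approximate product structure on each piece of the decomposition, then Theorem~\ref{thm:40} removes the word ``relative''), and the second part fleshes out the fact, which the paper simply cites as well known from Blokh, Weiss and Banks, that transitive sofic shifts and transitive noninvertible graph maps have the relative specification property. The extra detail you supply about upgrading transitivity to mixing on each atom of the decomposition (and the role of noninvertibility in excluding the irrational rotation) is consistent with the references the paper invokes.
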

\begin{proof}
The first part is a consequence of the previous theorem. The second part follows from the well known fact that transitive noninvertible graph maps and  transitive sofic shifts have the relative specification property. Note that we added the noninvertibility assumption for graph maps to exclude the irrational rotation of the circle. It follows from \cite{Blokh84, Blokh87} (see also \cite{Banks97}) that this is the only possible example of a transitive graph map without relative specification.
\end{proof}

\section{Specification and shadowing}\label{sec:shadowing}

One of fundamental tools of topological dynamics is the shadowing property (or pseudo-orbit tracing property), which allows tracing pseudo-orbits (approximate orbits, that is, sequences where the next point is uniformly close to the image of the previous point) with real orbits. We recall that a sequence $\s{x}\in X^\infty$ is a \emph{$\delta$-pseudo-orbit} for $T$ if
$\rho(T(x_n),x_{n+1})<\delta$ for each $n\in\Zp$. We say that a dynamical system $(X,T)$ has the \emph{shadowing property} if for any $\eps>0$ there is a $\delta>0$ such that for every $\delta$-pseudo-orbit $\s{x}$ one can find a point $y\in X$ with $\rho(x_n,T^n(y))<\eps$ for all $n\in\Zp$. %{\color{blue}Proponuje zmienic $\Zp$ na $\N_0$. Henk mi strasznie kiedys narzekal, ze $+$ w $\Zp$ sugeruje $>0$ i to jest bez sensu...}.

Bowen defined specification for systems with the shadowing property. %(at first, Axiom A diffeomorphisms restricted to some special sets).
Subsequent generalizations of the specification property were defined mostly for systems without the shadowing property. It is perhaps an interesting phenomenon that if $(X,T)$ has the shadowing property, then many specification-like properties are equivalent.

There are many notions generalizing the shadowing property. Here we are concerned with two averaged versions of shadowing. Both follow from the almost specification property (see \cite{KKO,WOC}).

The first was introduced by Blank \cite{Blank88}, who considered sequences $\s{x}$ in $X$ in which the distances $\rho(T(x_n),x_{n+1})$ are small only on average and points whose orbits trace such sequences with small average errors.
\begin{defn}%[\cite{KKO}, Definition 2.5]
A sequence $\seq{x}{n}\in X^{\infty}$ is a \emph{$\delta$-average-pseudo-orbit} for $T$ if there is an integer $N>0$ such that for every $n>N$ and $k\geq 0$  one has \[\frac{1}{n}\sum_{i=0}^{n-1} \rho(T(x_{i+k}),x_{i+k+1})<\delta.\]
\end{defn}

\begin{defn}%[\cite{KKO}, Definition 2.5]
A dynamical system $(X,T)$ has the \emph{average shadowing property} if for every $\eps>0$ there is a $\delta>0$ such that for any $\delta$-average-pseudo-orbit $\seq{x}{n}$ for $T$ there is $y\in X$ with
\[
\limsup_{n\to\infty}\frac{1}{n}\sum_{k=0}^{n-1} \rho(T^k(y),x_{k})<\eps.
\]
\end{defn}

The next notion, coined by Gu \cite{Gu07}, appeared earlier unnamed in \cite[Remark 3]{Sigmund77}.

\begin{defn}%[\cite{KKO}, Definition 2.8]
A sequence $\seq{x}{n}\in X^{\infty}$ is an {\it asymptotic-average-pseudo-orbit} for $T$ if
$$\lim\limits_{N\to\infty}\frac{1}{N}\sum_{i=0}^{N-1} \rho(T(x_i),x_{i+1})=0.$$
\end{defn}

Gu \cite{Gu07} also introduced the following generalization of the shadowing property.
\begin{defn}%[\cite{KKO}, Definition 2.5]
A dynamical system $(X,T)$ has the \emph{asymptotic average shadowing property} if for every asymptotic-average-pseudo-orbit $\seq{x}{n}$ for $T$ there is $y\in X$ with
\[
\limsup_{n\to\infty}\frac{1}{n}\sum_{k=0}^{n-1} \rho(T^k(y),x_{k})=0.
\]
\end{defn}

The following fact summarizes relations between various mixing properties under assumption of shadowing.
Similar results were implicitly used before by many authors, starting with Bowen's classical work \cite{Bowen71}.
A complete and purely topological proof of most of the implications stated below can be found in \cite{KO12} which is based on an earlier paper by Sakai \cite{Sakai03}. The remaining implications are proved in \cite{KKO} (see also \cite{KO10}).

	\begin{thm}[many authors]\label{thm:shadowing}
		Let $(X,T)$ be a dynamical system with the
		shadowing property. Then the following conditions are equivalent:
		\begin{enumerate}
			\item\label{main:c1} $(X,T)$ is totally transitive,
			\item\label{main:c2} $(X,T)$ is topologically weakly mixing,
			\item\label{main:c3} $(X,T)$ is topologically mixing,
			\item\label{main:c4} $T$ is onto and $(X,T)$ has the specification property,
\item\label{main:c44} $T$ is onto and $(X,T)$ has the asymptotic average shadowing property,			
\item\label{main:c5} $T$ is onto and $(X,T)$ has the average shadowing property,
\item\label{main:c6} $T$ is onto and $(X,T)$ has the almost specification property.
		\end{enumerate}
		Moreover, if  the natural extension $(X_T,\sigma_T)$ is expansive, then any of the above conditions
		is equivalent to the periodic specification property of $T$.
	\end{thm}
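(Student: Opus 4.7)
The strategy is to arrange the seven conditions in a cyclic chain of implications. Conditions (4)--(7) carry ``$T$ is onto'' as a standing hypothesis while (1)--(3) do not, so the closing arrows must recover surjectivity as well. The implications (3) $\Longrightarrow$ (2) $\Longrightarrow$ (1) are immediate from the definitions, so the real work is to prove that, under shadowing, (1) forces topological mixing, (3) forces the specification property, and one of the averaged shadowing conditions forces transitivity back. Once these structural steps are established, the remaining transfers among specification-like conditions are routine.

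The main obstacle is (1) $\Longrightarrow$ (3) under shadowing. Here I would first use that shadowing implies $\CR(T) = \Omega(T)$ (a result cited already in the proof of Theorem \ref{thm:shd:aps}), so that transitivity of $(X,T)$ forces $\CR(T) = X$, and in particular $T$ is onto. Total transitivity combined with shadowing then yields chain mixing: for any nonempty open $U,V \subset X$ and all sufficiently large $n$ one can concatenate a transit orbit from $U$ to $V$ (provided by transitivity of some iterate $T^k$) with short $\delta$-jumps to produce a $\delta$-pseudo-orbit of any prescribed length from $U$ to $V$. Shadowing such pseudo-orbits gives $T^n(U) \cap V \ne \emptyset$ for all large $n$, which is topological mixing. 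This is essentially the argument of Sakai \cite{Sakai03} carried out in full in \cite{KO12}.

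For (3) $\Longrightarrow$ (4) I would run the classical Bowen construction. Fix $\eps>0$, take $\delta<\eps$ from the shadowing property, and shrink $\delta$ further so that $T$ sends $\delta$-close pairs to $\eps$-close pairs. A finite $\delta/2$-net together with mixing yields a uniform $N=N(\eps)$ such that for every $n \ge N$ and every $u,v \in X$ one can find $w \in X$ with $\rho(T(u),w)<\delta$ and $\rho(T^n(w),v)<\delta$. Given any $N$-spaced specification $\{T^{[a_j,b_j]}(x_j)\}_{j=1}^k$, insert such bridging segments into the gaps to obtain a genuine $\delta$-pseudo-orbit over $[a_1,b_k]$; any shadowing point of this pseudo-orbit is the desired tracing point. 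The implication (4) $\Longrightarrow$ (7) is the Pfister--Sullivan observation that specification implies almost specification with any mistake function (already recorded in the text), and (7) $\Longrightarrow$ (5), (7) $\Longrightarrow$ (6) are the almost-specification-implies-(asymptotic)-average-shadowing results of \cite{KKO,WOC}.

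To close the cycle I would prove that either the average or asymptotic average shadowing property, combined with shadowing and surjectivity, implies topological transitivity (hence, by the first step, mixing). The idea is that for any nonempty open $U,V \subset X$ one fabricates an asymptotic-average-pseudo-orbit by alternating an orbit inside $U$ with an orbit inside $V$ over progressively longer blocks; the average-shadowing point of this sequence must visit both $U$ and $V$, forcing $U \cap T^{-n}(V) \ne \emptyset$ for some $n$. This is the argument of \cite{KKO,KO10}. Finally, the addendum about periodic specification under expansiveness of the natural extension follows at once from Lemma \ref{lem:psp}, since we have by then established that every listed condition implies the specification property.
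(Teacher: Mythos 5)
Your overall architecture -- the cycle $(3)\Rightarrow(2)\Rightarrow(1)\Rightarrow(3)$, then $(3)\Rightarrow(4)\Rightarrow(7)\Rightarrow(5),(6)$ and back to $(1)$ -- is exactly the decomposition underlying the sources the paper cites (\cite{KO12}, which builds on \cite{Sakai03}, for the implications among transitivity, mixing, specification and average shadowing, and \cite{KKO} for almost specification and asymptotic average shadowing); the paper itself offers no proof beyond these citations. Your treatment of $(1)\Rightarrow(3)$ via chain mixing, of $(3)\Rightarrow(4)$ via Bowen's bridging construction, and of the final addendum via Lemma~\ref{lem:psp} is sound.

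The one step that would fail as written is the closing implication. You claim that for open $U,V$ an asymptotic-average-pseudo-orbit alternating progressively longer orbit blocks started in $U$ and in $V$ has the property that any point tracing it on average ``must visit both $U$ and $V$.'' It need not: average tracing only guarantees that $\rho(T^k(z),\xi_k)<\eps$ on a set of $k$ of density one, and since the block lengths grow, the block-starting positions (the only places where $\xi_k$ actually lies in $U$ or $V$) form a set of density zero, which the tracing estimate is free to miss entirely. All you can extract is that the orbit of $z$ enters $\eps$-neighbourhoods of the orbit closures of the chosen base points, which is far from $U\cap T^{-n}(V)\neq\emptyset$. The arguments in \cite{KKO,KO10} avoid this by working not with arbitrary open sets but with closed invariant objects that have uniformly separated neighbourhoods -- chain classes of $\CR(T)$, or the pieces of a regular periodic decomposition witnessing the failure of total transitivity -- so that spending positive density time $\eps$-near each piece already yields a contradiction; shadowing is then used a second time to pass from chain transitivity/chain mixing to genuine mixing. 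You should either run the argument at that level or explicitly reduce to it before invoking the density estimate.
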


Note that if $(X,T)$ is expansive or positively expansive, then the natural extension $(X_T,\sigma_T)$ is expansive (see \cite[p.~57]{AH}). As we observed above the approximate product property is equivalent to transitivity for systems with the specification property.
\section{Symbolic dynamics}\label{sec:symbolic}

We encourage the reader unfamiliar with techniques from symbolic dynamics to consult \cite{LM}. Here we follow the terminology and notation of \cite{LM}
as close as possible. We restrict our presentation to one-sided shifts, but all results presented here remain true in the two-sided setting.

Equip a finite \emph{alphabet} $\alf$ with discrete topology and consider $\alf^\infty$ as a compact metric space in the product topology (recall that $\alf^\infty$ denotes the set of all infinite sequences of elements of $\alf$ indexed by nonnegative integers). The formula $\rho(x,y)=2^{-k}$, where $x,y\in\alf^\infty$ and $k=\sup\{j\ge 0:x_j=y_j\}$ generates the topology of $\alf^\infty$ (we agree that $2^{-\infty}=0$ here).

Let $\sigma$ be the \emph{shift} $\{x_i\}_{i=0}^\infty\mapsto \{x_{i+1}\}_{i=0}^\infty$ on $\alf^\infty$.
A \emph{shift space} over $\alf$ is a nonempty closed and $\sigma$-invariant subset of $\alf^\infty$. We call a dynamical system $(\alf^\infty,\sigma)$ the \emph{full shift} over $\alf$.

A \emph{block} of length $k$ over $\alf$ is any string $w = w_1w_2\ldots w_k$ of symbols from $\alf$. We shall use the term ``\emph{a word}'' and ``\emph{a block}'' interchangeably.
%The \emph{empty word} $\perp$ is the unique word of length $0$. We write $\alf^+$ for the set of nonempty words over $\alf$. The set $\alf^+\cup\{\perp\}$ is denoted by $\alf^*$, and
The length of a word $w$ is denoted by $|w|$.
%We write $x_{[i,j]}$ to denote the block $x_ix_{i+1}\ldots x_j$ appearing in $x \in\alf^\infty$ for some $0\le i\le j$.
We say that a block $w$ \emph{occurs} or \emph{appears} in $x$ if $w = x_ix_{i+1}\ldots x_j$ for some $0\le i\le j$. The set of all words that occur in $x\in\alf^\infty$ is denoted $\lang(x)$. The \emph{language} of a shift space $X$ is the set $\lang(X)$ of all blocks that occur in some $\lang(x)$ for $x\in X$. By $\lang_n(X)$ we denote the set of blocks of length $n$ in $\lang(X)$. Concatenation of blocks or sequences is indicated by juxtaposition in the obvious way, for example $w^n=w\ldots w$ ($n$-times) and $w^\infty=www\ldots\in\alf^\infty$.
%We define $w^0=\perp$.

There is a characterization of a shift spaces using \emph{forbidden blocks}. It says that $X\subset \alf^{\infty}$ is a shift space if and only if there exists a set $\mathcal{F}$ consisting of blocks over $\alf$ such that $x\in X$ is equivalent to $\lang(x)\cap\mathcal{F}=\emptyset$ (in other words: no block from $\mathcal{F}$ occurs in $x$). A \emph{shift of finite type} is a shift space which can be defined using a finite set of forbidden blocks.

By a \emph{countable graph} we mean an irreducible directed graph with at most countably many vertices and edges. A graph is \emph{irreducible} if for any pair of its vertices $(v_i, v_j)$ there is a directed path from $v_i$ to $v_j$.

A \emph{labeled graph} $(G,\Theta)$ is a countable graph $G$ together with a \emph{labeling} function $\Theta$ mapping edges of $G$ to a finite alphabet $\alf$. %Any labeling extends, in an obvious way, to a labeling of all (finite and infinite) paths on $G$ by blocks or sequences over $\alf$.
The set $Y_G$ of infinite sequences constructed by reading off labels along an infinite path of $(G,\Theta)$ is
shift invariant, thus its closure $X=\overline{Y_G}$ in $\mathcal{A}^{\infty}$ is a shift space. Then we also say that $X$ is \emph{presented} by $(G,\Theta)$. Any shift space admitting such a presentation is a \emph{coded system}. A \emph{sofic shift} is a coded system which can be presented by a finite graph. A \emph{synchronized system} is a shift space which has a \emph{synchronizing word}, that is, there is $v\in\lang(X)$ such that $uv,vw\in\lang(X)$ imply $uvw\in\lang(X)$. Synchronized systems were introduced in \cite{BH}. Every synchronized system is coded.

The uniqueness of minimal right-resolving presentation known for sofic shifts
extends to synchronized systems as outlined in \cite[p. 451]{LM} (see  also \cite[p. 1241]{Thomsen} and references therein). Synchronized systems and their generalizations were extensively studied in \cite{FF}.

A \emph{cylinder set} of a word $u\in\lang_{r}(X)$ in a shift space $X$, where $r\in\N$, is the set $[u]\subset X$ of points from $X$ which begin with the block $u$, that is, $\{y\in X: y_{[0,r-1]}=u\}$. Cylinders are open and closed subsets of $X$ generating the topology. Furthermore, if $x\in X$, then the Bowen balls $B_n(x, \eps)$ centered at $x$ coincide with the cylinder sets $[x_{[0,k]}]$, where $k=n+s$ and $s$ depends only on $\eps>0$.

Using this terminology the definitions of dynamical properties can be conveniently restated for shift spaces: %in the language of symbolic dynamics.
\begin{enumerate}
  \item A shift space $X$ is \emph{transitive} if for any $u,v\in \lang(X)$ there is $w\in \lang(X)$ such that $uwv\in \lang(X)$;
  \item A shift space $X$ is \emph{totally transitive} if for any $u,v\in \lang(X)$ and any $n>0$ there is $w\in \lang(X)$ such that $uwv\in \lang(X)$ and $n$ divides $|uw|$;
  \item A shift space $X$ is \emph{weakly mixing} if for any $u_1,v_1,u_2,v_2\in \lang(X)$ there are $w_1,w_2\in \lang(X)$ such that $u_1w_1v_1,u_2w_2v_2\in \lang(X)$ and $|u_1w_1|=|u_2w_2|$;
  \item A shift space $X$ is \emph{mixing} if for every $u,v\in \lang(X)$ there is $N>0$ such that for every $n>N$ there is $w\in \lang_n(X)$ such that $uwv\in \lang(X)$.
  \item A shift space $X$ has the \emph{specification property} if there is an integer $N\geq 0$ such that for any $u,v\in \lang(X)$ there is $w\in \lang_N(X)$ such that $uwv\in \lang(X)$.
\item We say that a nondecreasing function $\theta \colon \Zp\to\Zp$ is a mistake function if $\theta(n) \le n$ for all $n$ and $\theta(n)/n \to 0$.
A shift space has the \emph{almost specification property} if there exists a mistake function $\theta$ such that for every $n\in\N$ and $w_1,\ldots,w_n\in\lang(X)$, there exist words $v_1,\ldots,v_n\in\lang(X)$ with $|v_i| = |w_i|$ such that $v_1v_2\ldots v_n\in\lang(X)$ and each
$v_i$ differs from $w_i$ in at most $\theta(|v_i|)$ places.
\item A shift space $X$ has the \emph{weak specification property} if for every $n\in \N$ there exists $t(n)\in\N$ such that $t(n)/n\to 0$ as $n\to\infty$ and any $u,w \in \lang(X)$ with $|w|=n$ and $k\geq t(n)$ there exists a word $v\in \lang_k(X)$ such that $x = uvw \in\lang(X)$.
\item A~shift space $X$ has the \emph{variable specification property} if there exists $N\in\N$ such that for all $u,v\in\lang(X)$, there exists $w\in\lang(X)$ with $uwv\in\lang(X)$ and $|w|\leq N$.
\item A shift space $X$ has the \emph{strong property $P$} if for any $k\ge 2$
  %there is $n$ such that for any $k\geq 2$
 and any words $u_1,\ldots, u_k\in \lang(X)$ with $|u_1|=\ldots=|u_k|$ there is an $n\in\N$ such that for any $N\in\N$ and function
 $\varphi\colon\{1,\ldots,N\}\to\{1,\ldots,k\}$ there are
 words $w_1,\ldots, w_{N-1}\in \lang_n(X)$ such that $u_{\varphi(1)}w_1u_2\ldots u_{\varphi(N-1)}w_{N-1}u_{\varphi(N)}\in \lang(X)$;
\end{enumerate}
Blanchard \cite{B92} defined the strong property $P$, which is also a some form of specification  and proved that it implies uniformly positive entropy and thus weak mixing, and does not imply mixing. A simpler example of this kind is provided in \cite{EKO}. Note that all unilateral symbolic dynamical systems are positively expansive (two-sided shift spaces contained in $\alf^\Z$ are expansive), thus the specification property and the periodic specification property are equivalent.
Also the weak specification property and the periodic weak specification property are equivalent in shift spaces.
Thompson's paper \cite{Thompson12} concludes with an example showing that the almost specification property does not imply its periodic variant even for expansive or positively expansive systems.

The variable specification property was introduced by Jung \cite{Jung11} under the name \emph{almost specification property}. Jung \cite[Lemma 3.7]{Jung11} also proved that every shift space $X$ with the variable specification property is synchronized, and $X$ has the variable specification property if and only if $X$ has the relative specification property with respect to  a regular periodic decomposition. Bertrand-Mathis \cite{Bertrand88} was first to prove that the specification property for shift spaces implies the existence of a~synchronizing word.

Thompson's shift mentioned above does not have periodic points, hence it cannot be coded. We do not know whether there are shift spaces with the weak specification property which are not coded.

Buzzi \cite{Buzzi05} investigated a class of shifts which arose in symbolic coding of several classes of non-uniformly hyperbolic dynamical systems such as entropy-expanding maps. It would be interesting to study the specification-like properties in this setting.

\begin{defn}
Given a~(possibly finite) set $S=\{n_1, n_2,\ldots\}\subset\Zp$ with $n_i\le n_{i+1}$ let $\mathcal{F}_S=\{10^t1\in\{0,1\}^+: t\notin S\}$ and let $X(S)$ be the shift space defined by declaring $\mathcal{F}_S$ as the set of forbidden words. Then $X(S)$ is a synchronized system called an \emph{$S$-gap shift}.
\end{defn}

In \cite[Example 3.4]{Jung11} there is a following characterization of specification properties for an $S$-gap shift $X(S)$, where $S=\{n_1, n_2\ldots\}\subset\Zp$  and $n_i\le n_{i+1}$.

\begin{enumerate}
  \item $X(S)$ has the variable specification property if and only if  $\sup_i\left|n_{i+1}-n_i\right|<\infty$,
  \item $X(S)$ is mixing if and only if $\gcd\{n+1: n\in S\}=1$,
  \item $X(S)$ has the periodic specification property if and only if $\gcd\{n+1: n\in S\}=1$ and $\sup_i\left|n_{i+1}-n_i\right|<\infty$.
\end{enumerate}
Using the above observation it is easy to show that there are shifts spaces with the relative specification property but without specification, and there are synchronized and mixing shift spaces without any form of specification.

A (proper) generalization of sofic shifts was introduced by Kwapisz in \cite{Kwapisz00}. He was motivated by certain computations important for the theory of cohomological Conley index. Let $\alf$ be an alphabet, fix any $m\in \N$ and consider
a family of square matrices with integer entries $\Phi=\set{\Phi_a}_{a\in \alf}$ indexed by $\alf$. If we denote by $0$ the zero matrix, then a \emph{cocyclic subshift} of $\Phi$
is a shift space
\[
X_{\Phi}=\set{x\in \alf^\infty : \Phi_{x_0}\Phi_{x_1}\ldots \Phi_{x_n}\neq 0,  \text{ for all } n\in \Zp}.
\]
We recall them because from the point of view of specification-like properties they behave much like sofic systems.
% and this is so far the largest family of shift spaces with such a behavior.

The following theorem summarizes connections between variants of specification for coded systems.
The main ingredient of the proof are two equivalences: equivalence of weak mixing and mixing, and mixing and specification. Given these two facts the rest is more or less standard. For shifts of finite type it follows mostly from Bowen's work, Weiss noted that it holds for sofic shifts and Kwapisz \cite{Kwapisz00} proved it for cocyclic shifts. %For more general shift spaces it w
%Note at this point that every irreducible sofic shift (and shift of finite type) is a synchronized coded system. (juz jest na gorze)
\begin{thm}[many authors]\label{thm:coded_compilation}
Let $X$ be a non trivial coded system. Then the following conditions
are equivalent:
\begin{enumerate}[(a)]
\item\label{mix} $X$ is topologically mixing;
\item\label{pp} $X$ has the strong property $P$;
\item\label{wm} $X$ is topologically weakly mixing;
\item\label{tt} $X$ is totally transitive.
\end{enumerate}
If $X$ is synchronized, then any of the above conditions is equivalent to
\begin{enumerate}[(a)]\setcounter{enumi}{4}
\item\label{rel-p} $X$ has two periodic points with relatively prime primary periods.
\end{enumerate}
Moreover, there exists a coded system $X$ fulfilling \eqref{mix}--\eqref{tt}, but not \eqref{rel-p}.
If $X$ is cocyclic (in particular, if $X$ is sofic or of finite type), then any of the above conditions is equivalent to
\begin{enumerate}[(a)]\setcounter{enumi}{5}
\item\label{spec} $X$ has the periodic specification property;
\item\label{w-spec} $X$ has the weak periodic specification property;
\item\label{a-spec} $X$ has the almost specification property.
\end{enumerate}
Moreover, there exists a synchronized system $X$ (an $S$-gap shift) fulfilling \eqref{mix}--\eqref{rel-p}, but none of \eqref{spec}-\eqref{a-spec}.
\end{thm}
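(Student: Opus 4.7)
The plan is to prove the theorem in three tiers matching the nested settings (coded, synchronized, cocyclic) and then supply the two separating examples. First I would lay out the trivial or already-cited implications, then concentrate on the direction that requires the coded structure, namely that total transitivity forces mixing.

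For the first equivalence block \eqref{mix}--\eqref{tt} for a coded system $X$, the implications \eqref{pp} $\Rightarrow$ \eqref{wm} $\Rightarrow$ \eqref{tt} are standard: Blanchard's strong property $P$ yields uniform positive entropy and thus weak mixing, and weak mixing always implies total transitivity. For \eqref{mix} $\Rightarrow$ \eqref{pp}, fix a labeled-graph presentation $(G,\Theta)$ and, given words $u_1,\dots,u_k\in\mathcal{L}(X)$ of common length, pick an edge-path representative of each $u_i$ in $G$. Mixing (combined with irreducibility of $G$) lets me find a uniform $n$ such that between any ordered pair of the finitely many end/start vertices of these representatives there is a path of length exactly $n$; concatenating these through arbitrary functions $\varphi\colon\{1,\ldots,N\}\to\{1,\ldots,k\}$ gives strong property $P$. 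The main obstacle is \eqref{tt} $\Rightarrow$ \eqref{mix}: I would argue that if a coded $X$ is transitive but not mixing, the graph presentation induces a nontrivial periodicity (a regular periodic decomposition of $X$ of some period $p\ge 2$), so $(X,\sigma^p)$ has at least two disjoint closed invariant clopen-in-the-decomposition pieces and cannot be transitive, contradicting \eqref{tt}.

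For the synchronized case, to obtain \eqref{mix} $\Rightarrow$ \eqref{rel-p} I take a synchronizing word $v$ and, by mixing, find $n_0$ such that for every $n\ge n_0$ there exists $w_n\in\mathcal{L}_n(X)$ with $vw_nv\in\mathcal{L}(X)$; the synchronizing property then promotes $(vw_n)^\infty$ to a periodic point of period $n+|v|$, and choosing two consecutive values of $n$ gives coprime primary periods. Conversely, from two periodic orbits of coprime primary periods $p,q$ and a synchronizing word $v$, I can splice arbitrary $u,w\in\mathcal{L}(X)$ through $v$ and suitable powers of the two periodic fillings to realize every sufficiently large gap length, producing mixing. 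For the cocyclic block, Kwapisz's theorem gives \eqref{mix} $\Rightarrow$ \eqref{spec} directly, and \eqref{spec} $\Rightarrow$ \eqref{w-spec} and \eqref{spec} $\Rightarrow$ \eqref{a-spec} are immediate from the definitions; the loops close via the already-stated facts that weak specification implies onto $+$ mixing, and that for shift spaces (which are positively expansive) almost specification forces mixing on the whole space, so both properties return to \eqref{mix}.

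The final separating examples come from the $S$-gap characterization recalled just before the statement. Picking $S$ with $\gcd\{n+1:n\in S\}=1$ but $\sup_i(n_{i+1}-n_i)=\infty$ produces a synchronized shift that is mixing, hence satisfies \eqref{mix}--\eqref{rel-p}, but has no variable specification and therefore none of \eqref{spec}--\eqref{a-spec}; this settles the last claim. For the coded-but-not-synchronized example satisfying \eqref{mix}--\eqref{tt} but failing \eqref{rel-p}, I would exhibit a coded shift whose periodic points all share a common primary period (or has no periodic points at all), built by coding a minimal weakly mixing subshift and embedding it inside a larger mixing coded system; the absence of a synchronizing word is what decouples \eqref{rel-p} from the rest. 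The hardest and most paper-specific step overall remains the coded implication \eqref{tt} $\Rightarrow$ \eqref{mix}, since the other technical pieces either appear in Bowen, Weiss, Kwapisz, or follow from the general theorems on shadowing and specification already recorded earlier in the paper.
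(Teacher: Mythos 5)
Your overall architecture (three nested tiers plus two separating examples) is the right one, and it matches the structure the paper implicitly relies on; but note that the paper proves none of this itself --- it defers everything to \cite{EKO} and the facts recorded earlier --- and the places where your sketch is vague are exactly where the real work lives. The most serious gap is the implication from total transitivity to mixing for coded systems, which you yourself identify as the hardest step. You assert that if a coded system is transitive but not mixing, then ``the graph presentation induces a nontrivial periodicity.'' The period of a labeled-graph presentation is not an invariant of $X$ and does not control mixing: the full shift on two symbols is presented by a labeled graph of period $2$ (two vertices, all four edges between them, carrying labels $0$ and $1$ in each direction), yet it is mixing. Because distinct paths can carry the same label, periodicity of $G$ does not descend to a regular periodic decomposition of $X$, so your argument assumes precisely what has to be proved. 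The same confusion between the graph and the language affects your proof of (a)~$\Rightarrow$~(b): mixing of $X$ does not give you paths of a prescribed exact length between two given vertices of $G$; it only gives connecting \emph{words}, and knowing $u_iwu_j\in\lang(X)$ for each consecutive pair does not let you concatenate $u_{\varphi(1)}w_1u_{\varphi(2)}w_2\cdots$ into a single admissible word. Realizing the connections as genuine paths, so that concatenation is automatic, is the content of the argument in \cite{EKO}, and it is not free.

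Two further steps are wrong as stated. First, you close the cocyclic loop by claiming that for shift spaces ``almost specification forces mixing on the whole space''; the paper asserts the opposite (almost specification implies no recurrence property at all, only weak mixing of the restriction to the measure center), and positive expansiveness is irrelevant here. The correct route is that a coded system has dense periodic points, hence its measure center is all of $X$; only then does almost specification give weak mixing of $X$, which the first block upgrades to mixing. Second, in the final $S$-gap example the inference ``no variable specification, therefore none of (f)--(h)'' is a non sequitur: neither weak nor almost specification implies variable specification, and $S$-gap shifts whose gaps $n_{i+1}-n_i$ are unbounded but grow sublinearly in fact \emph{do} have weak specification. You need an $S$ whose gaps grow at least linearly, together with separate arguments ruling out weak and almost specification. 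A minor symptom of the same issue appears in your coded-but-not-(e) example: a coded system ``with no periodic points at all'' cannot exist, since periodic points are dense in every coded system, so that fallback must be discarded and the construction must instead force all primary periods to share a common divisor while preserving mixing.
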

\begin{proof}
It follows from \cite{EKO} and the results mentioned above.
\end{proof}

\bibliographystyle{amsplain}
\bibliography{bibliography}

\def\cprime{$'$} \def\ocirc#1{\ifmmode\setbox0=\hbox{$#1$}\dimen0=\ht0
  \advance\dimen0 by1pt\rlap{\hbox to\wd0{\hss\raise\dimen0
  \hbox{\hskip.2em$\scriptscriptstyle\circ$}\hss}}#1\else {\accent"17 #1}\fi}
  \def\ocirc#1{\ifmmode\setbox0=\hbox{$#1$}\dimen0=\ht0 \advance\dimen0
  by1pt\rlap{\hbox to\wd0{\hss\raise\dimen0
  \hbox{\hskip.2em$\scriptscriptstyle\circ$}\hss}}#1\else {\accent"17 #1}\fi}
\providecommand{\bysame}{\leavevmode\hbox to3em{\hrulefill}\thinspace}
\providecommand{\MR}{\relax\ifhmode\unskip\space\fi MR }
% \MRhref is called by the amsart/book/proc definition of \MR.
\providecommand{\MRhref}[2]{%
  \href{http://www.ams.org/mathscinet-getitem?mr=#1}{#2}
}
\providecommand{\href}[2]{#2}
\begin{thebibliography}{10}

\bibitem{ABC11}
Flavio Abdenur, Christian Bonatti, and Sylvain Crovisier, \emph{Nonuniform
  hyperbolicity for {$C\sp 1$}-generic diffeomorphisms}, Israel J. Math.
  \textbf{183} (2011), 1--60. \MR{2811152 (2012m:37046)}

\bibitem{ARR}
Ll. Alsed{\`a}, M.~A. del R{\'{\i}}o, and J.~A. Rodr{\'{\i}}guez,
  \emph{Transitivity and dense periodicity for graph maps}, J. Difference Equ.
  Appl. \textbf{9} (2003), no.~6, 577--598. \MR{1978125 (2004d:37061)}

\bibitem{AH}
N.~Aoki and K.~Hiraide, \emph{Topological theory of dynamical systems},
  North-Holland Mathematical Library, vol.~52, North-Holland Publishing Co.,
  Amsterdam, 1994, Recent advances. \MR{1289410 (95m:58095)}

\bibitem{Banks97}
J.~Banks, \emph{Regular periodic decompositions for topologically transitive
  maps}, Ergodic Theory Dynam. Systems \textbf{17} (1997), no.~3, 505--529.
  \MR{1452178 (98d:54074)}

\bibitem{BS}
W.~Bauer and K.~Sigmund, \emph{Topological dynamics of transformations induced
  on the space of probability measures}, Monatsh. Math. \textbf{79} (1975),
  81--92. \MR{0370540 (51 \#6767)}

\bibitem{Bertrand88}
A.~Bertrand, \emph{Specification, synchronisation, average length}, Coding
  theory and applications ({C}achan, 1986), Lecture Notes in Comput. Sci., vol.
  311, Springer, Berlin, 1988, pp.~86--95. \MR{960710 (89i:94022)}

\bibitem{Blanchard89}
F.~Blanchard, \emph{{$\beta$}-expansions and symbolic dynamics}, Theoret.
  Comput. Sci. \textbf{65} (1989), no.~2, 131--141. \MR{1020481 (90j:54039)}

\bibitem{B92}
\bysame, \emph{Fully positive topological entropy and topological mixing},
  Symbolic dynamics and its applications ({N}ew {H}aven, {CT}, 1991), Contemp.
  Math., vol. 135, Amer. Math. Soc., Providence, RI, 1992, pp.~95--105.
  \MR{1185082 (93k:58134)}

\bibitem{BH}
F.~Blanchard and G.~Hansel, \emph{Syst\`emes cod\'es}, Theoret. Comput. Sci.
  \textbf{44} (1986), no.~1, 17--49. \MR{858689 (88m:68029)}

\bibitem{Blank88}
M.~L. Blank, \emph{Metric properties of {$\epsilon$}-trajectories of dynamical
  systems with stochastic behaviour}, Ergodic Theory Dynam. Systems \textbf{8}
  (1988), no.~3, 365--378. \MR{961736 (90b:58143)}

\bibitem{Blokh83}
A.~M. Blokh, \emph{Decomposition of dynamical systems on an interval}, Uspekhi
  Mat. Nauk \textbf{38} (1983), no.~5(233), 179--180. \MR{718829 (86d:54060)}

\bibitem{Blokh84}
\bysame, \emph{On transitive mappings of one-dimensional branched manifolds},
  Differential-difference equations and problems of mathematical physics
  ({R}ussian), Akad. Nauk Ukrain. SSR, Inst. Mat., Kiev, 1984, pp.~3--9, 131.
  \MR{884346 (88b:58111)}

\bibitem{Blokh87}
\bysame, \emph{The connection between entropy and transitivity for
  one-dimensional mappings}, Uspekhi Mat. Nauk \textbf{42} (1987), no.~5(257),
  209--210. \MR{928783 (89g:58117)}

\bibitem{Blokh95}
\bysame, \emph{The ``spectral'' decomposition for one-dimensional maps},
  Dynamics reported, Dynam. Report. Expositions Dynam. Systems (N.S.), vol.~4,
  Springer, Berlin, 1995, pp.~1--59. \MR{1346496 (96e:58087)}

\bibitem{Bowen71}
R.~Bowen, \emph{Periodic points and measures for {A}xiom {$A$}
  diffeomorphisms}, Trans. Amer. Math. Soc. \textbf{154} (1971), 377--397.
  \MR{0282372 (43 \#8084)}

\bibitem{Bowen74}
\bysame, \emph{Some systems with unique equilibrium states}, Math. Systems
  Theory \textbf{8} (1974/75), no.~3, 193--202. \MR{0399413 (53 \#3257)}

\bibitem{Buzzi97}
J.~Buzzi, \emph{Specification on the interval}, Trans. Amer. Math. Soc.
  \textbf{349} (1997), no.~7, 2737--2754. \MR{1407484 (97i:58043)}

\bibitem{Buzzi05}
J{\'e}r{\^o}me Buzzi, \emph{Subshifts of quasi-finite type}, Invent. Math.
  \textbf{159} (2005), no.~2, 369--406. \MR{2116278 (2005i:37013)}

\bibitem{ChenLi}
Liang Chen and Shi~Hai Li, \emph{Shadowing property for inverse limit spaces},
  Proc. Amer. Math. Soc. \textbf{115} (1992), no.~2, 573--580. \MR{1097338
  (92i:58094)}

\bibitem{CT12}
V.~Climenhaga and D.~J. Thompson, \emph{Intrinsic ergodicity beyond
  specification: {$\beta$}-shifts, {$S$}-gap shifts, and their factors}, Israel
  J. Math. \textbf{192} (2012), no.~2, 785--817. \MR{3009742}

\bibitem{Comman09}
H.~Comman, \emph{Strengthened large deviations for rational maps and full
  shifts, with unified proof}, Nonlinearity \textbf{22} (2009), no.~6,
  1413--1429. \MR{2507327 (2010k:37002)}

\bibitem{CS}
Y.~Coudene and B.~Schapira, \emph{Generic measures for hyperbolic flows on
  non-compact spaces}, Israel J. Math. \textbf{179} (2010), 157--172.
  \MR{2735038 (2012g:37050)}

\bibitem{Croom78}
F.H. Croom, \emph{Basic concepts of algebraic topology}, Springer-Verlag, New
  York-Heidelberg, 1978, Undergraduate Texts in Mathematics. \MR{0478127 (57
  \#17616)}

\bibitem{Dateyama91}
M.~Dateyama, \emph{The almost weak specification property for ergodic group
  automorphisms of abelian groups}, J. Math. Soc. Japan \textbf{42} (1990),
  no.~2, 341--351. \MR{1041229 (91b:28014)}

\bibitem{Dateyama82}
Masahito Dateyama, \emph{Invariant measures for homeomorphisms with almost weak
  specification}, Probability theory and mathematical statistics ({T}bilisi,
  1982), Lecture Notes in Math., vol. 1021, Springer, Berlin, 1983, pp.~93--96.
  \MR{735976}

\bibitem{Vries}
J.~de~Vries, \emph{Elements of topological dynamics}, Mathematics and its
  Applications, vol. 257, Kluwer Academic Publishers Group, Dordrecht, 1993.
  \MR{1249063 (94m:54098)}

\bibitem{DGS}
M.~Denker, Ch. Grillenberger, and K.~Sigmund, \emph{Ergodic theory on compact
  spaces}, Lecture Notes in Mathematics, Vol. 527, Springer-Verlag, Berlin-New
  York, 1976. \MR{0457675 (56 \#15879)}

\bibitem{EKW}
A.~Eizenberg, Y.~Kifer, and B.~Weiss, \emph{Large deviations for {${\bf
  Z}^d$}-actions}, Comm. Math. Phys. \textbf{164} (1994), no.~3, 433--454.
  \MR{1291239 (95k:60071)}

\bibitem{EKO}
J.~{Epperlein}, D.~{Kwietniak}, and P.~{Oprocha}, \emph{{Mixing properties in
  coded systems}}, preprint (2015), available at
  \texttt{http://www.arxiv.org/abs/1503.02838}, March 2015.

\bibitem{EKL}
C.~Ercai, T.~K{\"u}pper, and S.~Lin, \emph{Topological entropy for divergence
  points}, Ergodic Theory Dynam. Systems \textbf{25} (2005), no.~4, 1173--1208.
  \MR{2158401 (2006c:37014)}

\bibitem{FPL08}
Aihua Fan, Lingmin Liao, and Jacques Peyri{\`e}re, \emph{Generic points in
  systems of specification and {B}anach valued {B}irkhoff ergodic average},
  Discrete Contin. Dyn. Syst. \textbf{21} (2008), no.~4, 1103--1128.
  \MR{2399452 (2009b:37029)}

\bibitem{FF}
Doris Fiebig and Ulf-Rainer Fiebig, \emph{Covers for coded systems}, Symbolic
  dynamics and its applications ({N}ew {H}aven, {CT}, 1991), Contemp. Math.,
  vol. 135, Amer. Math. Soc., Providence, RI, 1992, pp.~139--179. \MR{1185086
  (93m:54068)}

\bibitem{FO}
H.~F{\"o}llmer and S.~Orey, \emph{Large deviations for the empirical field of a
  {G}ibbs measure}, Ann. Probab. \textbf{16} (1988), no.~3, 961--977.
  \MR{942749 (89i:60063)}

\bibitem{GK}
K.~{Gelfert} and D.~{Kwietniak}, \emph{{The (Poulsen) simplex of invariant
  measures}}, preprint (2014), available at
  \texttt{http://www.arxiv.org/abs/1411.2077}, April 2014.

\bibitem{Gu07}
R.~Gu, \emph{The asymptotic average shadowing property and transitivity},
  Nonlinear Anal. \textbf{67} (2007), no.~6, 1680--1689. \MR{2326020
  (2008i:37031)}

\bibitem{Halmos43}
P.~R. Halmos, \emph{On automorphisms of compact groups}, Bull. Amer. Math. Soc.
  \textbf{49} (1943), 619--624. \MR{0008647 (5,40c)}

\bibitem{HKO}
G.~Hara{\'n}czyk, D.~Kwietniak, and P.~Oprocha, \emph{Topological structure and
  entropy of mixing graph maps}, Ergodic Theory Dynam. Systems \textbf{34}
  (2014), no.~5, 1587--1614. \MR{3255434}

\bibitem{Hilmy36}
H.~Hilmy, \emph{Sur les centres d'attraction minimaux des syst\`emes
  dynamiques}, Compositio Math. \textbf{3} (1936), 227--238. \MR{1556941}

\bibitem{Hofbauer87}
F.~Hofbauer, \emph{Generic properties of invariant measures for simple
  piecewise monotonic transformations}, Israel J. Math. \textbf{59} (1987),
  no.~1, 64--80. \MR{923662 (89c:28020)}

\bibitem{Hofbauer88}
\bysame, \emph{Generic properties of invariant measures for continuous
  piecewise monotonic transformations}, Monatsh. Math. \textbf{106} (1988),
  no.~4, 301--312. \MR{973140 (90b:28018)}

\bibitem{HR}
F.~Hofbauer and P.~Raith, \emph{Density of periodic orbit measures for
  transformations on the interval with two monotonic pieces}, Fund. Math.
  \textbf{157} (1998), no.~2-3, 221--234, Dedicated to the memory of Wies{\l}aw
  Szlenk. \MR{1636889 (99f:58125)}

\bibitem{HSY05}
Wen Huang, Song Shao, and Xiangdong Ye, \emph{Mixing via sequence entropy},
  Algebraic and topological dynamics, Contemp. Math., vol. 385, Amer. Math.
  Soc., Providence, RI, 2005, pp.~101--122. \MR{2180232 (2006h:37008)}

\bibitem{HY06}
Wen Huang and Xiangdong Ye, \emph{A local variational relation and
  applications}, Israel J. Math. \textbf{151} (2006), 237--279. \MR{2214126
  (2006k:37033)}

\bibitem{Jung11}
U.~Jung, \emph{On the existence of open and bi-continuing codes}, Trans. Amer.
  Math. Soc. \textbf{363} (2011), no.~3, 1399--1417. \MR{2737270 (2012b:37035)}

\bibitem{KH}
A.~Katok and B.~Hasselblatt, \emph{Introduction to the modern theory of
  dynamical systems}, Encyclopedia of Mathematics and its Applications,
  vol.~54, Cambridge University Press, Cambridge, 1995, With a supplementary
  chapter by Katok and Leonardo Mendoza. \MR{1326374 (96c:58055)}

\bibitem{KKO}
M.~Kulczycki, D.~Kwietniak, and P.~Oprocha, \emph{On almost specification and
  average shadowing properties}, Fund. Math. \textbf{224} (2014), no.~3,
  241--278. \MR{3194417}

\bibitem{KO10}
Marcin Kulczycki and Piotr Oprocha, \emph{Exploring the asymptotic average
  shadowing property}, J. Difference Equ. Appl. \textbf{16} (2010), no.~10,
  1131--1140. \MR{2738888 (2012b:37049)}

\bibitem{Kurka03}
P.~K{\ocirc{u}}rka, \emph{Local return rates in substitutive subshifts}, Acta
  Univ. Carolin. Math. Phys. \textbf{44} (2003), no.~2, 29--42. \MR{2101899
  (2005h:37028)}

\bibitem{Kwapisz00}
J.~Kwapisz, \emph{Cocyclic subshifts}, Math. Z. \textbf{234} (2000), no.~2,
  255--290. \MR{1765882 (2001j:37025)}

\bibitem{KOR}
D.~{Kwietniak}, P.~{Oprocha}, and M.~{Rams}, \emph{On entropy of dynamical
  systems with almost specification}, preprint (2014), available at
  \texttt{http://www.arxiv.org/abs/1411.1989}.

\bibitem{KO12}
Dominik Kwietniak and Piotr Oprocha, \emph{A note on the average shadowing
  property for expansive maps}, Topology Appl. \textbf{159} (2012), no.~1,
  19--27. \MR{2852944 (2012j:37036)}

\bibitem{LO13}
J.~Li and P.~Oprocha, \emph{Shadowing property, weak mixing and regular
  recurrence}, J. Dynam. Differential Equations \textbf{25} (2013), no.~4,
  1233--1249. \MR{3138162}

\bibitem{Lind79}
D.~A. Lind, \emph{Ergodic group automorphisms and specification}, Ergodic
  theory ({P}roc. {C}onf., {M}ath. {F}orschungsinst., {O}berwolfach, 1978),
  Lecture Notes in Math., vol. 729, Springer, Berlin, 1979, pp.~93--104.
  \MR{550414 (80j:28024)}

\bibitem{Lind82}
\bysame, \emph{Dynamical properties of quasihyperbolic toral automorphisms},
  Ergodic Theory Dynamical Systems \textbf{2} (1982), no.~1, 49--68. \MR{684244
  (84g:28017)}

\bibitem{LT78}
D.~A. Lind and J.-P. Thouvenot, \emph{Measure-preserving homeomorphisms of the
  torus represent all finite entropy ergodic transformations}, Math. Systems
  Theory \textbf{11} (1977/78), no.~3, 275--282. \MR{0584588 (58 \#28434)}

\bibitem{LM}
Douglas Lind and Brian Marcus, \emph{An introduction to symbolic dynamics and
  coding}, Cambridge University Press, Cambridge, 1995. \MR{1369092
  (97a:58050)}

\bibitem{LOS}
J.~Lindenstrauss, G.~Olsen, and Y.~Sternfeld, \emph{The {P}oulsen simplex},
  Ann. Inst. Fourier (Grenoble) \textbf{28} (1978), no.~1, vi, 91--114.
  \MR{500918 (80b:46019a)}

\bibitem{Marcus80}
B.~Marcus, \emph{A note on periodic points for ergodic toral automorphisms},
  Monatsh. Math. \textbf{89} (1980), no.~2, 121--129. \MR{572888 (81f:28016)}

\bibitem{Mazur00}
Marcin Mazur, \emph{Topological transitivity of the chain recurrent set implies
  topological transitivity of the whole space}, Univ. Iagel. Acta Math. (2000),
  no.~38, 219--226. \MR{1812116 (2001m:54043)}

\bibitem{Moothathu11}
T.~K.~Subrahmonian Moothathu, \emph{Implications of pseudo-orbit tracing
  property for continuous maps on compacta}, Topology Appl. \textbf{158}
  (2011), no.~16, 2232--2239. \MR{2831912 (2012h:37032)}

\bibitem{Oliveira12}
Krerley Oliveira, \emph{Every expanding measure has the nonuniform
  specification property}, Proc. Amer. Math. Soc. \textbf{140} (2012), no.~4,
  1309--1320. \MR{2869114 (2012k:37065)}

\bibitem{KT13}
Krerley Oliveira and Xueting Tian, \emph{Non-uniform hyperbolicity and
  non-uniform specification}, Trans. Amer. Math. Soc. \textbf{365} (2013),
  no.~8, 4371--4392. \MR{3055699}

\bibitem{Olsen03b}
L.~Olsen, \emph{Multifractal analysis of divergence points of deformed measure
  theoretical {B}irkhoff averages}, J. Math. Pures Appl. (9) \textbf{82}
  (2003), no.~12, 1591--1649. \MR{2025314 (2004k:37036)}

\bibitem{OW03}
L.~Olsen and S.~Winter, \emph{Normal and non-normal points of self-similar sets
  and divergence points of self-similar measures}, J. London Math. Soc. (2)
  \textbf{67} (2003), no.~1, 103--122. \MR{1942414 (2003i:28009)}

\bibitem{Parry60}
W.~Parry, \emph{On the {$\beta $}-expansions of real numbers}, Acta Math. Acad.
  Sci. Hungar. \textbf{11} (1960), 401--416. \MR{0142719 (26 \#288)}

\bibitem{Parthasarathy61}
K.~R. Parthasarathy, \emph{On the category of ergodic measures}, Illinois J.
  Math. \textbf{5} (1961), 648--656. \MR{0148850 (26 \#6354)}

\bibitem{Pavlov14}
R.~Pavlov, \emph{On intrinsic ergodicity and weakenings of the specification
  property}, preprint (2014), available at
  \texttt{http://www.arxiv.org/abs/1411.2077}.

\bibitem{Pesin}
Yakov~B. Pesin, \emph{Dimension theory in dynamical systems}, Chicago Lectures
  in Mathematics, University of Chicago Press, Chicago, IL, 1997, Contemporary
  views and applications. \MR{1489237 (99b:58003)}

\bibitem{Pfister02}
C-E Pfister, \emph{Thermodynamical aspects of classical lattice systems}, In
  and out of equilibrium ({M}ambucaba, 2000), Progr. Probab., vol.~51,
  Birkh\"auser Boston, Boston, MA, 2002, pp.~393--472. \MR{1901964
  (2004d:60013)}

\bibitem{PS05}
C.-E. Pfister and W.~G. Sullivan, \emph{Large deviations estimates for
  dynamical systems without the specification property. {A}pplications to the
  {$\beta$}-shifts}, Nonlinearity \textbf{18} (2005), no.~1, 237--261.
  \MR{2109476 (2005h:37015)}

\bibitem{PS07}
\bysame, \emph{On the topological entropy of saturated sets}, Ergodic Theory
  Dynam. Systems \textbf{27} (2007), no.~3, 929--956. \MR{2322186
  (2008f:37036)}

\bibitem{Phelps01}
R.R. Phelps, \emph{Lectures on {C}hoquet's theorem}, second ed., Lecture Notes
  in Mathematics, vol. 1757, Springer-Verlag, Berlin, 2001. \MR{1835574
  (2002k:46001)}

\bibitem{QS}
A.~{Quas} and T.~{Soo}, \emph{{Ergodic universality of some topological
  dynamical systems}}, preprint (2012), available at
  \texttt{http://www.arxiv.org/abs/1208.3501}, August 2012.

\bibitem{R}
A.~R{\'e}nyi, \emph{Representations for real numbers and their ergodic
  properties}, Acta Math. Acad. Sci. Hungar \textbf{8} (1957), 477--493.
  \MR{0097374 (20 \#3843)}

\bibitem{Richeson}
David Richeson and Jim Wiseman, \emph{Positively expansive homeomorphisms of
  compact spaces}, Int. J. Math. Math. Sci. (2004), no.~53-56, 2907--2910.
  \MR{2145368 (2005m:37024)}

\bibitem{RS01}
E.~Arthur Robinson, Jr. and Ay{\c{s}}e~A. {\c{S}}ahin, \emph{Modeling ergodic,
  measure preserving actions on {$\mathbb{Z}\sp d$} shifts of finite type},
  Monatsh. Math. \textbf{132} (2001), no.~3, 237--253. \MR{1844076
  (2002e:37005)}

\bibitem{Sakai03}
Kazuhiro Sakai, \emph{Various shadowing properties for positively expansive
  maps}, Topology Appl. \textbf{131} (2003), no.~1, 15--31. \MR{1982811
  (2004d:37027)}

\bibitem{Schmeling97}
J{\"o}rg Schmeling, \emph{Symbolic dynamics for {$\beta$}-shifts and
  self-normal numbers}, Ergodic Theory Dynam. Systems \textbf{17} (1997),
  no.~3, 675--694. \MR{1452189 (98c:11080)}

\bibitem{TShi}
Takashi Shimomura, \emph{Chain recurrence and {P}.{O}.{T}.{P}}, Dynamical
  systems and singular phenomena ({K}yoto, 1986), World Sci. Adv. Ser. Dynam.
  Systems, vol.~2, World Sci. Publishing, Singapore, 1987, pp.~224--241.
  \MR{894966 (88i:54032)}

\bibitem{Sigmund70}
K.~Sigmund, \emph{Generic properties of invariant measures for {A}xiom {${\rm
  A}$}\ diffeomorphisms}, Invent. Math. \textbf{11} (1970), 99--109.
  \MR{0286135 (44 \#3349)}

\bibitem{Sigmund74}
\bysame, \emph{On dynamical systems with the specification property}, Trans.
  Amer. Math. Soc. \textbf{190} (1974), 285--299. \MR{0352411 (50 \#4898)}

\bibitem{Sigmund77}
\bysame, \emph{On minimal centers of attraction and generic points}, J. Reine
  Angew. Math. \textbf{295} (1977), 72--79. \MR{0482710 (58 \#2766)}

\bibitem{Smale67}
S.~Smale, \emph{Differentiable dynamical systems}, Bull. Amer. Math. Soc.
  \textbf{73} (1967), 747--817. \MR{0228014 (37 \#3598)}

\bibitem{Strassen65}
V.~Strassen, \emph{The existence of probability measures with given marginals},
  Ann. Math. Statist. \textbf{36} (1965), 423--439. \MR{0177430 (31 \#1693)}

\bibitem{ST12}
Wen~Xiang Sun and Xue~Ting Tian, \emph{The structure on invariant measures of
  {$C^1$} generic diffeomorphisms}, Acta Math. Sin. (Engl. Ser.) \textbf{28}
  (2012), no.~4, 817--824. \MR{2892488}

\bibitem{TV}
F.~Takens and E.~Verbitskiy, \emph{On the variational principle for the
  topological entropy of certain non-compact sets}, Ergodic Theory Dynam.
  Systems \textbf{23} (2003), no.~1, 317--348. \MR{1971209 (2004b:37041)}

\bibitem{Thompson12}
Daniel~J. Thompson, \emph{Irregular sets, the {$\beta$}-transformation and the
  almost specification property}, Trans. Amer. Math. Soc. \textbf{364} (2012),
  no.~10, 5395--5414. \MR{2931333}

\bibitem{Thomsen05}
Klaus Thomsen, \emph{On the structure of beta shifts}, Algebraic and
  topological dynamics, Contemp. Math., vol. 385, Amer. Math. Soc., Providence,
  RI, 2005, pp.~321--332. \MR{2180243 (2006f:37012)}

\bibitem{Thomsen}
\bysame, \emph{On the ergodic theory of synchronized systems}, Ergodic Theory
  Dynam. Systems \textbf{26} (2006), no.~4, 1235--1256. \MR{2247640
  (2009a:37011)}

\bibitem{Varandas12}
P.~Varandas, \emph{Non-uniform specification and large deviations for weak
  {G}ibbs measures}, J. Stat. Phys. \textbf{146} (2012), no.~2, 330--358.
  \MR{2873016}

\bibitem{Walters}
P.~Walters, \emph{An introduction to ergodic theory}, Graduate Texts in
  Mathematics, vol.~79, Springer-Verlag, New York-Berlin, 1982. \MR{648108
  (84e:28017)}

\bibitem{WaltersSFT}
Peter Walters, \emph{On the pseudo-orbit tracing property and its relationship
  to stability}, The structure of attractors in dynamical systems ({P}roc.
  {C}onf., {N}orth {D}akota {S}tate {U}niv., {F}argo, {N}.{D}., 1977), Lecture
  Notes in Math., vol. 668, Springer, Berlin, 1978, pp.~231--244. \MR{518563
  (80d:58055)}

\bibitem{Weiss73}
B.~Weiss, \emph{Subshifts of finite type and sofic systems}, Monatsh. Math.
  \textbf{77} (1973), 462--474. \MR{0340556 (49 \#5308)}

\bibitem{WOC}
X.~Wu, P.~Oprocha, and G.~Chen, \emph{"on various definitions of shadowing with
  average error in tracing}, Topology Appl. (2015), 1--30, preprint (2015),
  available at \texttt{http://www.arxiv.org/abs/1406.5822}.

\bibitem{Yamamoto09}
K.~Yamamoto, \emph{On the weaker forms of the specification property and their
  applications}, Proc. Amer. Math. Soc. \textbf{137} (2009), no.~11,
  3807--3814. \MR{2529890 (2010f:37027)}

\bibitem{Ye95}
Xiang~Dong Ye, \emph{Topological entropy of the induced maps of the inverse
  limits with bonding maps}, Topology Appl. \textbf{67} (1995), no.~2,
  113--118. \MR{1362077 (97j:54046)}

\end{thebibliography}
\end{document}